\newcommand{\N}{\mathbb{N}}
\newcommand{\Z}{\mathbb{Z}}
\newcommand{\R}{\mathbb{R}}
\newcommand{\F}{\mathbb{F}}
\newcommand{\C}{\mathbb{C}}
\renewcommand{\P}{\mathbb{P}}
\newcommand{\x}{\mathbf{x}}
\theoremstyle{plain}
\newtheorem{thm}{Theorem}[section]
\newtheorem{prop}[thm]{Proposition}
\newtheorem{cor}[thm]{Corollary}
\newtheorem{lem}[thm]{Lemma}
\newtheorem{question}[thm]{Question}
\newtheorem*{claim*}{Claim}
\newtheorem*{thm*}{Theorem}
\newtheoremstyle{case}{}{}{}{}{}{:}{ }{}
\theoremstyle{case}
\theoremstyle{definition}
\newtheorem{example}[thm]{Example}
\newtheorem{remark}[thm]{Remark}
\DeclareMathOperator{\Gr}{Gr}
\DeclareMathOperator{\SL}{SL}
\DeclareMathOperator{\diag}{diag}
 \def\dashmapsto{\mapstochar\dashrightarrow}
\begin{document}
\nocite{*}
\title[Characterizing principal minors via determinantal polynomials]{
Characterizing principal minors of symmetric matrices via determinantal multiaffine polynomials
}

\author{Abeer Al Ahmadieh}
\address{Department of Mathematics, University of Washington, Seattle, WA, USA 98195} 
\email{aka2222@uw.edu}

 \author{Cynthia Vinzant}
\address{Department of Mathematics, North Carolina State University, Raleigh, NC, USA 27695}
\email{clvinzan@ncsu.edu}

\begin{abstract}
Here we consider the image of the principal minor map 
of symmetric matrices over an arbitrary unique factorization domain $R$. 
By exploiting a connection with symmetric determinantal representations, 
we characterize the image of the principal minor map 
through the condition that certain polynomials coming from so-called 
Rayleigh differences are squares in the polynomial ring over $R$.
In almost all cases, one can characterize the image of the principal minor map using the orbit of Cayley's hyperdeterminant under 
the action of $(\SL_2(R))^{n} \rtimes S_{n}$.
Over $\C$, this recovers a characterization of Oeding from 2011, and over $\R$, 
the orbit of a single additional quadratic inequality suffices to cut out the image.
Applications to other symmetric determinantal representations are also discussed.
\end{abstract}

\maketitle

\section{Introduction}
Given an $n\times n$ matrix $A$ with entries in a commutative ring $R$, let $A_S$ denote the principal minor obtained by taking the 
determinant of the principal submatrix of $A$ with rows and columns indexed by $S$. 
Let ${\rm Sym}_n(R)$ denote the space of symmetric $n\times n$ matrices with entries in $R$.  
The {\bf principal minor map} is the map 
\[
\varphi: {\rm Sym}_n(R) \to R^{2^n} \ \ \text{ given by } \ \ \varphi(A) = (A_S)_{S\subseteq [n]},
\]
where we take $A_{\emptyset} = 1$. 
Here we seek to characterize the image of the principal minor map over arbitrary unique factorization domains $R$, and in particular, arbitrary  fields.  

Over $\R$ and $\C$, this problem was studied by Holtz and Sturmfels \cite{HoltzSturmfels07}, who show that the image is invariant under an action of 
$\SL_2(\mathbb{R})^n \rtimes S_n$ and conjectured that the vanishing of 
polynomials in the orbit of the \emph{hyperdeterminant} under this group cuts out  
the image of the principal minor map over $\C$.  
This conjecture was resolved by Oeding in 2011, 
using tools from representation theory \cite{Oeding11}. 
Here we generalize this result to hold over arbitrary unique factorization domains, except those with exactly three elements.

We will study this problem by associating to the matrix $A$ the multiaffine polynomial
\[
f_A = \det\left({\rm diag}(x_1, \hdots, x_n) + A \right) = \sum_{S\subseteq [n]} A_S \prod_{i\not\in S}x_i. 
\]
This translates the problem of characterizing the image of the principal minor map 
to the problem of characterizing multiaffine polynomials in $R[x_1, \hdots, x_n]$
with symmetric determinantal representations.  Key to this characterization will 
be Rayleigh differences. 

The Rayleigh difference of a polynomial $f$ with respect to $i,j\in [n]$ is defined to be 
\begin{equation}\label{eq:DeltaDef}
\Delta_{ij}(f)  = \frac{\partial f}{\partial x_i}  \frac{\partial f}{\partial x_j} - f \frac{\partial^2 f}{\partial x_i \partial x_j}.
\end{equation}
These polynomials play a prominent role in the theory of stable polynomials \cite{Branden07}.  Using Dodgson condensation \cite{Dodgson}, one can see that for the determinantal polynomial $f_A$, all Rayleigh differences 
$\Delta_{ij}(f_A)$ are squares in the polynomial ring $R[x_1, \hdots, x_n]$. 
In 2015, Kummer, Plaumann and the second author prove the converse over $\R$ \cite{KPV15} 
and here we prove it over an arbitrary unique factorization domain. 

Formally, to ${\bf a} = (a_S)_{S\subseteq [n]}$ in $R^{2^n}$ we associate the polynomial 
$f_{\bf a} = \sum_{S\subseteq [n]}a_S \prod_{i\not\in S}x_i.$
\newtheorem*{thm:DeltaMinors}{Theorem~\ref{thm:DeltaMinors}}
\begin{thm:DeltaMinors}
Let $R$ be a unique factorization domain. 
An element ${\bf a} = (a_S)_{S\subseteq [n]}$ in $R^{2^n}$ is in the image of ${\rm Sym}_n(R)$ under
the principal minor map if and only if $a_{\emptyset}=1$ and for every $i,j\in [n]$,
$\Delta_{ij}(f_{\bf a})$ is a square in $R[x_1, \hdots, x_n]$. 
\end{thm:DeltaMinors}

For $n=3$, $\Delta_{12}(f_{\bf a})$ is a quadratic polynomial in the remaining variable $x_3$, namely
\[
\Delta_{12}(f_{\bf a}) = (a_1a_2 - a_{\emptyset} a_{12})x_3^2 + ( a_{1} a_{23} +a_{2} a_{13}-a_{3} a_{12} - a_{\emptyset} a_{123})x_3 + (a_{13}a_{23} - a_3a_{123}).
\]
For this polynomial to be a square, its discriminant must vanish, giving us a necessary 
equation on the coefficients of $f_{\bf a}$. 
The discriminant of $\Delta_{12}(f_{\bf a})$ with respect to $x_3$ equals the well-known Cayley $2\times 2 \times 2$ hyperdeterminant, 
\begin{align*}
{\rm HypDet}({\bf a}) 
=&  \ ( a_{1} a_{23} +a_{2} a_{13}-a_{3} a_{12} - a_{\emptyset} a_{123})^2 -  4(a_1a_2 - a_{\emptyset} a_{12}) (a_{13}a_{23} - a_3a_{123}) \\ 
=& \ a_{\emptyset}^2 a_{123}^2 +  a_{1}^2 a_{23}^2 +a_{2}^2 a_{13}^2+a_{3}^2 a_{12}^2  - 2 a_{\emptyset} a_{1} a_{23} a_{123} - 2 a_{\emptyset} a_{2} a_{13} a_{123} -2 a_{\emptyset} a_{3} a_{12} a_{123}\\
 &-2 a_{1} a_{2} a_{13} a_{23} - 2 a_{1} a_{3} a_{12} a_{23} -2 a_{2} a_{3} a_{12} a_{13} + 4 a_{\emptyset} a_{23} a_{13} a_{12}+ 4 a_{123} a_{1} a_{2} a_{3}.
 \end{align*}
The coefficients of $1$ and $x_3^2$ in $\Delta_{12}(f_{\bf a})$ are $a_{13}a_{23} - a_3a_{123}$ and $a_1a_2 - a_{\emptyset} a_{12}$, respectively. We see that 
$\Delta_{12}(f_{\bf a})$ is a square if and only if these two coefficients are squares in $R$ 
and the discriminant, ${\rm HypDet}({\bf a}) $, is zero. 
One can check that ${\rm Discr}_{x_3}\Delta_{12}(f_{\bf a})$, 
${\rm Discr}_{x_2}\Delta_{13}(f_{\bf a})$ and ${\rm Discr}_{x_1}\Delta_{23}(f_{\bf a})$ 
are all the same and equal to ${\rm HypDet}({\bf a})$. 
Therefore a vector ${\bf a}\in R^{2^3}$ with $a_{\emptyset}=1$ is in the image of the principal minor map if and only if ${\rm HypDet}({\bf a})=0$ and for every $i,j\in [3]$ with 
$\{k\} = [3]\backslash \{i,j\}$, 
both $a_{ik}a_{jk} - a_ka_{ijk}$ and $a_ia_j - a_{\emptyset} a_{ij}$ are squares in $R$. 

Our main result is that, under the action of $\SL_2(R)^n \rtimes S_n$, these conditions 
characterize the image of the principal minor map for general $n$. 

\newtheorem*{thm:HypDet}{Theorem~\ref{thm:HypDet2}}
\begin{thm:HypDet} Let $R$ be a unique factorization domain with $|R|\neq 3$ and 
${\bf a} = (a_S)_{S\subseteq [n]}\in R^{2^n}$ with $a_{\emptyset}=1$. There exists a symmetric matrix over $R$ with principal minors ${\bf a}$ if and only if 
\begin{itemize}
\item[(i)] for every $i,j\in [n]$, $a_{i}a_{j} - a_{ij} $ is a square in $R$, and  
\item[(ii)] for every $\gamma\in \SL_2(R)^n \rtimes S_n$, $(\gamma\cdot{\rm HypDet})({\bf a}) =0.$
\end{itemize}
\end{thm:HypDet}

While the description in (ii) involves a potentially infinite set of quartic polynomials, 
we give an explicit set of $\binom{n}{3}5^{n-3}$ elements  $\gamma\in \SL_2(R)^n \rtimes S_n$
that are necessary and sufficient in this characterization (see Remark~\ref{rem:count}). 
As observed in \cite[Observation III.15]{OedingThesis}, when $R$ is a field of characteristic zero, 
this is precisely the dimension of the linear space 
in $R[a_S : S\subseteq [n]]$ spanned by the polynomials $(\gamma\cdot{\rm HypDet})({\bf a})$.

As a corollary of Theorem~\ref{thm:HypDet2}, we obtain another proof of Oeding's result over $\C$: 

\newtheorem*{cor:HypDetC}{Corollary~\ref{cor:HypDetC}}
\begin{cor:HypDetC}
Let ${\bf a} = (a_S)_{S\subseteq [n]}\in \C^{2^n}$ with $a_{\emptyset}=1$. 
Then ${\bf a}$ belongs to the image of the principal minor map over $\C$ if and only if 
for every $\gamma\in \SL_2(\mathbb{C})^n \rtimes S_n$, $(\gamma\cdot{\rm HypDet})({\bf a}) =0.$ 
\end{cor:HypDetC}

We also get a semialgebraic description of the image of the principal minor map over $\R$.

\newtheorem*{cor:HypDetR}{Corollary~\ref{cor:HypDetR}}
\begin{cor:HypDetR}
Let ${\bf a} = (a_S)_{S\subseteq [n]}\in \R^{2^n}$ with $a_{\emptyset}=1$. 
Then ${\bf a}$ belongs to the image of the principal minor map over $\R$ if and only if 
for every $i,j\in [n]$, 
$a_{ i}a_{ j} - a_{ ij} \geq 0$ and 
for every $\gamma\in \SL_2(\mathbb{R})^n \rtimes S_n$, 
$(\gamma\cdot{\rm HypDet})({\bf a}) =0$.
\end{cor:HypDetR}

For real symmetric matrices, the inequalities $A_iA_j - A_{ij}\geq 0$ are 
a subset of the well-known Hadamard-Fischer inequalities 
$A_{S\cup i}A_{S\cup j} - A_SA_{S\cup ij} \geq 0$, which were also used by Holtz and Sturmfels in 
a partial characterization of the image of the principal minor map over $\R$, \cite[Theorem~6]{HoltzSturmfels07}.
Corollary~\ref{cor:HypDetR} states that these inequalities and the equations given by the 
images of the $2\times 2\times 2$ hyperdeterminant under $\SL_2(\mathbb{R})^n \rtimes S_n$ cut out the image of the principal minor map over $\R$. 
The image of the principal minor map over $\R$ is of special interest, 
as for positive semidefinite matrices $A$, the discrete probability measure on $2^{[n]}$ 
given by ${\rm Prob}(S) \propto A_S$ forms a determinantal point process. 
These distributions have several nice properties, such as negative association, and 
appear in a wide range of applications \cite{BBL09, DPP&MachineLearning}. 

Over fields of characteristic two, the discriminant of a univariate quadratic is a square. 
From Theorem~\ref{thm:HypDet2}, we then recover the results of van Geeman and Marrani \cite{char2} 
that the image is cut out by quadratic equations: 

\newtheorem*{cor:HypDetF2}{Corollary~\ref{cor:HypDetF2}}
\begin{cor:HypDetF2}
Let ${\bf a} = (a_S)_{S\subseteq [n]}\in R^{2^n}$ with $a_{\emptyset}=1$ where $R$ has characteristic two. 
 There exists a symmetric matrix over $R$ with principal minors ${\bf a} $
if and only if 
\begin{itemize}
\item[(i)] for every $i,j\in [n]$, $a_{i}a_{j} - a_{ij} $ is a square in $R$, and  
\item[(ii)] for every $\gamma\in \SL_2(\F_2)^n \rtimes S_n$, $\gamma\cdot(a_{\emptyset} a_{123} +  a_{1} a_{23} +a_{2} a_{13}+a_{3} a_{12})=0$.
\end{itemize}
In particular, for $R = \F_2$, {\rm (i)} is always satisfied and the image of the principal minor map is cut out by the quadratic equations in {\rm (ii)}. 
\end{cor:HypDetF2}

Several other incarnations of the principal minor map have been studied. 
Lin and Sturmfels \cite{LinSturmfels09} prove that the ideal of the image of the space of general $4\times 4$ complex matrices under the principal minor map is minimally generated by $65$ polynomials of degree $12$ and they conjecture that the image of the space of general square complex matrices is cut out by equations of degree $12$. Huang and Oeding \cite{HuangOeding17} solve this conjecture in the special case where all principal minors of same size are equal (\textit{the symmetrized principal minor assignment problem}). They provide a minimal parametrization of the respective varieties in the cases of symmetric, skew symmetric and square complex matrices. Kenyon and Pemantle \cite{KenyonPemantle14} adjust the principal minor map by adding \textit{the almost principal minors} to the vector in the image and they showed that the ideal of the variety in this case is generated by translations of a single relation. 
In future work, we intend to extend the techniques in this paper to other spaces of (non-symmetric) matrices. 

Griffin and Tsatsomeros \cite{GriffinTsatsomeros06,GriffinTsatsomeros06s} 
examine the complexity of computing the vector of the principal minors of an $n\times n$ matrix 
and give a numerical algorithm that reconstructs a preimage matrix, if it exists over $\C$, from such a vector. 
Rising, Kulesza and Taskarc \cite{RisingKuleszaTaskar15} provide an efficient algorithm for reconstruction in the symmetric case.

The paper is organized as follows. In Section~\ref{sec:background}, we establish 
notation and introduce the action of $\SL_2(R)^n \rtimes S_n$. 
In Section~\ref{sec:DetRep}, we establish the connection between square Rayleigh differences and determinantal representations and prove Theorem~\ref{thm:DeltaMinors}. 
In Section~\ref{sec:multiQsquares}, we use the group action of $\SL_2(R)^n \rtimes S_n$
to characterize the set multiquadratic squares in $R[x_1, \hdots, x_n]$ and 
use this to characterize the image of the principal minor map in Section~\ref{sec:finale}. 
To conclude, in Section~\ref{sec:otherDetRep}, we discuss some consequences for other determinantal representations as well as connections to the Grassmannian $\Gr_{\F}(d,n)$ over arbitrary fields.

{\bf Acknowledgements}. 
Both authors were partially supported by the National Science Foundation Grant No.~DMS-1620014 and DMS-1943363.  This material is based upon work directly supported by the National Science Foundation Grant No.~DMS-1926686, and indirectly supported by the National Science Foundation Grant No.~CCF-1900460.

\section{Background and notation}\label{sec:background}
Throughout the paper, we take $R$ to be a unique factorization domain. 
Let $R[{\bf x}]$ denote the polynomial ring $R[x_1, \hdots, x_n]$. 
For $\alpha = (\alpha_1, \hdots, \alpha_n) \in \N^n$ and $S\subseteq [n]$, we use the notation ${\bf x}^{\alpha}$ for $\prod_{i=1}^nx_i^{\alpha_i}$ and ${\bf x}^S$ for $\prod_{i\in S}x_i$. 
For $f\in R[{\bf x}]$, let $\deg_i(f)$ denote the degree of $f$ in the variable $x_i$. 
Given ${\bf d} = ({\bf d}_1, \hdots, {\bf d}_n) \in \Z_{\geq 0}^n$, let $R[{\bf x}]_{\leq {\bf d}}$ denote the set of 
polynomials with degree at most ${\bf d}_i$ in $x_i$ for each $i=1, \hdots, n$. 
These form an $R$-module of rank $\prod_{i=1}^n ({\bf d}_i+1)$. 
When ${\bf d}_1 = \hdots = {\bf d}_n = m$, we abbreviate  $R[{\bf x}]_{\leq (m,\hdots m)}$ by $R[{\bf x}]_{\leq {\bf m}}$.  
Of particular interest are \emph{multiaffine polynomials}, $R[{\bf x}]_{\leq {\bf 1}}$, with degree $\leq 1$ in each variable, and \emph{multiquadratic polynomials}, $R[{\bf x}]_{\leq {\bf 2}} $, with degree $\leq 2$ in each variable. 
We will often consider multi-homogenezations of these polynomials. 
Let $R[{\bf x}, {\bf y}]_{\bf d}$ denote the set of polynomials in the variables $x_1, \hdots, x_n$ and $y_1, \hdots, y_n$ 
that are homogeneous of degree ${\bf d}_i$ in each pair of variables $x_i, y_i$. 
For $f = \sum_{\alpha}c_{\alpha}{\bf x}^{\alpha}$, 
let 
$f^{\rm {\bf d}-hom}$ in $ R[{\bf x}, {\bf y}]_{\bf d}$ denote the polynomial 
\[
f^{\rm {\bf d}-hom} = \prod_{i=1}^n y_i^{{\bf d}_i} \cdot f\left(x_1/y_1, \hdots, x_n/y_n\right)
= \sum_{\alpha}c_{\alpha}{\bf x}^{\alpha}{\bf y}^{{\bf d} - \alpha}. 
\]

To a polynomial $f \in R[{\bf x}]_{\leq {\bf 2}}$, its discriminant with respect to any variable $x_k$, denoted ${\rm Discr}_{x_k}(f)$, 
equals $b^2 - 4ac$ where $f = ax_k^2 + bx_k + c$ and  $a$, $b$, $c$ do not involve the variable $x_k$. 
Similarly, for a multiquadratic polynomial $f \in R[{\bf x},{\bf y}]_{{\bf 2}}$, we can write 
$f = ax_k^2 + bx_ky_k + cy_k^2$ and define its discriminant with respect to $(x_k, y_k)$ 
to be ${\rm Discr}_{(x_k,y_k)}(f) = b^2 - 4ac$.

The symmetric group acts on $R[{\bf x}]$ by permuting the variables. That is, for $\pi\in S_n$, $\pi\cdot f$ equals $f(x_{\pi(1)}, \hdots, x_{\pi(n)})$. 
The action of $\SL_2(R)^n$ on $R[{\bf x}]_{\leq {\bf d}}$ is defined as follows. 
Let $\gamma =(\gamma_i)_{i\in [n]} $ in $\SL_2(R)^{n}$ where $\gamma_i = \tiny{\begin{pmatrix} a_i& b_i  \\ c_i & d_i \end{pmatrix}}$. Then for $f \in R[{\bf x}]_{\leq {\bf d}}$, 
\[
\gamma\cdot f = \prod_{i=1}^n (c_i x_i + d_i)^{{\bf d}_i} \cdot f\left(\frac{a_1 x_1 + b_1}{c_1 x_1 + d_1}, \hdots, \frac{a_n x_n + b_n}{c_n x_n + d_n}\right).
\]
One way to interpret this action is via the multi-homogenezation of $f$. 
The induced action of $\gamma$ on $f^{\rm {\bf d}-hom} $ is just an $R$-linear change of coordinates: 
\[\gamma \cdot f^{\rm {\bf d}-hom} = f^{\rm {\bf d}-hom}\left(\gamma_1\cdot\begin{pmatrix} x_1 \\ y_1\end{pmatrix}, \hdots,  \gamma_n\cdot\begin{pmatrix} x_n \\ y_n\end{pmatrix}\right).\]
Restricting to $y_1 = \hdots = y_n = 1$ gives back $\gamma\cdot f $.
Similarly, we can extend the action of $S_n$ to $R[{\bf x}, {\bf y}]_{\bf d}$
by simultaneous permutations of the $x_i$ and $y_i$ coordinates, i.e. $\pi \cdot f = f(x_{\pi(1)}, \hdots, x_{\pi(n)}, y_{\pi(1)}, \hdots, y_{\pi(n)})$. 

Note that  $R[{\bf x}]_{\leq {\bf 1}}$  and $R^{2^n}$ are isomorphic $R$-modules, 
and so the action of $\SL_2(R)^n\rtimes S_n$ on  $R[{\bf x}]_{\leq {\bf 1}}$ also gives one on $R^{2^n}$. 
Specifically, to an element ${\bf a} = (a_S)_{S\subseteq [n]}$ in $R^{2^n}$ we associate the multiaffine polynomial 
$f_{\bf a} = \sum_{S\subseteq [n]}a_S {\bf x}^{[n]\backslash S}$
and to any polynomial $f\in R[{\bf x}]_{\leq {\bf 1}}$ we associate the point ${\bf a} = (a_S)_{S\subseteq [n]}$ in $R^{2^n}$
with $a_S = {\rm coeff}(f, {\bf x}^{[n]\backslash S})$. 
Note that if $A$ is a symmetric matrix with $a_S = A_S$, then 
$f_{\bf a} = \det\left({\rm diag}(x_1, \hdots, x_n) + A\right)$. 
For any  $\gamma\in \SL_2(R)^n\rtimes S_n$, we define $\gamma\cdot {\bf a}$ by the relation
$f_{\gamma \cdot {\bf a}} = \gamma \cdot f_{\bf a}$. 
Similarly, we define the action of $\SL_2(R)^n\rtimes S_n$ on the polynomial ring 
$R[a_S : S\subseteq [n]]$ by $\gamma\cdot F({\bf a})  = F(\gamma\cdot{\bf a})$. 

\begin{example} For $n=3$, consider 
$\gamma = \left(\tiny{\begin{pmatrix}0 & -1 \\ 1 & 0  \end{pmatrix}}, {\rm Id}_2, {\rm Id}_2\right)$ in $\SL_2(R)^3$. For any point ${\bf a} = (a_S)_{S\subseteq[3]} \in R^{2^3}$, 
\[\gamma \cdot f_{\bf a} = x_1 f_{\bf a}(-x_1^{-1}, x_2, x_3)
=  \sum_{T\ni 1}  a_T{\bf x}^{[3]\backslash (T\backslash 1)} -  \sum_{T\not\ni 1} a_T{\bf x}^{[3]\backslash (T\cup 1)}
=  \sum_{S\not\ni 1}  a_{S\cup 1}{\bf x}^{[3]\backslash S} -  \sum_{S\ni 1} a_{S\backslash 1}{\bf x}^{[3]\backslash S}. 
\]
Taking coefficients of $\gamma \cdot f_{\bf a}$ shows that 
 $(\gamma\cdot {\bf a})_S$ equals $a_{S\cup 1}$ if $1\not\in  S$ and 
$-a_{S\backslash 1}$ if $1\in S$.  For $F({\bf a}) = a_2a_3 - a_{\emptyset}a_{23}$, 
we see that $\gamma\cdot F({\bf a}) = F(\gamma \cdot {\bf a}) = 
a_{12}a_{13} - a_1 a_{123}$.  From this we see that 
the image of $F$ under the group  $\SL_2(R)^3\rtimes S_3$ includes 
all six polynomials of the form $a_{i}a_j - a_{\emptyset}a_{ij}$ and $a_{ik}a_{jk} - a_{k}a_{ijk}$ 
for $\{i,j,k\} = \{1,2,3\}$.  
\end{example}

\begin{prop}\label{prop:SL2onDelta}
Consider an element $\gamma \in \SL_2(R)^{n}$ that acts by 
$\tiny{\begin{pmatrix} a&  b \\ c & d \end{pmatrix}}$ in the $k$-th coordinate 
and the identity in all others.  For any $f\in R[{\bf x}]_{\leq {\bf 1}}$, 
\[
\Delta_{ij}(\gamma \cdot f) = \begin{cases} 
 \Delta_{ij}(f) & \text{ if } k = i, j \\
\gamma \cdot \Delta_{ij}(f) & \text{ otherwise.}
\end{cases}
 \]
 \end{prop}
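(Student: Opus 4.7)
The plan is to reduce to the combinatorics of the coefficients of $f$ with respect to $x_i$ and $x_j$. Since $f\in R[{\bf x}]_{\leq {\bf 1}}$, I can write uniquely
\[
f = A + B\, x_i + C\, x_j + D\, x_i x_j,
\]
where $A,B,C,D\in R[{\bf x}_{\neq i,j}]_{\leq {\bf 1}}$. A short direct calculation from the definition of the Rayleigh difference shows
\[
\Delta_{ij}(f) = BC - AD,
\]
so the statement becomes an assertion about how $\gamma$ transforms $BC - AD$.

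First I would handle the case $k\in\{i,j\}$; by symmetry take $k=i$. Here $\gamma$ acts by multiplication by $(cx_i+d)$ and by the M\"obius substitution $x_i \mapsto (ax_i+b)/(cx_i+d)$, and since $A,B,C,D$ do not involve $x_i$, a direct expansion gives
\[
\gamma\cdot f = (Ad+Bb) + (Ac+Ba)\,x_i + (Cd+Db)\,x_j + (Cc+Da)\,x_i x_j.
\]
Reading off the new coefficients $A',B',C',D'$ and applying $\Delta_{ij}(\gamma\cdot f) = B'C' - A'D'$, almost every cross term cancels and what remains is
\[
(ad-bc)(BC-AD) = BC-AD = \Delta_{ij}(f),
\]
since $ad-bc=1$. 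This is the place where the $\SL_2$ hypothesis (as opposed to $\GL_2$) is genuinely used; it is the main conceptual point of the case but the calculation itself is a one-liner.

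For the case $k\notin\{i,j\}$, the action of $\gamma$ involves only $x_k$, so it commutes with the operation of extracting the coefficient of $x_i^\alpha x_j^\beta$. Viewing each of $A,B,C,D$ as a multiaffine polynomial in the remaining variables (so that the $\SL_2(R)$-action in the $k$-th slot makes sense on each of them), this commutativity gives
\[
\gamma\cdot f = (\gamma\cdot A) + (\gamma\cdot B)\,x_i + (\gamma\cdot C)\,x_j + (\gamma\cdot D)\,x_i x_j,
\]
and therefore $\Delta_{ij}(\gamma\cdot f) = (\gamma\cdot B)(\gamma\cdot C) - (\gamma\cdot A)(\gamma\cdot D)$. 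The conclusion then comes down to the multiplicativity of the $\SL_2$-action on products of polynomials, where one must keep careful track of degree bookkeeping: if $g\in R[{\bf x}]_{\leq {\bf d}}$ and $h\in R[{\bf x}]_{\leq {\bf e}}$ then $\gamma\cdot(gh)=(\gamma\cdot g)(\gamma\cdot h)$ as elements of $R[{\bf x}]_{\leq {\bf d}+{\bf e}}$, because the prefactor $(cx_k+d)^{{\bf d}_k+{\bf e}_k}$ splits as $(cx_k+d)^{{\bf d}_k}\cdot(cx_k+d)^{{\bf e}_k}$. Viewing $\Delta_{ij}(f)=BC-AD$ inside $R[{\bf x}_{\neq i,j}]_{\leq {\bf 2}}$ (degree two in $x_k$), this yields $(\gamma\cdot B)(\gamma\cdot C)-(\gamma\cdot A)(\gamma\cdot D)=\gamma\cdot(BC-AD)=\gamma\cdot\Delta_{ij}(f)$, completing the proof.

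The only real obstacle is the interpretive one: making sure the action of $\gamma$ is applied in the correct weighted degree on each piece. Everything else is bookkeeping, and the only substantive algebraic input is the identity $ad-bc=1$, which is exactly what makes the ``diagonal'' case $k\in\{i,j\}$ produce an invariant rather than a mere covariant of $f$.
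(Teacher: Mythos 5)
Your proof is correct and essentially the same as the paper's: your coefficients $A,B,C,D$ are exactly the $f^{ij}, f_i^j, f_j^i, f_{ij}$ used there, the $k\in\{i,j\}$ case is the same expansion producing the factor $ad-bc=1$, and the $k\notin\{i,j\}$ case is the same commutation-with-$\partial_i,\partial_j$ argument (you merely make explicit the multiplicativity/degree bookkeeping that the paper leaves implicit).
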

 \begin{proof}
For each $k\in [n]$, let $f_{k} $ denote the derivative of $f$ with respect to $x_k$ and let 
 $f^{k}$ denote its specialization to $x_k=0$.  We can then write $f = x_{k}f_{k} + f^{k}$. 
 Then 
 \[
 \gamma\cdot f  \  =  \  
 (a x_k + b)f_k + (cx_k + d) f^k
\  =  \ x_k (af_k + cf^k) + (bf_k + df^k).
 \]
In particular, $\frac{\partial}{\partial x_{k}}( \gamma\cdot f ) = (af_k + cf^k)$ and 
$( \gamma\cdot f )|_{x_{k}=0} = bf_k + df^k$.

To see how this action affects Rayleigh differences, 
we write the polynomial $\Delta_{ij}(f)$ as 
\[
\Delta_{ij}( f) = f_if_j - f f_{ij} = f_{i}^jf_j^i - f^{ij} f_{ij}, 
\]
where $f_i^j$ for example denotes $\frac{\partial f}{\partial x_i}|_{x_j = 0}$. 
If $k=i$, applying $\gamma$ then gives 
\[
\Delta_{ij}(\gamma\cdot f)  = (a f_{i}^j +  c f^{ij}) (bf_{ij}+ df_j^i) - (bf_i^{j}+df^{ij}) (af_{ij} + c f^i_{j}) = (ad-bc) (f_{i}^jf_j^i - f^{ij} f_{ij} ),\]
showing that $\Delta_{ij}(f)$ is invariant. Another way to see this is to view $\Delta_{ij}(f)$ as the resultant 
of $f_j$ and $f^j$ with respect to $x_i$.  
Note that for $k\neq i,j$, $\gamma$ commutes with taking the derivatives with respect to $x_i$, $x_j$ 
and restricting $x_i$, $x_j$ to zero. 
Therefore $\Delta_{ij}(\gamma \cdot f) = \gamma \cdot \Delta_{ij}(f)$. 
 \end{proof}

\begin{cor}\label{cor:SquareDelInvariance}
The set of polynomials $f\in R[{\bf x}]_{\leq 1}$ such that $\Delta_{ij}(f)$ 
is a square for all $i,j\in [n]$ is invariant under the action of $\SL_2(R)^n \rtimes S_n$.
\end{cor}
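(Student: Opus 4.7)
The plan is to reduce to one-coordinate generators of $\SL_2(R)^n$ and the permutations in $S_n$, then invoke Proposition~\ref{prop:SL2onDelta} and a direct chain-rule computation, respectively. Because $\SL_2(R)^n$ is generated by the inclusions of $\SL_2(R)$ into individual factors, invariance under this subgroup reduces to the case of a single element $\gamma$ that is nontrivial in exactly one coordinate $k \in [n]$.

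Fix such a $\gamma$ and a pair $i,j\in[n]$. When $k\in\{i,j\}$, Proposition~\ref{prop:SL2onDelta} shows that $\Delta_{ij}(\gamma\cdot f)=\Delta_{ij}(f)$, which is a square by hypothesis. When $k\notin\{i,j\}$, the same proposition gives $\Delta_{ij}(\gamma\cdot f)=\gamma\cdot\Delta_{ij}(f)$, so it remains to show that $\gamma$ carries squares to squares in the relevant degree range. Writing $\Delta_{ij}(f)=g^2$ with $g\in R[\mathbf{x}]_{\leq \mathbf{e}}$ (where $\mathbf{e}_\ell=1$ for $\ell\neq i,j$ and $\mathbf{e}_i=\mathbf{e}_j=0$), the multi-homogenization viewpoint makes this transparent: the action of $\gamma$ on $R[\mathbf{x},\mathbf{y}]$ is a linear change of coordinates and hence a ring homomorphism, so $\gamma\cdot g^2=(\gamma\cdot g)^2$ as elements of $R[\mathbf{x},\mathbf{y}]_{2\mathbf{e}}$. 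Specializing $\mathbf{y}=\mathbf{1}$ returns the corresponding identity in $R[\mathbf{x}]$, exhibiting $\gamma\cdot\Delta_{ij}(f)$ as a square.

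For the $S_n$ factor, the chain rule gives $\partial(\pi\cdot f)/\partial x_i=\pi\cdot(\partial f/\partial x_{\pi^{-1}(i)})$ for every $\pi\in S_n$, from which one reads off $\Delta_{ij}(\pi\cdot f)=\pi\cdot\Delta_{\pi^{-1}(i),\pi^{-1}(j)}(f)$. Permutation of variables trivially preserves squares, so this polynomial is again a square whenever every $\Delta_{pq}(f)$ is.

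Combining the two invariance statements shows that the set in question is preserved by the full semidirect product $\SL_2(R)^n\rtimes S_n$. There is no substantial obstacle: the essential work was done in Proposition~\ref{prop:SL2onDelta}, and the corollary reduces to checking that both the $\SL_2(R)^n$ and $S_n$ actions respect squareness — the former via the multiplicativity of the multi-homogenization action, the latter trivially.
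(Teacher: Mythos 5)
Your proposal is correct and follows essentially the same route as the paper: reduce the $\SL_2(R)^n$ part to single-coordinate elements handled by Proposition~\ref{prop:SL2onDelta}, observe that the (degree-aware) action sends squares to squares, and treat $S_n$ via the identity $\Delta_{ij}(\pi\cdot f)=\pi\cdot\Delta_{\pi^{-1}(i)\pi^{-1}(j)}(f)$. Your multi-homogenization justification of $\gamma\cdot g^2=(\gamma\cdot g)^2$ is just a slightly more explicit phrasing of the paper's remark that $\gamma$ acts on the square as an element of $R[{\bf x}]_{\leq {\bf 2}}$ and on its square root as an element of $R[{\bf x}]_{\leq {\bf 1}}$.
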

\begin{proof}
Note that the set of squares in $R[{\bf x}]_{\leq {\bf 2}}$ is invariant 
under the action of $\SL_2(R)^n \rtimes S_n$. 
If $g = h^2$ where $h\in R[{\bf x}]_{\leq {\bf 1}}$
then for $\pi\in S_n$, $\pi\cdot g = (\pi \cdot h)^2$. 
Similarly for $\gamma \in \SL_2(R)^n $, $\gamma \cdot g = (\gamma \cdot h)^2$. 
Note here that $\gamma$ acts on $g$ as an element of 
$R[{\bf x}]_{\leq {\bf 2}}$ and acts on $h$ as an element of 
$R[{\bf x}]_{\leq {\bf 1}}$, regardless of their degrees. 

First we note invariance under the symmetric group. For any $\pi\in S_n$, 
$\Delta_{ij}(\pi \cdot f)$ equals $\pi\cdot \Delta_{\pi^{-1}(i)\pi^{-1}(j)}(f)$. 
Therefore if $f$ has the property that 
$\Delta_{ij}(f)$ is a square for all $i,j$, then so does $\pi\cdot f$. 
Similarly, by Proposition~\ref{prop:SL2onDelta}, 
for any $\gamma\in \SL_2(R)^n$, if $\Delta_{ij}(f)\in R[{\bf x}]_{\leq {\bf 2}}$ is a square, then so is  $\Delta_{ij}(\gamma\cdot f)$. 
\end{proof}

We will also use the usual homogenization of a polynomial to some total degree $d$, using a single homogenizing variable $y$. 
That is, for $f = \sum_{\alpha}c_{\alpha}{\bf x}^{\alpha}\in R[{\bf x}]$ of total degree $d=\deg(f)$, 
its homogenization is 
\[
\overline{f} = y^d f\left(x_1/y, \hdots, x_n/y\right)=
 \sum_{\alpha}c_{\alpha}{\bf x}^{\alpha}y^{d - |\alpha|}  \ \in \ R[{\bf x}, y].
 \]
To end this section, we remark that the condition that $\Delta_{ij}(f)$ is 
a square is robust to various homogenizations. 

\begin{prop}\label{prop:homDelta}
	Let $f\in R[{\bf x}]_{\leq {\bf 1}}$ and let $\overline{f}$ denote the homogenization of 
	$f$ in $R[{\bf x},y]$. Then the following are equivalent
	\begin{center}\begin{tabular}{ll}
	{\rm (a)} $\Delta_{ij}(f)$ is a square in $R[{\bf x}]$, 
	& {\rm (b)} $ \Delta_{ij}(\overline{f})$ is a square in $R[{\bf x},y]$,\smallskip \\
	{\rm(c)} $\overline{\Delta_{ij}(f)}$ is a square in $R[{\bf x},y]$, 
	& {\rm(d)} $(\Delta_{ij}(f))^{{\bf 2}-{\rm hom}}$ is a square in $R[{\bf x},{\bf y}]_{\bf 2}$.	
	\end{tabular}\end{center}
\end{prop}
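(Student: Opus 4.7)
The approach is to reduce each of (b), (c), (d) directly to (a) using a dehomogenize-one-way, exhibit-a-square-root-the-other strategy, built on the key structural fact that $\Delta_{ij}(f)$ involves neither $x_i$ nor $x_j$. Writing the multiaffine polynomial as
\[
f = x_i x_j A + x_i B + x_j C + D
\]
with $A, B, C, D \in R[x_k : k \neq i, j]$ multiaffine, a direct calculation gives $\Delta_{ij}(f) = BC - AD$; in particular, $\Delta_{ij}(f)$ is multiquadratic in the $n-2$ remaining variables and carries no dependence on $x_i$ or $x_j$.

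For the implications (b), (c), (d) $\Rightarrow$ (a), I would note that substituting $1$ for the homogenizing variables in each of these polynomials returns $\Delta_{ij}(f)$: this is immediate for (c) and (d) by definition of (multi)homogenization, and for (b) it holds because the Rayleigh difference commutes with substitution in any variable disjoint from $x_i, x_j$, so $\Delta_{ij}(\overline{f})|_{y=1} = \Delta_{ij}(\overline{f}|_{y=1}) = \Delta_{ij}(f)$. Since any specialization of a square is a square, each of (b), (c), (d) implies (a).

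For the converses, assume $\Delta_{ij}(f) = h^2$ with $h \in R[{\bf x}]$. Because $R$ is a domain and degree is additive under multiplication, $h$ must itself be multiaffine and involve no $x_i$ or $x_j$. Then (c) follows from the identity $\overline{h^2} = (\overline{h})^2$. For (d), multihomogenization in the pairs $(x_k, y_k)$ with $k \neq i, j$ respects products, so it squares the multihomogenization of $h$; combined with the square prefactor $y_i^2 y_j^2$ contributed by the absent variables $x_i, x_j$, this realizes $(\Delta_{ij}(f))^{{\bf 2}-{\rm hom}}$ as an explicit square in $R[{\bf x}, {\bf y}]_{\bf 2}$. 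For (b), setting $d = \deg f$ and expanding
\[
\overline{f} = y^{d-2}x_ix_jA(x/y) + y^{d-1}x_iB(x/y) + y^{d-1}x_jC(x/y) + y^dD(x/y),
\]
a direct computation yields the clean identity
\[
\Delta_{ij}(\overline{f}) = y^{2d-2}\bigl((BC)(x/y) - (AD)(x/y)\bigr) = y^{2d-2}\,\Delta_{ij}(f)(x/y) = \bigl(y^{d-1}h(x/y)\bigr)^2,
\]
a square in $R[{\bf x}, y]$.

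The main technical piece is the identity in the (b) computation: one must verify that the cross-terms in $\Delta_{ij}(\overline{f})$ cancel so that the result is a pure power of $y$ times $\Delta_{ij}(f)(x/y)$. This is bookkeeping that exploits the bilinearity of the Rayleigh difference in the $(x_i, x_j)$-variables and the fact that homogenization of $f$ redistributes $y$-powers uniformly across the four coefficients $A, B, C, D$. Once this identity is in hand, the remaining implications follow from elementary properties of (multi)homogenization over a domain.
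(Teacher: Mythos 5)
Your proposal is correct and takes essentially the same approach as the paper: specialize the homogenizing variables to $1$ to get (b),(c),(d) $\Rightarrow$ (a), and homogenize a square root $h$ of $\Delta_{ij}(f)$ (which necessarily avoids $x_i,x_j$ and is multiaffine) for the converses. The only cosmetic difference is in (a) $\Rightarrow$ (b): you establish $\Delta_{ij}(\overline{f}) = y^{2d-2}\,\Delta_{ij}(f)({\bf x}/y)$ by direct expansion of $f = x_ix_jA + x_iB + x_jC + D$, whereas the paper gets the same conclusion more quickly by noting that $\Delta_{ij}(\overline{f})$ is homogeneous of degree $2d-2$ and restricts to $\Delta_{ij}(f)$ at $y=1$, hence equals $y^{2d-2-2m}\overline{\Delta_{ij}(f)}$.
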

\begin{proof}
The implications (b)$\Rightarrow$(a), (c)$\Rightarrow$(a), and (d)$\Rightarrow$(a)
follow from restricting to $y=1$ or $y_1 = \hdots = y_n = 1$. 
For (a)$\Rightarrow$(c) and (a)$\Rightarrow$(d), we note that if $\Delta_{ij}(f) = g^2$ 
for some $g\in R[{\bf x}]$, then $\overline{\Delta_{ij}(f)} = (\overline{g})^2$ and 
$(\Delta_{ij}(f))^{{\bf 2}-{\rm hom}} = (g^{{\bf 1}-{\rm hom}})^2$. 
For (a)$\Rightarrow$(b), let $f\in R[{\bf x}]_{\leq {\bf 1}}$ with total degree $d$ and suppose that 
$\Delta_{ij}f = g^2$ for some $g \in R[{\bf x}]$. Let $m = \deg(g)$. 
By definition, $\Delta_{ij}(\overline{f}) \in R[{\bf x}, y]$ 
is homogeneous of degree $2d-2$. Its restriction to $y=1$ equals $\Delta_{ij}f$. 
Therefore $\Delta_{ij}(\overline{f})$ equals $y^{2d-2-2m}\overline{\Delta_{ij}(f)}$, showing 
that  $\Delta_{ij}(\overline{f})$ can be written as $(y^{d-1-m}\overline{g})^2$. 
\end{proof}	

\section{Squares to Determinantal Representations}\label{sec:DetRep}

In this section we prove Theorem~\ref{thm:DeltaMinors}.
This relies heavily on the structure of the polynomials $\Delta_{ij}(f)$ 
defined in \eqref{eq:DeltaDef}. 
If $f$ is multiaffine, then $\Delta_{ij}(f)$ does not involve the variables $x_i$ and $x_j$ and 
has degree $\leq 2$ in the other variables. In particular, if $\Delta_{ij}(f) = (g_{ij})^2$ for some $g_{ij}\in R[x_1, \hdots, x_n]$, then $g_{ij}$ does not involve the variables $x_i$ and $x_j$ and 
has degree $\leq 1$ in the rest. 
We first work with more general determinantal representations in a larger ring $R[x_1, \hdots, x_m] = R[x_{n+1}, \hdots, x_m][{\bf x}]$ with $n<m$.

\begin{thm}\label{thm:DeltaDetRep}
Let $f \in R[x_1, \hdots, x_m]$ be a homogeneous polynomial of degree $n<m$. Suppose $f$ is multiaffine in the variables $x_1,\ldots,x_n$ and its coefficient of $x_1\cdots x_n$ equals one. 
Then $f = \det({\rm diag}(x_1,\hdots,x_n) + \sum_{j=n+1}^m x_j M_j)$ for some $M_j\in {\rm Sym}_n(R)$ if and only if  $\Delta_{ij} f$ is a square in $R[x_1\hdots, x_m]$ for all  $1 \leq i,j \leq n$.
\end{thm}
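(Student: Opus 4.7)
The proof has two directions. For the forward direction, I would use Desnanot--Jacobi (Dodgson condensation). Given $f = \det(A)$ with $A = \mathrm{diag}(x_1, \ldots, x_n) + \sum_{j > n} x_j M_j$ symmetric, the standard identifications give $\partial_i f = \det(A^{(i;i)})$ and $\partial_i \partial_j f = \det(A^{(ij;ij)})$, while symmetry of $A$ gives $\det(A^{(i;j)}) = \det(A^{(j;i)})$. Dodgson's identity then rearranges to $\Delta_{ij}(f) = \det(A^{(i;j)})^2$, manifestly a square. The case $i = j$ is trivial since $\partial_i^2 f = 0$ by multiaffineness.

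For the reverse direction, I would induct on $n$, with the base case $n = 1$ immediate (take $M_j = (m_j)$ where $f = x_1 + \sum_{j>1} m_j x_j$). For the inductive step, decompose $f = x_n f_n + f^n$ with $f_n = \partial_n f$ and $f^n = f|_{x_n = 0}$. A direct expansion shows that, for $i, j < n$, the Rayleigh difference $\Delta_{ij}(f)$ is a polynomial of degree at most $2$ in $x_n$ whose leading coefficient is $\Delta_{ij}(f_n)$. Since $R[\mathbf{x}]$ is a UFD and $\Delta_{ij}(f)$ is a square of degree at most $2$ in $x_n$, its square root must be linear in $x_n$, which forces $\Delta_{ij}(f_n)$ to itself be a square. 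Since $f_n$ is multiaffine in $x_1, \ldots, x_{n-1}$, homogeneous of degree $n-1$, and has unit coefficient on $x_1 \cdots x_{n-1}$, the inductive hypothesis applies and yields a symmetric representation $f_n = \det(B)$ with $B = \mathrm{diag}(x_1, \ldots, x_{n-1}) + N'$.

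It remains to border $B$ to $A = \begin{pmatrix} B & v \\ v^T & x_n + c \end{pmatrix}$ so that $\det(A) = f$. By the Schur complement formula, this reduces to solving the single polynomial equation $c f_n - v^T \adj{B} v = f^n$ for $v$ and $c$ with entries in $R[x_{n+1}, \ldots, x_m]$. The forward direction applied to the hoped-for representation would give $\Delta_{in}(f) = ((\adj{B} v)_i)^2$ for $i < n$, so the natural plan is to pick a square root $w_i$ of $\Delta_{in}(f)$ in the UFD, set $v := B w / f_n$ (forced by $\adj{B} v = w$ together with $\adj{B} \cdot B = f_n I$), and $c := (f^n + v^T \adj{B} v)/f_n$.

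The main obstacle is to prove: (a) the signs of the $w_i$ can be chosen consistently so that $f_n$ divides each entry of $B w$ in $R[\mathbf{x}]$, and (b) the resulting $c$ lies in $R[x_{n+1}, \ldots, x_m]$ in the correct affine form. For (a), I would derive cross-relations among the $w_i$ from the square conditions on $\Delta_{ij}(f)$ for $i, j < n$; these identities, which are formal consequences of Desnanot--Jacobi, constrain products $w_i w_j$ and thereby pin down the relative signs after one arbitrary initial choice. With signs fixed, unique factorization in $R[\mathbf{x}]$ reduces the divisibility $f_n \mid (Bw)_k$ to a comparison of polynomial factors and degrees in each variable. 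Part (b) then follows by an analogous divisibility argument using the identity constraints between $f^n$, $f_n$, and $v^T \adj{B} v$, and the resulting $A$ is automatically symmetric of the desired form by the degree bounds on $v$ and $c$.
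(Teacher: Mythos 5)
Your forward direction is the paper's own argument (Desnanot--Jacobi plus symmetry of the representing matrix) and is fine. The converse, however, is where all the content of the theorem sits, and your proposal stops exactly there: items (a) and (b), which you flag as ``the main obstacle,'' are not loose ends but the entire proof. You give no actual argument that $f_n=\partial f/\partial x_n$ divides each entry of $Bw$, nor that the quotient is linear in $x_{n+1},\hdots,x_m$, nor that $c$ is of the right form --- after substituting $v=Bw/f_n$ the latter amounts to $f_n^2 \mid f_nf^n + w^TBw$, a genuinely nontrivial divisibility. The ``cross-relations'' you invoke, e.g.\ $w_iw_j\equiv g_{ij}\,f_n$ where $g_{ij}^2=\Delta_{ij}(f)$, only hold modulo $\langle f\rangle$, so using them to fix signs and deduce divisibility forces you to control zero divisors in $R[\mathbf{x}]/\langle f\rangle$, i.e.\ to deal with reducibility of $f$. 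And precisely when $f$ is reducible the scheme degenerates: if $f=gh$ with $g$ free of $x_n$, then by Lemma~\ref{lem:factorDelta} the polynomials $\Delta_{in}(f)$ with $i$ in the support of $g$ vanish identically, so those $w_i=0$ and the product relations carry no sign information, while $f_n=g\,h_n$ shares the factor $g$ with the entries you must divide --- so ``a comparison of polynomial factors and degrees'' cannot be purely formal. None of this is carried out, so the reverse implication is not proved.

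For comparison, the paper resolves exactly these difficulties as follows: reduce to $f$ irreducible (the reducible case is handled by Lemma~\ref{lem:factorDelta} and block-diagonal sums); assemble \emph{all} the square roots into one symmetric matrix $G=(g_{ij})$ with $g_{ii}=\partial f/\partial x_i$; use that $R[x_1,\hdots,x_m]/\langle f\rangle$ is a domain and that $g_{11}$ is not a zero divisor there to fix the signs of the $g_{ij}$ and to show every $2\times 2$ minor of $G$ lies in $\langle f\rangle$; then invoke \cite[Lemma 4.7]{PV13} to conclude $f^{k-1}$ divides all $k\times k$ minors, set $M=G^{\mathrm{adj}}/f^{n-2}$, and verify $\det(G)=f^{n-1}$ and the normalization $\sum_{i=1}^n x_iM_i=\mathrm{diag}(x_1,\hdots,x_n)$ by specializing $x_{n+1}=\hdots=x_m=0$. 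Your bordering induction is a legitimately different strategy and might be salvageable by running an analogous modulo-$f$ analysis for the divisibility of $Bw$ and of $f_nf^n+w^TBw$, together with a separate treatment of reducible $f$; but as written those key steps are assertions rather than proofs, so the proposal has a genuine gap.
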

\begin{proof}[Proof of ($\Rightarrow$)]
This follows from a classical equality on the principal minors of an $n\times n$ matrix, used by 
Dodgson \cite{Dodgson} as a method for computing determinants.  For subsets $S, T\subset [n]$ of equal cardinality, 
let $M(S,T)$ denote the submatrix of $M$ obtained by \emph{dropping} rows $S$ and columns $T$ from $M$. 
Then for any $i\neq j\in [n]$, 
\begin{equation}\label{eq:dodgson} 
\det(M(i,i))\cdot \det(M(j,j))  - \det(M)\det(M(\{i,j\}, \{i,j\})) = \det(M(i,j))\cdot \det(M(j,i)).
\end{equation}
Note that for $M = {\rm diag}(x_1,\hdots,x_n) + \sum_{j=n+1}^m x_j M_j$ and any subset $S\subseteq [n]$, 
the principal minor $\det(M(S,S))$ equals the derivative of $f$ with respect to the variables in $S$,  
$\left(\prod_{i\in S}\frac{\partial}{\partial x_i}\right) f$. 
The equation above then gives that $\Delta_{ij}(f)$ equals $\det(M(i,j))\cdot \det(M(j,i))$. 
Since $M$ is symmetric, this shows that $\Delta_{ij}(f)= (\det(M(i,j)))^2$.
\end{proof}

We prove the other direction of this theorem after the following lemma. 

\begin{lem}\label{lem:factorDelta}
Let $f \in R[x_1, \hdots, x_m]$ be multiaffine in the variables $x_1,\ldots,x_n$ and its coefficient of $x_1\cdots x_n$ equal one.
If $f = g\cdot h$ for some $g,h\in R[x_1, \hdots, x_m]$, then $g$ and $h$ are multiaffine in 
disjoint subsets of the variables $x_1, \hdots, x_n$ and we can take their leading coefficients in these variables to be  one. 
Moreover, $\Delta_{ij} f$ is a square if and only if
 $\Delta_{ij} g$ and $\Delta_{ij} h$ are squares.
\end{lem}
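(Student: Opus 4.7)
The plan is to handle the two halves of the lemma separately: first the structural statement describing the factorization, then the Rayleigh-difference biconditional.

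\textbf{Structural part.} Since $\deg_i(f) \leq 1$ for each $i \in [n]$ and $f = gh$, the identity $\deg_i(g) + \deg_i(h) = \deg_i(f) \leq 1$ forces at most one of $g, h$ to involve each $x_i$, and shows that both are multiaffine in the variables from $[n]$ they do contain. The coefficient of $x_1 \cdots x_n$ in $f$ being one forces $\deg_i(f) = 1$ for each $i \in [n]$, so every such $x_i$ appears in exactly one factor, yielding a partition $[n] = I \sqcup J$ with $g$ multiaffine in $\{x_i : i \in I\}$ and $h$ in $\{x_j : j \in J\}$. Viewing $g$ and $h$ as polynomials in $x_1, \ldots, x_n$ with coefficients in $R[x_{n+1}, \ldots, x_m]$, their leading coefficients in $\prod_{i \in I} x_i$ and $\prod_{j \in J} x_j$ multiply to $1$, so both are units in $R[x_{n+1}, \ldots, x_m]$, which is to say units in $R$; rescaling $g$ and $h$ by these inverse units normalizes both leading coefficients to one.

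\textbf{Rayleigh differences.} A direct Leibniz computation yields
\[
\Delta_{ij}(gh) = (g_i h + g h_i)(g_j h + g h_j) - gh\bigl(g_{ij} h + g_i h_j + g_j h_i + g h_{ij}\bigr) = h^2 \Delta_{ij}(g) + g^2 \Delta_{ij}(h),
\]
the cross terms cancelling. I would then split into cases using the partition $I \sqcup J$. If $i$ and $j$ lie in different parts, one of $g_i, g_j$ and one of $h_i, h_j$ vanish, forcing $\Delta_{ij}(g) = \Delta_{ij}(h) = 0$ and hence $\Delta_{ij}(f) = 0$; all three are trivially squares. If $i, j \in I$, then $h$ is free of both $x_i$ and $x_j$, so $\Delta_{ij}(h) = 0$ and the identity reduces to $\Delta_{ij}(f) = h^2 \Delta_{ij}(g)$; the case $i, j \in J$ is symmetric.

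\textbf{The UFD argument.} In the remaining case $\Delta_{ij}(f) = h^2 \Delta_{ij}(g)$, squareness of $\Delta_{ij}(g)$ immediately implies squareness of $\Delta_{ij}(f)$. For the converse, suppose $h^2 \Delta_{ij}(g) = p^2$ in the UFD $R[x_1, \ldots, x_m]$. Matching prime multiplicities gives $v_\pi(\Delta_{ij}(g)) = 2(v_\pi(p) - v_\pi(h))$ at every prime $\pi$, so each $v_\pi(p) \geq v_\pi(h)$, meaning $h \mid p$; writing $p = hq$ recovers $\Delta_{ij}(g) = q^2$. This is the main substantive use of the UFD hypothesis, and is the step I would expect to be the most delicate to articulate precisely; the rest of the argument is routine bookkeeping and would break down over a general integral domain.
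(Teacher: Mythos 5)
Your proof is correct and follows essentially the same route as the paper: split the variables via the degree count, normalize the unit leading coefficients, and reduce to $\Delta_{ij}(gh)=h^2\Delta_{ij}(g)$ (resp.\ $g^2\Delta_{ij}(h)$, or $0$ in the mixed case). The only difference is that you make explicit two points the paper leaves implicit — the product identity $\Delta_{ij}(gh)=h^2\Delta_{ij}(g)+g^2\Delta_{ij}(h)$ and the prime-multiplicity argument showing $h^2\Delta_{ij}(g)$ a square forces $\Delta_{ij}(g)$ to be a square in the UFD — both of which are fine.
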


\begin{proof}
For any $i\in [n]$, the degree of $f$ in $x_i$  must be the sum of the degrees of $g$ and $h$ in $x_i$. Since this sum of nonnegative numbers is one for each $i\in [n]$, 
we see that for some subset $I\subseteq[n]$, 
$g$ is multiaffine in $\{x_i : i\in I\}$, $h$ is multiaffine in $\{x_j : j\not\in I\}$,
and $\deg_i(h) = \deg_j(g) = 0$ for any $i\in I$ and $j\not\in I$. 

The highest degree term in $f$ with respect to the variables $x_1, \hdots, x_n$, 
$\prod_{i=1}^nx_i$, is the product of the highest degree terms in $g$ and $h$. 
Therefore for some $r, s\in R$, these terms are $r\prod_{i\in I}x_i$ and 
$s\prod_{j\not\in I}x_j$, respectively. Since $rs = 1$, we can replace $g$ with $sg$ and $h$ with 
$rh$ to obtain a factorization in which both have leading coefficient equal to $1$. 

For $i\in I$, $\partial(g\cdot h)/\partial x_i = h\cdot \partial g/\partial x_i$ 
and similarly, for $j\not\in I$, $\partial (g\cdot h)/\partial x_j = g\cdot \partial h/\partial x_j$. 
From this, one can check that 
$\Delta_{ij}(gh)$ equals  $h^2\Delta_{ij}(g)$ for $i,j \in I$, 
$g^2\Delta_{ij}(h) $ for $i,j\in [n]\backslash I$ and zero otherwise. 
In each case, we see that $\Delta_{ij}(gh)$ is a square in $R[x_1, \hdots, x_m]$ if and only if 
both $\Delta_{ij}(g)$ and $\Delta_{ij}(h)$ are squares. 
\end{proof}

\begin{proof}[Proof of Theorem~\ref{thm:DeltaDetRep}($\Leftarrow$)]
Let $S $ denote the ring $R[x_1, \hdots, x_m]$. 
Suppose that $f$ is irreducible in $S$. 
For each $i\in [n]$, let $g_{ii} = \frac{\partial f}{\partial x_i}$ 
and for each $i<j$, suppose that $\Delta_{ij}f$ equals $(g_{ij})^2$  for some $g_{ij}\in S$.
This implies that $\frac{\partial f}{\partial x_i}\cdot \frac{\partial f}{\partial x_j}$ is
equivalent to $(g_{ij})^2$ modulo $\langle f \rangle$. 
For $1<i<j$, the polynomials $(g_{11}g_{ij})^2$ and $(g_{1j}g_{i1})^2$ 
are both equivalent to $(\frac{\partial f}{\partial x_1})^2\frac{\partial f}{\partial x_i}\cdot \frac{\partial f}{\partial x_j}$, showing that 
\[(g_{11}g_{ij}- g_{1j}g_{i1})(g_{11}g_{ij}+ g_{1j}g_{i1}) =  (g_{11}g_{ij})^2 - (g_{1j}g_{i1})^2 \equiv 0 \ \ \mod \langle f \rangle. \]
Since $f$ is irreducible, $S/\langle f\rangle $ is an integral domain. 
Therefore one of the two factors above must be zero in $S/\langle f \rangle$. 
After changing the sign of $g_{ij}$ if necessary, we can assume that it is the first factor, 
giving that $g_{11}g_{ij}- g_{1j}g_{i1} \in \langle f \rangle$. 
Let $G\in {\rm Sym}_n(S)$ be the symmetric matrix with $(i,j)$th entry $g_{ij} = g_{ji}$. 
We claim that the $2\times 2$ minors of $G$ lie in $\langle f \rangle$.
Note that by construction, for any $i,j,k,l\in [n]$, 
	\[
	g_{11}^2 (g_{ij}g_{kl} - g_{il}g_{kj})  
	= 
	(g_{11} g_{ij})(g_{11} g_{kl}) - (g_{11}g_{il})(g_{11}g_{kj})
	\equiv 
	g_{1i}g_{1j}g_{1k}g_{1l} -  g_{1i}g_{1l}g_{1k}g_{1j}
	= 
	0	\!\! \mod \langle f \rangle.
	\]
Since $f$ is irreducible and $g_{11} = \partial f /\partial x_1$ has smaller degree, $g_{11}$ is not a zero-divisor in $S/\langle f\rangle$. Therefore the minor $g_{ij}g_{kl} - g_{il}g_{kj}$ belongs to $\langle f \rangle$. 

From this it follows that $f^{k-1}$ divides the $k\times k$ minors of $G$ 
for every $2\leq k\leq n$, see \cite[Lemma 4.7]{PV13}. 
In particular, $f^{n-2}$ divides the entries of the adjugate matrix $G^{\rm adj}$.  
Let 
\[
M = (1/f^{n-2})\cdot G^{\rm adj}.
\]  
Also $f^{n-1}$ divides $\det(G)$, and since these both have degree $n(n-1)$, 
there must be some constant $\lambda\in R$ for which $\det(G) = \lambda \cdot f^{n-1}$. 

We can see that $\lambda = 1$ by specializing $x_{\ell}$ to $0$ for all $\ell>n$. 
For any polynomial $h\in S$, let $h({\bf x}, 0)$ denote the 
specialization of $h$ with $x_{\ell} = 0$ for $\ell=n+1, \hdots, m$. 
Then $f({\bf x}, 0)$ equals $ x_1\cdots x_n$ and  $g_{ii}({\bf x}, 0) = \prod_{j\neq i} x_j$. 
Recall that $g_{ij} \in R[x_k: k\neq i,j]$ 
has total degree $n-1$ and degree at most one in each variable $x_k$ for $k\in [n]\backslash\{i,j\}$. Therefore every monomial appearing in $g_{ij}$ 
with non-zero coefficient must involve a variable $x_{\ell}$ for $\ell> n$, giving that $g_{ij}({\bf x}, 0) = 0$. 
Specializing all entries of $G$ to $x_{\ell} = 0$ for $\ell > n$, gives the diagonal matrix 
\[
G({\bf x},0) = {\rm diag}\left(\prod_{j\neq 1} x_j, \hdots, \prod_{j\neq n} x_j\right) = \prod_{j=1}^nx_j \cdot {\rm diag}\left(\frac{1}{x_1}, \hdots, \frac{1}{x_n}\right).
\]
Its determinant is $\prod_{i=1}^n x_i^{n-1}$ which equals $f({\bf x},0)^{n-1}$, showing that $\lambda=1$. 
From this and the equation $G\cdot G^{\rm adj} = \det(G)\cdot {\rm Id}_n$, it follows that 
	\[\det(M) \ \ = \ \ \frac{1}{f^{n(n-2)}}\cdot \det(G^{\rm adj})\ \ =\ \ 
	\frac{1}{f^{n(n-2)}} \det(G)^{n-1}\ \ =\ \   \frac{1}{f^{n(n-2)}} f^{(n-1)^2}\ \ =\ \  f.\]
Note that the entries of $M$ have degree $\leq (n-1)^2 - n(n-2) = 1$, so we can write 
$M$ as $\sum_{i=1}^m x_i M_i$ for some matrices $M_i\in {\rm Sym}_n(R)$. 
To finish the proof it suffices to show that $\sum_{i=1}^n x_i M_i = \diag(x_1, \hdots, x_n)$. 
Indeed, using the previous formula for $G({\bf x},0)$ we see that
	\begin{align*}
		M({\bf x},0) 	& = \frac{1}{\left(\prod_{j=1}^nx_j\right)^{n-2}}\cdot \left(\prod_{j=1}^nx_j \cdot {\rm diag}\left(\frac{1}{x_1}, \hdots, \frac{1}{x_n}\right)\right)^{\rm adj}\\
		& = \frac{\left(\prod_{j=1}^nx_j\right)^{n-1}}{\left(\prod_{j=1}^nx_j\right)^{n-2}}\cdot \left({\rm diag}\left(\frac{1}{x_1}, \hdots, \frac{1}{x_n}\right)\right)^{\rm adj}\\
		& = \prod_{j=1}^n x_j \cdot {\rm diag}\left(\prod_{j\neq 1}\frac{1}{x_j}, \hdots,\prod_{j\neq n}\frac{1}{x_j}\right)\\
		& = {\rm diag}\left(x_1, \hdots, x_n\right).
	\end{align*}

For general $f$, we take a factorization of $f$ into irreducible polynomials $f = \prod_k f_k$. 
By Lemma~\ref{lem:factorDelta}, $\Delta_{ij}(f_k)$ is a square for each $i,j,k$ 
and so by the arguments above, $f_k$ has a determinantal representation of the correct form. 
Taking a block diagonal representation of these representations (and permuting the rows and columns if necessary to reorder $x_1, \hdots, x_n$) gives a determinantal representation for $f$. 
\end{proof}

\begin{example}
For $n=4$ and $R = \Z$, we apply this algorithm to the symmetric quartic
\[
f \ = \  x_1 x_2 x_3 x_4 - (x_1x_2 + x_1x_3 + x_1x_4 + x_2x_3+x_2x_4 + x_3 x_4) + 2(x_1+x_2+x_3+x_4) -3.
\]
For each $i\in [4]$, we take $g_{ii}$ to be $\frac{\partial f}{\partial x_i} = \prod_{j\neq i}x_j - \sum_{j\neq i}x_j +2$.
For every $i\neq j$, we find that $\Delta_{ij}(f)  = \frac{\partial f}{\partial x_i}\frac{\partial f}{\partial x_j} - f \frac{\partial^2 f}{\partial x_i \partial x_j}$ equals $(x_k -1)^2(x_{\ell}-1)^2$ where $\{k, \ell\} = [4]\backslash\{i,j\}$. 
For each $j=2, 3, 4$, we can choose $g_{1j} = (1-x_k)(x_{\ell}-1)$ with $\{k, \ell\} = [4]\backslash\{1,j\}$. 
Then for $\{j,k,\ell\} = \{2,3,4\}$, we find that  
$g_{11}(1-x_1)(x_{j}-1) - g_{1k}g_{1\ell}$
equals $(1-x_j)f$ and so we also take $g_{k\ell} =(1-x_1)(x_{j}-1) $. 
We then construct the $4\times 4$ matrix $G = (g_{ij})_{1\leq i, j \leq 4} = $  
	\[
{\tiny 
\begin{pmatrix} x_2x_3x_4 -x_2-x_3-x_4 + 2& -x_3x_4 + x_3 + x_4 -1& -x_2x_4 + x_2 + x_4 -1 & -x_2x_3 + x_2 + x_3 -1  \\  -x_3x_4 + x_3 + x_4 -1 & x_1x_3x_4 -x_1-x_3-x_4 +2& -x_1x_4 + x_1 + x_4 -1 & -x_1x_3 + x_1 + x_3 -1 \\
		-x_2x_4 + x_2 + x_4 -1 & -x_1x_4 + x_1 + x_4 -1 & x_1x_2x_4 -x_1-x_2-x_4 +2 & -x_1x_2 + x_1 + x_2 -1  \\ -x_2x_3 + x_2 + x_3 -1   & -x_1x_3 + x_1 + x_3 -1 & -x_1x_2 + x_1 + x_2 -1 &x_1x_2x_4 -x_1-x_2-x_4 +2 \end{pmatrix}.
		}
	\]
Note that only the diagonal entries of this matrix have degree three, so if we homogenize all entries to 
have degree three and then set the homogenizing variable equal to zero, the result is the diagonal matrix  
$G({\bf x}, 0)  = x_1x_2x_3x_4{\rm diag}(x_1^{-1}, x_2^{-1},x_3^{-1},x_4^{-1})$ appearing in the proof of Theorem~\ref{thm:DeltaDetRep}($\Leftarrow$). 
Moreover, the $2\times 2$ minors of this matrix are divisible by $f$ and so its $3\times 3$ minors
are divisible by $f^2$. 
Taking the adjugate of $G$ and dividing by $f^2$, we find the desired symmetric matrix whose determinant gives the polynomial $f$:
	\[ M = \frac{1}{f^2} G^{\rm adj} = {\small \begin{pmatrix} x_1 & 1 & 1 & 1 \\ 1 &  x_2 & 1 & 1 \\
		1 &1 & x_3 & 1 \\ 1 & 1 & 1 &  x_4 \end{pmatrix}.} \]
\end{example}

Before moving on to applications to the principal minor problem, 
we remark that the rank of the matrices $M_j$ can be recovered from $f$. 
Here we define the rank of a matrix $M\in {\rm Sym}_n(R)$ to be the maximum $r\in \N$
so that there is a non-zero $r\times r$ minor of $M$. 
Note that the rank of $M$ over $R$ is the same as its rank over the field of fractions of $R$.

\begin{lem}\label{lem:rank} Let $R$ be an integral domain. 
If $f = \det({\rm diag}(x_1,\hdots,x_n) + \sum_{j=n+1}^m x_j M_j)$ where $M_j\in {\rm Sym}_n(R)$, then the rank of $M_j$ equals the degree $f$ in the variable $x_j$. 
\end{lem}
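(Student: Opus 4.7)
The plan is to sandwich $\deg_{x_j}(f)$ between $\operatorname{rank}(M_j)$ on both sides. Since both quantities are preserved under passing from $R$ to its field of fractions $K$ (a minor of $M_j$ is nonzero in $R$ iff nonzero in $K$), I work over $K$. Write $A = B + x_j M_j$ where $B = \diag(x_1,\ldots,x_n) + \sum_{k > n,\, k \neq j} x_k M_k$ is symmetric and does not involve $x_j$. Expanding via multilinearity of the determinant in the columns gives
\[
f \;=\; \det(A) \;=\; \sum_{I \subseteq [n]} x_j^{|I|}\,\det(N_I),
\]
where $N_I$ is the matrix obtained from $B$ by replacing its columns indexed by $i \in I$ with the $i$-th columns of $M_j$. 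When $|I| > \operatorname{rank}(M_j)$, those columns of $M_j$ are linearly dependent over $K$, so $N_I$ is column-rank-deficient and $\det(N_I) = 0$. Hence $\deg_{x_j}(f) \leq \operatorname{rank}(M_j)$.

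For the opposite inequality, I specialize $x_k \mapsto 0$ for every $k > n$ with $k \neq j$, producing $\tilde f = \det(\diag(x_1,\ldots,x_n) + x_j M_j)$; since specialization cannot increase the $x_j$-degree, $\deg_{x_j}(\tilde f) \leq \deg_{x_j}(f)$. Over $K(x_1, \ldots, x_n)$ the diagonal factor is invertible, and using $\det(I + tN) = \sum_k t^k e_k(N)$, where $e_k(N)$ denotes the sum of the $k \times k$ principal minors of $N$, a direct computation yields
\[
\tilde f \;=\; (x_1\cdots x_n)\,\det\bigl(I + x_j \diag(x_1^{-1},\ldots,x_n^{-1}) M_j\bigr)\;=\; \sum_{k=0}^{n} x_j^{k}\sum_{|I|=k} \det\bigl((M_j)_{I,I}\bigr)\,\prod_{i\notin I} x_i,
\]
where $(M_j)_{I,I}$ is the principal submatrix of $M_j$ with row and column set $I$. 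Since the monomials $\prod_{i\notin I} x_i$ are pairwise distinct as $I$ ranges over subsets of a given size, the coefficient of $x_j^k$ in $\tilde f$ is nonzero iff $M_j$ has a nonzero principal $k \times k$ minor. Thus $\deg_{x_j}(\tilde f)$ equals the largest such $k$.

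To close the gap, I invoke the classical fact that for a symmetric matrix $M$ of rank $r$ over any field, some $r \times r$ principal minor of $M$ is nonzero. This is proved by induction on the size via Schur complements. If some diagonal entry $M_{ii} \neq 0$, the Schur complement $\widetilde M$ of $(i,i)$ is symmetric of rank $r - 1$, and by induction admits a nonzero principal $(r-1)\times(r-1)$ minor indexed by some $I' \subseteq [n]\setminus\{i\}$; the identity $\det(M_{\{i\}\cup I',\{i\}\cup I'}) = M_{ii}\cdot \det(\widetilde M_{I',I'})$ then produces a nonzero principal $r \times r$ minor of $M$. If instead every diagonal entry vanishes but $M \neq 0$, any nonzero off-diagonal $M_{ij} = c$ yields a $2 \times 2$ principal submatrix with zero diagonal and off-diagonal $c$, of determinant $-c^2 \neq 0$ (still nonzero in characteristic~2), and one iterates with the Schur complement of this $2 \times 2$ block. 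Applying this to $M_j$ gives $\deg_{x_j}(\tilde f) = \operatorname{rank}(M_j)$, so combining with both bounds yields equality throughout. The main subtle point is the symmetric principal-minor-rank lemma in characteristic~2, where the zero-diagonal case forces the use of a $2 \times 2$ off-diagonal pivot rather than the usual $1 \times 1$ one.
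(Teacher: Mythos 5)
Your proof is correct, and its skeleton matches the paper's: an upper bound $\deg_{x_j}(f)\leq \operatorname{rank}(M_j)$ by multilinearity of the determinant (the paper's ``Laplace expansion''), a reduction to the single auxiliary variable $\tilde f=\det(\operatorname{diag}(x_1,\dots,x_n)+x_jM_j)$ by specialization, and the observation that the coefficient of $x_j^k$ in $\tilde f$ is $\sum_{|I|=k}\det((M_j)_{I,I})\prod_{i\notin I}x_i$, so that $\deg_{x_j}(\tilde f)$ is the size of the largest nonzero \emph{principal} minor. The one place you diverge is the final ingredient: the paper simply cites the ``Principal Minor Theorem'' of Kodiyalam--Lam--Swan to identify the largest nonzero principal minor size of a symmetric matrix with its rank, whereas you prove that fact yourself (over the fraction field, which suffices since rank is unchanged under passage to $\operatorname{frac}(R)$) by a Schur-complement induction, using the quotient identity $\det(M_{\{i\}\cup I',\{i\}\cup I'})=M_{ii}\det(\widetilde M_{I',I'})$ for a $1\times 1$ pivot and falling back on a $2\times 2$ off-diagonal pivot with determinant $-c^2\neq 0$ when the diagonal vanishes, which is exactly the characteristic-$2$ subtlety. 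What each buys: the paper's route is shorter and appeals to a statement valid over general commutative rings; yours is self-contained, makes the characteristic-$2$ case explicit, and uses only the field case actually needed here. Both arguments are sound.
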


\begin{proof}
The bound $\deg_j(f)\leq {\rm rank}(M_j)$ follows from the Laplace expansion of the determinant. 
To see equality, it suffices to take $j=m=n+1$. 
Let $f = \det({\rm diag}(x_1,\hdots,x_n) + yA)$  where $A\in {\rm Sym}_n(R)$. Then $f = \sum_{S\subseteq [n]}A_S{\bf x}^{[n]\backslash S}y^{|S|}$ equals the homogenization of $f_A$. 
From this we see that the degree of $f$ in the variable $y$ equals the 
size of the largest nonzero \emph{principal} minor of $A$. 
By the so-called Principal Minor Theorem \cite[Strong PMT 2.9]{KodiyalamSwan08}, 
this coincides with the size of the largest nonzero minor of $A$, i.e. ${\rm rank}(A)$. 
\end{proof}

Recall that to an element ${\bf a} = (a_S)_{S\subseteq [n]}$ in $R^{2^n}$ we associate the multiaffine polynomial 
\[f_{\bf a} = \sum_{S\subseteq [n]}a_S {\bf x}^{[n]\backslash S}.\] 

\begin{thm}\label{thm:DeltaMinors}
Let $R$ be a unique factorization domain. 
An element ${\bf a} = (a_S)_{S\subseteq [n]}$ in $R^{2^n}$ is in the image of ${\rm Sym}_n(R)$ under
the principal minor map if and only if $a_{\emptyset}=1$ and for every $i,j \in [n]$,
$\Delta_{ij}(f_{\bf a})$ is a square in $R[{\bf x}]$. 
\end{thm}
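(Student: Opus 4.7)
The plan is to reduce this to Theorem~\ref{thm:DeltaDetRep}, which already does all the hard work, via a simple homogenization trick.

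For the forward direction ($\Rightarrow$), suppose $\mathbf{a} = \varphi(A)$ for some $A\in \mathrm{Sym}_n(R)$, so that $f_{\mathbf{a}} = \det(\mathrm{diag}(x_1,\ldots,x_n) + A)$. Setting $M = \mathrm{diag}(x_1,\ldots,x_n) + A \in \mathrm{Sym}_n(R[\mathbf{x}])$, the exact same Dodgson condensation argument used to prove Theorem~\ref{thm:DeltaDetRep}($\Rightarrow$) applies: the principal minors of $M$ correspond to partial derivatives of $f_{\mathbf{a}}$, so by \eqref{eq:dodgson} we get $\Delta_{ij}(f_{\mathbf{a}}) = \det(M(i,j))\cdot\det(M(j,i)) = (\det(M(i,j)))^2$, since $M$ is symmetric. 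In particular, $a_{\emptyset} = \det(A^{\emptyset}\text{-minor}) = 1$ by convention, and every Rayleigh difference is a square in $R[\mathbf{x}]$.

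For the backward direction ($\Leftarrow$), the key idea is to add a single auxiliary homogenizing variable in order to invoke Theorem~\ref{thm:DeltaDetRep}. Suppose $a_\emptyset = 1$ and $\Delta_{ij}(f_{\mathbf{a}})$ is a square in $R[\mathbf{x}]$ for every $i,j\in [n]$. Let $y$ be a new variable, and consider the homogenization
\[
\overline{f_{\mathbf{a}}}(\mathbf{x}, y) = \sum_{S\subseteq [n]} a_S \,\mathbf{x}^{[n]\setminus S}\, y^{|S|} \ \in\ R[x_1,\ldots,x_n, y].
\]
This polynomial is homogeneous of total degree $n$, is multiaffine in the variables $x_1,\ldots,x_n$, and its coefficient of $x_1\cdots x_n$ is $a_\emptyset = 1$. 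By Proposition~\ref{prop:homDelta} (the equivalence (a)$\Leftrightarrow$(b)), each $\Delta_{ij}(\overline{f_{\mathbf{a}}})$ is a square in $R[\mathbf{x},y]$ because each $\Delta_{ij}(f_{\mathbf{a}})$ is a square in $R[\mathbf{x}]$.

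Now apply Theorem~\ref{thm:DeltaDetRep} with $m = n+1$ to the polynomial $\overline{f_{\mathbf{a}}}$, treating $y$ as the single "extra" variable $x_{n+1}$. The theorem produces a symmetric matrix $A\in\mathrm{Sym}_n(R)$ such that
\[
\overline{f_{\mathbf{a}}} = \det\bigl(\mathrm{diag}(x_1,\ldots,x_n) + y A\bigr).
\]
Specializing $y = 1$ yields $f_{\mathbf{a}} = \det(\mathrm{diag}(x_1,\ldots,x_n) + A) = \sum_{S} A_S \,\mathbf{x}^{[n]\setminus S}$. Comparing coefficients of $\mathbf{x}^{[n]\setminus S}$ on both sides gives $A_S = a_S$ for every $S\subseteq [n]$, i.e.\ $\varphi(A) = \mathbf{a}$, as desired.

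The only nontrivial content is in Theorem~\ref{thm:DeltaDetRep}($\Leftarrow$); the passage from the $(n+1)$-variable homogeneous setting back to the $n$-variable multiaffine setting is routine once Proposition~\ref{prop:homDelta} is in hand. So there is no genuine obstacle beyond what was already surmounted in the previous section.
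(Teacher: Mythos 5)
Your proposal is correct and follows essentially the same route as the paper: homogenize $f_{\mathbf{a}}$ with one extra variable, transfer the square condition on the Rayleigh differences via Proposition~\ref{prop:homDelta}, apply Theorem~\ref{thm:DeltaDetRep} with $m=n+1$, and set $y=1$ (your forward direction via Dodgson is just the ($\Rightarrow$) half of Theorem~\ref{thm:DeltaDetRep} applied directly). Incidentally, you cite the appropriate result for the homogenization step, Proposition~\ref{prop:homDelta}, whereas the paper's proof nominally invokes Corollary~\ref{cor:SquareDelInvariance} there.
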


\begin{proof}
By Corollary~\ref{cor:SquareDelInvariance}, 
$\Delta_{ij}(f_{\bf a})$ is a square in $R[{\bf x}]$ if and only if 
$\Delta_{ij}(\overline{f_{\bf a}})$ is a square in $R[{\bf x},y]$.
Furthermore, by Theorem~\ref{thm:DeltaDetRep}, $\Delta_{ij}(\overline{f_{\bf a}})$ is a square in $R[{\bf x}, y]$
 if and only if there exists a symmetric matrix $A\in {\rm Sym}_n(R)$ for which 
 $\overline{f_{\bf a}} =  \det({\rm diag}(x_1,\hdots,x_n) + y A)$. 
\end{proof}

\begin{cor}
Let $R$ be a unique factorization domain. Then the image of ${\rm Sym}_n(R)$ under the principal minor map is invariant under the action of 
$G^n \rtimes S_n$, where $G$ is the subgroup of $\SL_2(R)$ defined by $G=\left\{\begin{pmatrix} 1 & r \\ 0 & 1\end{pmatrix}\mid r \in R\right\}$.
\end{cor}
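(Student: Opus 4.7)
The plan is to reduce to Theorem~\ref{thm:DeltaMinors} and invoke Corollary~\ref{cor:SquareDelInvariance}. Theorem~\ref{thm:DeltaMinors} identifies the image of the principal minor map as the set of ${\bf a} \in R^{2^n}$ satisfying two conditions: (i) $a_\emptyset = 1$, and (ii) $\Delta_{ij}(f_{\bf a})$ is a square in $R[{\bf x}]$ for every $i,j \in [n]$. I will show that each of these conditions is preserved by the action of $G^n \rtimes S_n$, at which point a second application of Theorem~\ref{thm:DeltaMinors} places $\gamma \cdot {\bf a}$ back in the image.

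Condition (ii) is essentially free: Corollary~\ref{cor:SquareDelInvariance} already establishes invariance of the square-Rayleigh-difference condition under the larger group $\SL_2(R)^n \rtimes S_n$, and since $G \leq \SL_2(R)$, the subgroup $G^n \rtimes S_n$ inherits the property immediately.

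The remaining content is condition (i): verifying that the normalization $a_\emptyset = 1$ is preserved. Recall that $a_\emptyset$ equals the coefficient of the top-degree monomial $x_1 \cdots x_n$ in $f_{\bf a}$. Any permutation $\pi \in S_n$ simply reorders the factors in this monomial, so it fixes $a_\emptyset$. Each generator $\gamma_i = \begin{pmatrix} 1 & r_i \\ 0 & 1 \end{pmatrix} \in G$ acts on $f_{\bf a}$ by the translation $x_i \mapsto x_i + r_i$ (the prefactor $c_i x_i + d_i$ in the general formula specializes to $1$), and such translations preserve the leading coefficient of any polynomial. Hence $(\gamma \cdot {\bf a})_\emptyset = 1$ for every $\gamma \in G^n \rtimes S_n$.

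I do not anticipate any serious obstacle beyond this routine bookkeeping. The small subtlety worth highlighting is precisely why one must restrict to $G$ rather than all of $\SL_2(R)$: a typical element of $\SL_2(R)^n$ — for instance the involution $\begin{pmatrix} 0 & -1 \\ 1 & 0 \end{pmatrix}$ appearing in the example of Section~\ref{sec:background} — permutes the coordinate $a_\emptyset$ with others and so can destroy the normalization. The unipotent subgroup $G$ is exactly the part of $\SL_2(R)$ that fixes the leading coefficient of a univariate linear polynomial, which is what is needed to keep $a_\emptyset = 1$.
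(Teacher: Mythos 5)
Your proposal is correct and follows essentially the same route as the paper's proof: reduce to the square-Rayleigh-difference characterization of the image, invoke Corollary~\ref{cor:SquareDelInvariance} for the square condition, and check that the normalization $a_\emptyset=1$ survives because permutations fix the coefficient of $x_1\cdots x_n$ and the unipotent factors act by translations $x_i\mapsto x_i+r_i$. Your closing remark on why one cannot use all of $\SL_2(R)$ matches the paper's discussion immediately after its proof.
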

\begin{proof} 
Let ${\bf a} = (a_S)_{S\subseteq[n]} $ be an element in the image of the principal minor map and let 
$f_{\bf a}$ be the associated multiaffine polynomial. 
Consider ${\bf b} = (b_S)_{S\subseteq [n]}$ with ${\bf b} = \gamma \cdot {\bf a}$ where $\gamma \in G^n\rtimes S_n$. 
Theorem \ref{thm:DeltaDetRep} implies that $a_{\emptyset} = 1$ and for every $i,j,\, \Delta_{ij}(f_{\bf a})$ is a square in $R[{\bf x}]$. Corollary \ref{cor:SquareDelInvariance} implies that for every $i,j,\, \Delta_{ij}(f_{\bf b})$ is a square in $R[{\bf x}]$. Using Theorem~\ref{thm:DeltaDetRep} again, it is enough to show that $b_{\emptyset}$, the coefficient of $x_1\cdots x_n$ in $f_{\bf b}$, equals one. 
It is clear that this coefficient is invariant under the action of $S_n$, and so 
it suffices to take $\gamma = (\gamma_1,\ldots,\gamma_n)\in G^n$ where $\gamma_i = \tiny{\begin{pmatrix}
1 & r_i \\ 0 & 1 \end{pmatrix}}$, for which 
	\[f_{\bf b} = \gamma \cdot f_{\bf a} = f_{\bf a}(x_1 + r_1,\ldots, x_n + r_n).\]
From this, we see that coefficient of $x_1\cdots x_n$ in $f_{\bf b}$ is equal to one. 
\end{proof}

The subgroup $G$ is the maximal subgroup that preserves the leading 
coefficients of polynomials $f_A$.  Consider an element of $\gamma = {\tiny \begin{pmatrix}a  & b\\ c & d\end{pmatrix}}\in \SL_2(R)$ acting on the first coordinate $x_1$ of $f_A$. 
Then  
\[
{\rm coeff}(\gamma\cdot f_A, x_1\cdots x_n)   = {\rm coeff}\left((ax_1+b)\frac{\partial f_{A}}{\partial x_1} + (cx_1 + d)(f_A|_{x_1=0}), x_1\cdots x_n\right) = aA_{\emptyset} + c A_1.
\]
In order to preserve the coefficient of $x_1\cdots x_n$, $\gamma$ must satisfy 
$1= a + c A_1$ for every value of $A_1$. This implies that $c=0$ and $a=1$. 
The condition  $ad - bc = 1$ then implies that $d=1$.

\begin{cor}\label{cor:SL2_scaled}
Let $R$ be a unique factorization domain and let ${\bf a}$ be the vector of principal minors of a matrix in ${\rm Sym}_n(R)$. 
Fix $\gamma\in  \SL_2(R)^n\rtimes S_n$ and let $\lambda$ denote the coefficient of $\prod_{i=1}^nx_i$ in $\gamma \cdot f_{\bf a}$. If  $\lambda \neq 0$, then for some $A\in {\rm Sym}_n(R)$,
\[
\gamma\cdot f_{\bf a} = \lambda \cdot \det\left({\rm diag}(x_1, \hdots, x_n) + \frac{1}{\lambda} A\right).
\] 
That is, $\frac{1}{\lambda} \gamma \cdot {\bf a}$  belongs the image of the principal minor map over $\frac{1}{\lambda}R$. 
\end{cor}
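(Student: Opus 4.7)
The plan is to reduce to Theorem~\ref{thm:DeltaMinors} after renormalizing $\gamma\cdot f_{\mathbf{a}}$ so that its coefficient of $\prod_i x_i$ becomes $1$. We already know from Corollary~\ref{cor:SquareDelInvariance} that each Rayleigh difference $\Delta_{ij}(\gamma\cdot f_{\mathbf{a}})$ is a square in $R[\mathbf{x}]$, but Theorem~\ref{thm:DeltaMinors} does not apply directly because the leading coefficient is $\lambda$ rather than $1$. So the key step is to cook up a related multiaffine polynomial $\tilde f\in R[\mathbf{x}]$ with leading coefficient $1$ whose Rayleigh differences also remain squares in $R[\mathbf{x}]$.

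My candidate is $\tilde f(\mathbf{x}) := \lambda^{n-1}(\gamma\cdot f_{\mathbf{a}})(\mathbf{x}/\lambda)$. Writing $\gamma\cdot f_{\mathbf{a}} = \sum_{S\subseteq[n]} b_S \prod_{i\notin S} x_i$ with $b_\emptyset = \lambda$ and every $b_S\in R$, direct substitution gives $\tilde f = \sum_S b_S \lambda^{|S|-1}\prod_{i\notin S} x_i$. The exponent $|S|-1$ is negative only when $S=\emptyset$, in which case $b_\emptyset \lambda^{-1} = 1$, so $\tilde f$ is multiaffine in $R[\mathbf{x}]$ with leading coefficient $1$. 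By the chain rule,
\[
\Delta_{ij}(\tilde f)(\mathbf{x}) \;=\; \lambda^{2n-4}\,\Delta_{ij}(\gamma\cdot f_{\mathbf{a}})(\mathbf{x}/\lambda).
\]
Writing $\Delta_{ij}(\gamma\cdot f_{\mathbf{a}}) = h_{ij}^2$ for some $h_{ij}\in R[\mathbf{x}]$, unique factorization together with the fact that $\Delta_{ij}(\gamma\cdot f_{\mathbf{a}})$ is independent of $x_i,x_j$ and multiquadratic in the remaining $n-2$ variables forces $h_{ij}$ to be multiaffine in those variables, hence of total degree at most $n-2$. Consequently $\lambda^{n-2}h_{ij}(\mathbf{x}/\lambda)\in R[\mathbf{x}]$, and $\Delta_{ij}(\tilde f)=\bigl(\lambda^{n-2}h_{ij}(\mathbf{x}/\lambda)\bigr)^2$ is a square in $R[\mathbf{x}]$.

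Now Theorem~\ref{thm:DeltaMinors} produces $A\in{\rm Sym}_n(R)$ with $\tilde f = \det\bigl({\rm diag}(x_1,\ldots,x_n) + A\bigr)$. Unwinding the substitution using ${\rm diag}(\lambda x_i) + A = \lambda({\rm diag}(x_i) + \tfrac{1}{\lambda}A)$ and $\det(\lambda M) = \lambda^n \det(M)$ then gives
\[
\gamma\cdot f_{\mathbf{a}}(\mathbf{x}) \;=\; \lambda^{1-n}\tilde f(\lambda\mathbf{x}) \;=\; \lambda\,\det\bigl({\rm diag}(x_1,\ldots,x_n) + \tfrac{1}{\lambda}A\bigr),
\]
which is the desired identity. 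The final sentence of the corollary follows by matching coefficients of $\prod_{i\notin S} x_i$: the right-hand side contributes $\lambda\cdot(\tfrac{1}{\lambda}A)_S$, so $\tfrac{1}{\lambda}(\gamma\cdot\mathbf{a})_S = (\tfrac{1}{\lambda}A)_S$, realizing $\tfrac{1}{\lambda}\gamma\cdot\mathbf{a}$ as the principal minor vector of $\tfrac{1}{\lambda}A\in{\rm Sym}_n(\tfrac{1}{\lambda}R)$. The main obstacle I expect is the bookkeeping in the rescaling step, specifically verifying that the substitution $\mathbf{x}\mapsto\mathbf{x}/\lambda$ combined with the prefactors $\lambda^{n-1}$ and $\lambda^{n-2}$ exactly clears all denominators; this ultimately rests on the total-degree bound $n-2$ for $h_{ij}$.
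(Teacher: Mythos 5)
Your argument is correct, and it takes a genuinely different route from the paper's. The paper works directly with ${\bf b}=\frac{1}{\lambda}\gamma\cdot{\bf a}$: it applies Theorem~\ref{thm:DeltaDetRep} over the localization $R(\frac{1}{\lambda})$ to get a representing matrix $B$ there, and then the real work is an a posteriori integrality argument, using $z^2=b_ib_j-b_{ij}$ and a minimal-exponent divisibility trick in the UFD, to show that $\lambda B$ has entries in $R$. You instead rescale the variables: $\tilde f(\mathbf{x})=\lambda^{n-1}(\gamma\cdot f_{\bf a})(\mathbf{x}/\lambda)$ has leading coefficient $1$ and coefficients in $R$, and the identity $\Delta_{ij}(\tilde f)=\lambda^{2n-4}\Delta_{ij}(\gamma\cdot f_{\bf a})(\mathbf{x}/\lambda)$ together with the degree bound $\deg h_{ij}\leq n-2$ (which holds because $h_{ij}^2$ is multiquadratic in the variables other than $x_i,x_j$ and independent of those two --- note this only needs $R$ to be a domain, not a UFD) shows its Rayleigh differences are squares in $R[\mathbf{x}]$ itself; Theorem~\ref{thm:DeltaMinors} over $R$ then gives $A\in{\rm Sym}_n(R)$, and un-rescaling yields the stated identity. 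What your approach buys is that you never leave $R$: there is no need to invoke the (implicit in the paper) fact that $R[\frac{1}{\lambda}]$ is again a UFD, and no separate integrality lemma for the off-diagonal entries, since integrality is built in from the start via the denominator-clearing prefactors. What the paper's route buys is brevity at the level of bookkeeping --- it scales the coefficient vector rather than the variables, so no chain-rule or degree-count verification is needed --- at the cost of the localization step and the divisibility argument. One tiny point of hygiene in your write-up: before citing Corollary~\ref{cor:SquareDelInvariance} you should note explicitly that $\Delta_{ij}(f_{\bf a})$ is a square because ${\bf a}$ is a vector of principal minors (the forward direction of Theorem~\ref{thm:DeltaMinors}, i.e.\ Dodgson condensation); with that sentence added, the proof is complete.
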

\begin{proof}
	Let ${\bf b} = \frac{1}{\lambda} \gamma \cdot {\bf a}$ and let $f_{\bf b} $ be the multiaffine polynomial 
	associated to ${\bf b}$. 
	Then ${\rm coeff}\left(f_{\bf b} , \prod_{i} x_i \right)= \frac{1}{\lambda} \lambda = 1$ 
	and by Corollary~\ref{cor:SquareDelInvariance}, 
	$\Delta_{ij}f_{\bf b} = \frac{1}{\lambda^2}\Delta_{ij}(\gamma\cdot f_{\bf a})$ is a square for every $i,j$. 
	Hence, by Theorem~\ref{thm:DeltaDetRep}, there exists a symmetric matrix $B$ with entries in 
	$R(\frac{1}{\lambda})$ with
	\[
	f_{\bf b} = \det\left({\rm diag}(x_1, \hdots, x_n) + B\right).
	\]
	We claim that $\lambda B$ has entries in $R$. To see this, note that 
	${\bf b} = \frac{1}{\lambda} \gamma \cdot {\bf a}$ is the vector of principal minors of $B$ 
	and that the entries of $\gamma \cdot {\bf a}$ belong to $R$. So all principal minors of $B$ 
	belong to $\lambda^{-1}R$. This immediately shows that the diagonal elements of $\lambda B$ 
	belong to $R$. 
	
	For the off-diagonal elements, fix $i\neq j\in [n]$ and let $z$ denote the $(i,j)$th entry of $B$. 
	Then $b_i b_j - b_{ij} = z^2$, where ${\bf b} = (b_S)_{S\subseteq [n]}$, 
	which implies that $\lambda^2z^2 = (\lambda b_i)(\lambda b_j) -\lambda^2b_{ij}  \in R$. 
	By construction, $z\in R(1/\lambda)$ so we can take the minimal $m\in \N$ for which 
	$\lambda^mz \in R$. So $\lambda^mz = r\in R$ where either $m=0$ or $\lambda$ does not divide $r$. 
	Then 
	$\lambda^{2m-2}(\lambda^2z^2) = (\lambda^mz)^2= r^2$.
	Since $\lambda^2z^2\in R$, we see that $\lambda^{2m-2}$ divides $r^2$.
	If $m>1$, this contradicts the assumption that $\lambda$ does not divide $r$. 
	Therefore $A = \lambda B \in {\rm Sym}_n(R)$, as desired.
\end{proof}

\begin{example}
	For $R = \Z$ and $n=2$ consider the matrix $A = \tiny{\begin{pmatrix} 2 & 1 \\ 1 & 1 \end{pmatrix}}$ and its 
	vector of principal minors ${\bf a}=(1,2,1,1)$, giving 	$f_{\bf a} = x_1 x_2 + x_1 + 2x_2 + 1$. 
	The image of $f_{\bf a}$ under the action of  $\gamma = \left(\tiny{\begin{pmatrix} 2 & -1 \\ 1 & 0 \end{pmatrix}}, 
						\tiny{\begin{pmatrix}	1 & 0 \\ 0 & 1 \end{pmatrix}}\right) \in \SL_2(R)^2$  is
							\[\gamma \cdot f_{\bf a} = 4 x_1 x_2 + 3x_1 - x_2 -1.\]					
	Since $4\neq 1$, the vector $(4,-1,3,-1)$ is not the vector of principal minors of any symmetric matrix over $\Z$. 
	However, as promised by Corollary~\ref{cor:SL2_scaled}, the vector
	 $\frac{1}{4}\gamma \cdot {\bf a} = \left(1,\frac{-1}{4},\frac{3}{4},\frac{-1}{4}\right)$ is the vector of principal minors of 
	 the matrix $B = \frac{1}{4}{\tiny{\begin{pmatrix} -1& 1 \\ 1 & 3 \end{pmatrix}}}$ over $\frac{1}{4}\Z$.		
\end{example}

\begin{cor}
Let $R=\F$ be an infinite field. The Zariski closure of the image of the principal minor map in $\P^{2^n-1}(\F)$ is invariant under the action of $\SL_2(\F)^n \rtimes S_n$.
\end{cor}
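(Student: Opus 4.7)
The plan is to leverage Corollary~\ref{cor:SL2_scaled}, which already treats the generic case where a certain leading coefficient is nonzero, and to promote this from the image to its closure by a Zariski density argument. Let $V = \varphi({\rm Sym}_n(\F))\subseteq \F^{2^n}$ and write $[V]\subseteq \P^{2^n-1}(\F)$ for its image under the inclusion of the affine chart $\{a_{\emptyset}=1\}$. Since ${\rm Sym}_n(\F)\cong \F^{n(n+1)/2}$ is irreducible and $\varphi$ is a morphism, $V$, $[V]$, and the closure $\overline{[V]}$ are all irreducible. Each $\gamma\in \SL_2(\F)^n\rtimes S_n$ acts on $\F^{2^n}$ as an invertible linear map, hence on $\P^{2^n-1}(\F)$ as a Zariski homeomorphism, which in particular commutes with taking closures. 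Because $\gamma^{-1}$ again lies in the group, it will suffice to prove the one-sided containment $\gamma([V])\subseteq \overline{[V]}$ for every $\gamma$; applying this to both $\gamma$ and $\gamma^{-1}$ then forces $\gamma(\overline{[V]}) = \overline{[V]}$.

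Fix $\gamma$ and set $\lambda({\bf a}) = {\rm coeff}(\gamma\cdot f_{\bf a},\, x_1\cdots x_n)$, a linear form on $\F^{2^n}$. I first observe that $\lambda$ is nonzero on $\F^{2^n}$: the map ${\bf a}\mapsto \gamma\cdot f_{\bf a}$ is a linear automorphism of $\F[{\bf x}]_{\leq {\bf 1}}$, and the coefficient of $x_1\cdots x_n$ is a nontrivial functional on that space. On $U = \{{\bf a}\in V : \lambda({\bf a})\neq 0\}$, Corollary~\ref{cor:SL2_scaled} gives $\tfrac{1}{\lambda({\bf a})}\gamma\cdot {\bf a}\in V$, and this vector represents the same projective point as $\gamma\cdot {\bf a}$, so $[\gamma\cdot {\bf a}]\in [V]$. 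Once I verify that $U$ is nonempty, irreducibility of $V$ makes $U$ Zariski-dense in $V$, and continuity of $\gamma$ yields
\[
\gamma(\overline{[V]}) \ =\  \overline{\gamma([V])} \ =\  \overline{\gamma([U])} \ \subseteq\  \overline{[V]},
\]
as required.

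The one nontrivial step, and the place I expect the main obstacle, is verifying $U\neq\emptyset$, equivalently that $\lambda\not\equiv 0$ on $V$. I would prove the stronger statement that $V$ lies in no hyperplane of $\F^{2^n}$ by testing against diagonal matrices: for $D = \diag(d_1,\hdots,d_n)$ one has $D_S = \prod_{i\in S}d_i$, so $(\prod_{i\in S}d_i)_{S\subseteq[n]}\in V$ for every $d\in \F^n$. A linear dependence $\sum_S c_Sa_S = 0$ holding on $V$ would then give the polynomial identity $\sum_S c_S\prod_{i\in S}d_i\equiv 0$ on $\F^n$; since $\F$ is infinite, this forces $\sum_Sc_S{\bf x}^S = 0$ as a polynomial, and linear independence of the distinct squarefree monomials gives $c_S=0$ for all $S$. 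This is the one place where the infinite-field hypothesis is essential. Combined with the previous paragraph, this completes the proof.
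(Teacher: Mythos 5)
Your proof is correct, and it leans on the same key input as the paper --- Corollary~\ref{cor:SL2_scaled} for the generic case, followed by a Zariski-density/continuity argument --- but you organize the density step differently. The paper fixes a matrix $A$, reduces via the $S_n$-action to a single $\SL_2(\F)$ factor, and shows that the set of $\gamma\in\SL_2(\F)$ with nonvanishing leading coefficient is dense in $\SL_2(\F)$ (using that $\SL_2$ is rational and $\F$ is infinite), so that the rational map $\gamma\mapsto\gamma\cdot\varphi(A)$ lands in the closure for every $\gamma$. You instead fix an arbitrary $\gamma$ in the full group $\SL_2(\F)^n\rtimes S_n$ and make the point ${\bf a}$ generic: the locus $U\subseteq V$ where $\lambda({\bf a})={\rm coeff}(\gamma\cdot f_{\bf a},x_1\cdots x_n)\neq 0$ is nonempty because $\lambda$ is a nonzero linear form and $V$ spans $\F^{2^n}$ (your diagonal-matrix argument), hence dense by irreducibility of $V$, and then $\gamma(\overline{[V]})\subseteq\overline{[V]}$ follows by continuity, with equality from applying the same to $\gamma^{-1}$. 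What each approach buys: yours handles the whole group element at once with no reduction to one coordinate and no appeal to rationality of $\SL_2$, at the cost of needing irreducibility and non-degeneracy of the image $V$; the paper's argument is pointwise in $A$ and needs nothing about the geometry of the image itself. One small caveat: the infinite-field hypothesis is used not only in your Vandermonde-type vanishing step but also to know that ${\rm Sym}_n(\F)\cong\F^{n(n+1)/2}$ is irreducible for the Zariski topology (over a finite field that topology is discrete), so it is not quite ``the one place'' where infiniteness enters; under the stated hypothesis this is harmless.
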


\begin{proof} The image is immediately invariant under the action of $S_n$, so it 
suffices to show invariance under $\SL_2(\F)^n$.  Using the action of $S_n$, it suffices to 
show this by acting with $\SL_2(\F)$ on the first coordinate. 
Let $A\in {\rm Sym}_n(\F)$, giving a point $\varphi(A) = (A_S)_{S}$ in the image of the principal minor map. 
Consider the open subset 
\begin{align*} 
U & = \left\{\gamma \in \SL_2(\F) : {\rm coeff}( \gamma \cdot f_A, x_1\cdots x_n) \neq 0\right\} \\
&= \left\{\begin{pmatrix}a & b\\ c & d\end{pmatrix} \in \F^{2\times 2}: ad-bc=1 \ \text{ and } \
aA_{\emptyset}+  cA_1\neq 0\right\}.
\end{align*}
By Corollary~\ref{cor:SL2_scaled}, for every $\gamma\in U$, 
$\gamma\cdot \varphi(A) $ belongs to the image of the principal minor map, up to scaling.  
The parametrization $(a,b,c)\dashmapsto (a,b,c,(1+bc)/a)$ shows that 
$\SL_2(\F)$ is a rational variety over $\F$.
Since $\F$ is infinite, the set of $(a,b,c)\in \F^3$ such that $a\neq 0$ and $aA_{\emptyset}+  cA_1\neq 0$ 
is Zariski-dense in $\F^3$. It follows that $U$ is Zariski-dense in $\SL_2(\F)$. 
Since $\gamma \mapsto \gamma\cdot \varphi(A)$ defines a rational map 
$\SL_2(\F) \to \P^{2^n-1}(\F)$, it follows that for 
any $\gamma\in \SL_2(\F)$, $\gamma\cdot \varphi(A) $ 
belongs to the Zariski-closure of the image of $\varphi$ in $\P^{2^n-1}(\F)$.
\end{proof}

\section{Defining the set of multiquadratic squares}\label{sec:multiQsquares}

The polynomials $\Delta_{ij}(f)$ appearing in Theorem~\ref{thm:DeltaMinors} have degree $\leq$ two in
each variable. In order to make use of this characterization, in this section we 
find algebraic conditions characterizing the set of squares in $R[{\bf x}]_{\leq {\bf 2}}$. 
In fact, to simplify the notation used in the arguments below we consider the multihomogenezations, 
as in Proposition~\ref{prop:homDelta}, and characterize the set of squares in $R[{\bf x},{\bf y}]_{{\bf 2}}$.

In a slight abuse of notation, we define $\P^1(R)$ to be the following subset of $R^2$: 
\[\P^1(R) = \{(r,1) : r\in R\}\cup \{(1,0)\}.\] 

\begin{lem}\label{lem:evalZero}
	Let $g\in R[{\bf  x}, {\bf y}]_{{\bf d}}$ where ${\bf d} = (d_1, \hdots, d_n)\in \N^n$.  
	Let $P_i\subseteq \P^1(R)$ be a set of size $d_i+1$. 
	Then $g$ is the zero polynomial if and only if $g(p) = 0$ for all $p\in P_1\times \cdots \times P_n$. 
\end{lem}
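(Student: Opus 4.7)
The forward direction is trivial, so the content is in the converse: if $g$ vanishes on $P_1 \times \cdots \times P_n$, then $g$ is identically zero. The plan is to induct on $n$, using as the driving fact that $R$, being a UFD, is an integral domain, so a nonzero polynomial in $R[t]$ of degree $d$ has at most $d$ roots in $R$.

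For the base case $n=1$, write $g = \sum_{i=0}^{d_1} c_i x_1^i y_1^{d_1-i}$ with $c_i \in R$. I split on whether $(1,0) \in P_1$. If $(1,0) \in P_1$, then $g(1,0) = c_{d_1} = 0$, so $g = y_1 h$ with $h$ homogeneous of degree $d_1-1$. The remaining $d_1$ points of $P_1$ are of the form $(r,1)$ with distinct $r \in R$, and on each we get $h(r,1) = 0$. Thus $h(x_1,1) \in R[x_1]$ has degree $\le d_1-1$ and at least $d_1$ distinct roots in the integral domain $R$, so $h(x_1,1)$ is the zero polynomial; since $h$ is homogeneous this forces $h = 0$, hence $g=0$. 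If instead $(1,0)\notin P_1$, all $d_1+1$ points are of the form $(r_i,1)$ with distinct $r_i\in R$, so $g(x_1,1)\in R[x_1]$ has degree $\le d_1$ with $d_1+1$ distinct roots, whence $g(x_1,1)=0$, and homogeneity again forces $g=0$.

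For the inductive step, let $\mathbf{x}' = (x_1,\ldots,x_{n-1})$, $\mathbf{y}' = (y_1,\ldots,y_{n-1})$, and $\mathbf{d}' = (d_1,\ldots,d_{n-1})$. I would expand
\[
g = \sum_{i=0}^{d_n} x_n^i y_n^{d_n - i} \, h_i(\mathbf{x}',\mathbf{y}'),
\]
where each $h_i \in R[\mathbf{x}',\mathbf{y}']_{\mathbf{d}'}$. Fix any $p' \in P_1 \times \cdots \times P_{n-1}$. Then $g(p',x_n,y_n) = \sum_{i} h_i(p') x_n^i y_n^{d_n-i}$ is a homogeneous polynomial of degree $d_n$ in $R[x_n,y_n]$ that vanishes on the $d_n+1$ points of $P_n$. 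By the base case, this polynomial is zero, so $h_i(p') = 0$ for every $i$. Since this holds for every $p'\in P_1 \times \cdots \times P_{n-1}$, the inductive hypothesis (applied to each $h_i$) gives $h_i = 0$, and hence $g = 0$.

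The only subtlety, and thus the main thing to be careful with, is the base case: one must handle the point $(1,0) \in \mathbb{P}^1(R)$ separately from the affine points $(r,1)$, since the reduction to a single-variable polynomial argument in $R[x_1]$ depends on how many of the prescribed vanishing conditions sit at infinity. Everything else is a standard induction on the number of variables.
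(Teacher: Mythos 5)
Your proof is correct and follows essentially the same route as the paper: induction on the number of variable pairs, reduced to the fact that a binary form of degree $d$ vanishing at $d+1$ points of $\P^1(R)$ must be zero (using that $R$ is an integral domain). The only cosmetic differences are that the paper's base case argues via divisibility by the linear forms $bx-ay$ rather than your dehomogenization-plus-root-count with a case split at $(1,0)$, and the paper specializes the last coordinate pair first and then views $g$ as a binary form over $R[{\bf x},{\bf y}]$, whereas you specialize the first $n-1$ pairs and apply the inductive hypothesis to the coefficient forms.
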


\begin{proof}
	We prove this by induction on $n$. 
	Note that for $n=1$, a polynomial $g\in R[x,y]_{d}$ vanishes at $(a,b)\in \P^1(R)$ 
	if and only if $bx-ay$ divides $g$. 
	Therefore a bivariate form $g\in R[x,y]_{d}$ cannot have more than $d$ roots in $\P^1(R)$. 
	Now suppose that $n\geq  1$.  Fix $(a,b)\in P_{n+1}$. 
	The polynomial $g({\bf x}, a,{\bf y}, b)$ vanishes on 
	$P_1\times \cdots \times P_{n}$ and is therefore 
	identically zero by induction. 
	This means that, considered as a bivariate form in $x_{n+1}, y_{n+1}$ over the ring $R[{\bf x},  {\bf y}]$, 
	$g$ vanishes at the $d_{n+1}+1$ points in $P_{n+1}$ and it is therefore identically zero in 
	$R[{\bf x}, {\bf y}][x_{n+1},y_{n+1}] = R[{\bf x}, x_{n+1}, {\bf y}, y_{n+1}]$.
\end{proof}

\begin{lem}\label{lem:factorSquare}
	A polynomial $g({\bf x},s,{\bf y},t) = g_2({\bf x},{\bf y})s^2 + g_1({\bf x},{\bf y}) s t + g_0({\bf x},{\bf y})t^2  \in R[{\bf x},s,{\bf y},t]_{{\bf 2}}$ 
	is a square in $R[{\bf x},s,{\bf y},t]$ if and only if 
	$g_0({\bf x},{\bf y})$ and $g_2({\bf x},{\bf y})$ are squares in $R[{\bf x},{\bf y}]$ and ${\rm Discr}_{(s,t)}(g) =0$.
\end{lem}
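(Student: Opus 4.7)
The plan is to prove both directions directly, treating $g$ as a quadratic form in $(s,t)$ with coefficients in the UFD $S = R[{\bf x},{\bf y}]$.

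For the forward direction, suppose $g = h^2$ for some $h \in R[{\bf x},s,{\bf y},t]$. Since $g$ is homogeneous of degree $2$ in each pair $(x_i,y_i)$ and in $(s,t)$, and $R[{\bf x},s,{\bf y},t]$ is an integral domain, $h$ must be homogeneous of degree $1$ in each such pair. Writing $h = h_1({\bf x},{\bf y})s + h_0({\bf x},{\bf y})t$, expanding $h^2$ and matching coefficients of $s^2$, $st$, $t^2$ gives $g_2 = h_1^2$, $g_0 = h_0^2$, and $g_1 = 2h_0h_1$. Then ${\rm Discr}_{(s,t)}(g) = g_1^2 - 4g_0g_2 = 4h_0^2h_1^2 - 4h_0^2h_1^2 = 0$, which works in any characteristic.

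For the reverse direction, suppose $g_0 = h_0^2$ and $g_2 = h_1^2$ with $h_0, h_1 \in S$ and that $g_1^2 = 4g_0g_2 = (2h_0h_1)^2$ in $S$. Since $S$ is a UFD, hence an integral domain, the identity $(g_1 - 2h_0h_1)(g_1 + 2h_0h_1) = 0$ forces $g_1 = \varepsilon \cdot 2h_0h_1$ for some sign $\varepsilon \in \{\pm 1\}$. After replacing $h_0$ by $\varepsilon h_0$ if necessary, we have $g_1 = 2h_0h_1$, and then
\[
g = h_1^2 s^2 + 2h_0h_1\, st + h_0^2 t^2 = (h_1 s + h_0 t)^2,
\]
so $g$ is a square in $R[{\bf x},s,{\bf y},t]$.

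The characteristic-two case is subsumed by the same argument: the hypothesis ${\rm Discr}_{(s,t)}(g)=0$ forces $g_1^2 = 0$, hence $g_1 = 0$, and then $g = h_1^2 s^2 + h_0^2 t^2 = (h_1 s + h_0 t)^2$ since cross terms vanish. There is no real obstacle here; the only subtle point is the sign ambiguity coming from extracting a square root in $S$, which is handled cleanly by the UFD property (equivalently, integrality of $S$).
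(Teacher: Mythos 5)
Your proof is correct and takes essentially the same route as the paper: the heart of both arguments is the difference-of-squares factorization $(g_1 - 2h_0h_1)(g_1 + 2h_0h_1) = 0$ in the integral domain $R[{\bf x},{\bf y}]$, followed by a sign change on $h_0$ to write $g = (h_1 s + h_0 t)^2$. Your additional justification that any square root of $g$ must be homogeneous of degree one in $(s,t)$ and in each pair $(x_i,y_i)$ is a point the paper leaves implicit, and your observation that characteristic two requires no separate treatment matches the paper's argument as well.
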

\begin{proof}
	Suppose that  $g_2 = (h_2)^2$ and $g_0= (h_0)^2$, where $h_2, h_0\in R[{\bf x},{\bf y}]$ and 
	\[
	{\rm Discr}_{(s,t)}(g) =  g_1^2 - 4 g_0 g_2 = 0  \ \ \text{ in } \ \ R[{\bf x},{\bf y}]. 
	\] 
	Then $4g_0g_2= (2h_0h_2)^2 = g_1^2 $, giving that 
	$ (2h_0h_2)^2 - g_1^2 = (2h_0h_2- g_1)(2h_0h_2+ g_1)=0$ in $R[{\bf x},{\bf y}]$.  
	It follows that $g_1 = \pm 2h_0h_2$. Changing the sign of $h_0$ if necessary, we can assume that $g_1 = 2h_0h_2$.
	Then 
	\[
	g = (h_2)^2s^2 +2 h_0h_2  s t + (h_0)^2t^2
	= (s h_2 + th_0)^2
	\]
	is a square, as desired. 
	Conversely, if $g =(s h_2 + th_0)^2$, we see that $g_0= (h_0)^2$ and $g_2 = (h_2)^2$ and 
	${\rm Discr}_{(s,t)}(g)=0$. 
\end{proof}

\begin{thm}\label{thm:square} 
	Let $R$ be a unique factorization domain with $|R|\neq 3$. 
	A multiquadratic polynomial 
	$g = \sum_{\alpha \in \{0,1,2\}^n}c_{\alpha}{\bf x}^{\alpha}{\bf y}^{{\bf 2}-\alpha}$
	is a square, i.e. $g = h^2$ with $h\in R[{\bf x}, {\bf y}]_{{\bf 1}}$,
	if and only if  for every $\beta\in \{0,1\}^n$, $c_{2\beta}$ is a square in $R$ 
	and $ {\bf c} = (c_{\alpha})_{\alpha \in \{0,1,2\}^n}$ satisfies 
	the images of 
	\begin{equation}\label{eq:squareC}
		(c_{(1,{\bf 0})})^2-4c_{(0,{\bf 0})} c_{(2,{\bf 0})}=0
	\end{equation}
	under the action of $\SL_2(R')^n \rtimes S_n$ where $R'$ is any nontrivial subring of $R$ with $1_{R} = 1_{R'}$ 
	 and size $\geq 4$ whenever ${\rm char}(R)\neq 2$.
\end{thm}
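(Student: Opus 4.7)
The plan is induction on $n$. In the forward direction, if $g=h^2$ with $h\in R[{\bf x},{\bf y}]_{{\bf 1}}$ then each $c_{2\beta}$ is the square of the coefficient of ${\bf x}^{\beta}{\bf y}^{{\bf 1}-\beta}$ in $h$, while for every $\gamma\in\SL_2(R')^n\rtimes S_n$ the polynomial $\gamma\cdot g=(\gamma\cdot h)^2$ is again a square, so ${\rm Discr}_{(x_1,y_1)}(\gamma\cdot g)\equiv 0$ in $R[{\bf x}',{\bf y}']$; evaluating at $(x_i,y_i)=(0,1)$ for $i\geq 2$ yields $(\gamma\cdot P)({\bf c})=0$, where $P=(c_{(1,{\bf 0})})^2-4c_{(0,{\bf 0})}c_{(2,{\bf 0})}$. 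For the reverse direction, the base case $n=1$ is exactly Lemma~\ref{lem:factorSquare}.

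For the inductive step, I would apply Lemma~\ref{lem:factorSquare} along $(s,t)=(x_n,y_n)$, writing $g=g_2 x_n^2+g_1 x_ny_n+g_0y_n^2$ with $g_0,g_1,g_2\in R[{\bf x}',{\bf y}']_{{\bf 2}}$. It then suffices to show (i) $g_0$ and $g_2$ are squares in $R[{\bf x}',{\bf y}']$, and (ii) $D:=g_1^2-4g_0g_2=0$ in $R[{\bf x}',{\bf y}']$. For (ii), since $D\in R[{\bf x}',{\bf y}']_{{\bf 2}}$, by Lemma~\ref{lem:evalZero} it is enough to verify that $D$ vanishes on $\P^1(R')^{n-1}$; each such evaluation $D((a_i,b_i)_{i<n})$ is precisely $(\gamma\cdot P)({\bf c})$ for a $\gamma\in\SL_2(R')^n\rtimes S_n$ obtained by composing the transposition $(1,n)\in S_n$ (which puts $\text{Discr}_{(x_n,y_n)}$ in the role of $\text{Discr}_{(x_1,y_1)}$) with matrices $\gamma_i\in\SL_2(R')$ satisfying $\gamma_i\cdot(0,1)^{\top}=(a_i,b_i)^{\top}$---take $\left(\begin{smallmatrix}1 & r\\0 & 1\end{smallmatrix}\right)$ for $(r,1)$ and $\left(\begin{smallmatrix}0 & 1\\-1 & 0\end{smallmatrix}\right)$ for $(1,0)$. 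For (i), I would invoke the inductive hypothesis on $g_0$ and $g_2$: the corner coefficients $c^{(0)}_{2\beta'}=c_{2(\beta',0)}$ and $c^{(2)}_{2\beta'}=c_{2(\beta',1)}$ are squares directly by hypothesis, and the orbit conditions on ${\bf c}^{(0)}$ (resp.\ ${\bf c}^{(2)}$) translate into orbit conditions on ${\bf c}$ by using $\gamma\in\SL_2(R')^n\rtimes S_n$ whose $n$-th $\SL_2$-component is the identity (resp.\ $\left(\begin{smallmatrix}0 & 1\\-1 & 0\end{smallmatrix}\right)$, sending $(0,1)\mapsto(1,0)$).

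The main obstacle is the careful bookkeeping of how the group actions, the discriminant operation, and partial evaluations at $(x_i,y_i)=(0,1)$ intertwine. The key commutation facts required are: (a) the $\SL_2$-action on coordinate $k$ preserves $\text{Discr}_{(x_k,y_k)}$ since $\det(\gamma_k)^2=1$; (b) the $\SL_2$-action on coordinate $i\neq k$ commutes with $\text{Discr}_{(x_k,y_k)}$ and with evaluation at other coordinates; and (c) $\gamma\cdot(0,1)^{\top}$ covers $\P^1(R')$ as $\gamma$ ranges over $\SL_2(R')$. The size hypothesis guarantees $|\P^1(R')|\geq 3$ so Lemma~\ref{lem:evalZero} applies throughout the induction; in characteristic $2$ the polynomial $P$ degenerates to the perfect square $(c_{(1,{\bf 0})})^2$, and since $R$ is a UFD (hence reduced) the orbit condition reduces to vanishing of the linear form $c_{(1,{\bf 0})}$ and its images, which is still detected by evaluation on the three points of $\P^1(\F_2)$.
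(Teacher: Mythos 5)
Your overall strategy --- forward direction by equivariance of squares, reverse direction by induction on $n$ using Lemma~\ref{lem:factorSquare} for the splitting and Lemma~\ref{lem:evalZero} for a grid-evaluation argument, with the group elements $\left(\begin{smallmatrix}1 & r\\0 & 1\end{smallmatrix}\right)$, $\left(\begin{smallmatrix}0 & 1\\-1 & 0\end{smallmatrix}\right)$ realizing the points of $\P^1(R')$ --- is the same as the paper's, and your step (i) and your identification of the evaluations of the discriminant with $(\gamma\cdot P)({\bf c})$ are correct. The gap is in the degree bookkeeping of step (ii). Writing $g=g_2x_n^2+g_1x_ny_n+g_0y_n^2$, the cofactors $g_0,g_1,g_2$ lie in $R[{\bf x}',{\bf y}']_{\bf 2}$, so $D=g_1^2-4g_0g_2$ is homogeneous of degree $4$, not $2$, in each remaining pair $(x_i,y_i)$. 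Consequently Lemma~\ref{lem:evalZero} needs $5$ points in each coordinate, not $3$; this is precisely where the hypothesis $|R'|\geq 4$ (equivalently, excluding $\F_3$) is used, since it guarantees $|\P^1(R')|\geq 5$. Your assertion that ``the size hypothesis guarantees $|\P^1(R')|\geq 3$ so Lemma~\ref{lem:evalZero} applies'' would, if it sufficed, prove the theorem over $\F_3$ as well, which is false: Example~\ref{ex:F3weird} exhibits a nonzero form of multidegree ${\bf 4}$ that vanishes at every point of $(\P^1(\F_3))^2$, so vanishing of $D$ on a too-small grid does not force $D\equiv 0$.

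The repair is exactly the paper's argument: when ${\rm char}(R)\neq 2$, choose a five-element subset $P\subseteq\P^1(R')$ and run your evaluation step over $P^{\,n-1}$; since $\deg_{(x_i,y_i)}D\leq 4$ for each $i$, Lemma~\ref{lem:evalZero} then gives $D\equiv 0$, and Lemma~\ref{lem:factorSquare} finishes the induction. Your characteristic-two remark is sound and matches the paper: there $D=g_1^2$, and since $R$ is a domain it suffices that $g_1$, which genuinely has degree $\leq 2$ in each pair, vanish on a $3\times\cdots\times3$ grid, so the three points of $\P^1(\F_2)$ (or any three points of $\P^1(R')$) suffice and no size hypothesis is needed in that case.
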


\begin{proof}
	($\Rightarrow$) If $g = h^2$, then for every $\gamma\in \SL_2(R)^n \rtimes S_n$, 
	$\gamma\cdot g = (\gamma\cdot h)^2$.  
	Note here that the action on $g$ comes from the action on 
	$R[{\bf x},{\bf y}]_{{\bf 2}}$ and the action on $h$ comes from the action on $R[{\bf x},{\bf y}]_{{\bf 1}}$. 
	The specialization of $\gamma\cdot g$ to 
	$({\bf x},{\bf y}) = (x_1, {\bf 0},y_1,{\bf 1})$ will be a square in $R[x_1,y_1]$. 
	Moreover, the coefficient $c_{\beta}$ of ${\bf x}^{2\beta}{\bf y}^{{\bf 2}-2\beta}$ in $g$ is
	the square of the coefficient ${\bf x}^{\beta}{\bf y}^{{\bf 1}-\beta}$ in $h$, 
	and in particular the square of an element in the ring $R$.

	($\Leftarrow$)
	We prove this by induction on $n$. 
	This holds immediately for $n=1$ by Lemma~\ref{lem:factorSquare}. 
	Now suppose $n\geq 1$ and take $g= g_2s^2 + g_1st +g_0t^2   \in R[{\bf x}, s,{\bf y}, t]$.
	Fixing $n+1$ in $S_{n+1}$ and ${\tiny\begin{pmatrix} 1 & 0 \\ 0 & 1\end{pmatrix}}$ or 
	${\tiny\begin{pmatrix} 0 & -1 \\ 1 & 0\end{pmatrix}}$ for the $(n+1)$st coordinate in
	$(\SL_2(R))^{n+1}$, we see that both $g_0$ and $g_2$ 
	satisfy the hypothesis of the theorem and so, by induction, are squares in $R[{\bf x},{\bf y}]$. 
	Here $1$ denotes the common multiplicative identity of $R$ and $R'$. 
	
	If ${\rm char}(R)\neq 2$ then $|R'|\geq 4$ and we can take a set $P \subset \P^1(R')$ of size five. 
	Define a map $\varphi: \P^1(R')\to {\rm SL}_2(R')$ 
	by $\varphi((1,0)) = \tiny{\begin{pmatrix} 0 & 1  \\ -1 & 0 \end{pmatrix}}$ and 
	for $r\in R'$, $\varphi((r,1)) = \tiny{\begin{pmatrix} 1 & r  \\ 0 & 1 \end{pmatrix}}$.  
	Then to $(p_1, \hdots, p_n)\in P^n$, we associate the element 
	$\gamma  = \left(\varphi(p_1), \ldots,\varphi(p_n), {\rm Id}_2\right)$ of $\SL_2(R')^{n+1}$.
	Acting on $g$ by $\gamma$ and then specializing to $x_1=\hdots= x_n=0$ and $y_1=\hdots =y_n=1$ gives 
	\begin{equation}\label{eq:SL_2restriction}
		\bigl(\gamma \cdot g\bigl)|_{{\bf x}=0, {\bf y}=1} = g({\bf a}, s, {\bf b}, t)
		\ \ \text{ and  } \ \ 
		\bigl({\rm Discr}_{(s,t)}(\gamma\cdot g)\bigl)|_{{\bf x}=0, {\bf y}=1} = 
		\bigl({\rm Discr}_{(s,t)}g\bigl)|_{{\bf x} =  {\bf a}, {\bf y}={\bf b}}
	\end{equation}
	where $p_i = (a_i, b_i)$ and ${\bf a} = (a_1, \hdots, a_n)$,  ${\bf b} = (b_1, \hdots, b_n)$. 
	Transposing $(x_1,y_1)$ and $(s,t)$ using the action of $S_{n+1}$, we see that by assumption, 
	this evaluation of the discriminant of $\gamma\cdot g$ must be zero. 
	Since the discriminant has degree $\leq 4$ in each variable, Lemma~\ref{lem:evalZero}
	implies that it is identically zero. 
	Then by Lemma~\ref{lem:factorSquare}, $g$ is a square in $R[{\bf x}, s, {\bf y}, t]$. 
	
	If ${\rm char}(R) = 2$, the discriminant ${\rm Discr}_{(s,t)}g$ simplifies to a square, $g_1^2$, which 
	by \eqref{eq:SL_2restriction}, must vanish at the points $\{(1,0),(1,1),(0,1)\}^n\subseteq (\P^1(R))^n$.  
	Since $g_1$ has degree $2$ in each variable and must vanish at these points, 
	Lemma~\ref{lem:evalZero} implies that $g_1$ is identically zero. 
	Therefore by Lemma~\ref{lem:factorSquare}, $g$ is a square. 
\end{proof}

\begin{remark}\label{rem:SL2subset}
	The proof of Theorem~\ref{thm:square} reveals that only a small subset of ${\rm SL}_2(R)$ is needed 
	in each coordinate to characterize multiquadratic squares, specifically a set of size five.
\end{remark}

Up to isomorphism, there is only one ring of size three, namely $\F_3$.  
The exclusion of $\F_3$ in the statement of Theorem~\ref{thm:square} is a necessary one, 
as the following example demonstrates. 
\begin{example} \label{ex:F3weird}
For $R = \F_3$ and $n=3$ consider the multiquadratic form 
	\begin{align*}
		g \ = \ &  x_1^2 (x_2 y_3 - x_3 y_2)^2 - 2 x_1 y_1 (x_2 y_3 + x_3 y_2) (x_2 x_3 - y_2 y_3) + 
		y_1^2 (x_2 x_3 + y_2 y_3)^2
		\\
		= \ &  x_1^2 x_2^2 y_3^2+ x_1^2 x_3^2 y_2^2+ x_2^2 x_3^2 y_1^2 
		- 2 x_1^2 x_2 x_3 y_2 y_3 - 2 x_1 x_2^2 x_3 y_1 y_3  - 2 x_1 x_2 x_3^2 y_1 y_2\\
		&+ 2 x_1x_2y_1y_2y_3^2 +   2 x_1x_3y_1y_2^2y_3+   2 x_2x_3y_1^2y_2y_3 
		+y_1^2y_2^2y_3^2.
	\end{align*}
	The discriminant of $g$ with respect to $(x_1, y_1)$ is given by 
	\[
	{\rm Discr}_{(x_1, y_1)}g = 16 x_2y_2(x_2+y_2)(x_2-y_2) x_3y_3(x_3+y_3)(x_3-y_3).
	\]
	This polynomial is non-zero in $\F_3[{\bf x},{\bf y}]_{\bf 4}$ but vanishes on all points in $(\P^1(\F_3))^2$. 
	The polynomial $g$ is invariant under permutations of indices, so the discriminants ${\rm Discr}_{(x_2, y_2)}g$ 
	and ${\rm Discr}_{(x_3, y_3)}g $ have the same property. 
\end{example}

Before using this result to characterize the image of the principal minor map, we 
record a few of the notable special cases of Theorem~\ref{thm:square}. 
In particular, over $\C$, the set of multiquadratic squares is 
defined by the orbit of a single polynomial and the only additional constraint 
over $\R$ is the nonnegativity of the orbit of one other polynomial.

\begin{cor}\label{cor:squareRC} 
	Let $g = \sum_{\alpha \in \{0,1,2\}^n}c_{\alpha}{\bf x}^{\alpha} \in \C[{\bf x}]$. 
	The polynomial $g$ is a square, $g = h^2$ with $h\in \C[{\bf x}]_{\leq {\bf 1}}$
	if and only if ${\bf c} = (c_{\alpha})_{\alpha \in \{0,1,2\}^n}$ satisfies 
	the images of 
	\begin{equation}\label{eq:squareC}
		(c_{(1,{\bf 0})})^2-4c_{(0,{\bf 0})} c_{(2,{\bf 0})}=0
	\end{equation}
	under the action of $\SL_2(\C)^n \rtimes S_n$.
	Moreover, $g$ is a square over $\R$, 
	$g = h^2$ with $h\in \R[{\bf x}]_{\leq {\bf 1}}$, if and only if 
	its coefficients ${\bf c} = (c_{\alpha})_{\alpha \in \{0,1,2\}^n}$ satisfy the images of
	\begin{equation}\label{eq:squareR}
		(c_{(1,{\bf 0})})^2-4c_{(0,{\bf 0})} c_{(2,{\bf 0})}=0 
		\ \ \ \text{ and } \ \ \ 
		c_{(0,{\bf 0})} \geq0.
	\end{equation}
	under the action of $\SL_2(\R)^n \rtimes S_n$.
\end{cor}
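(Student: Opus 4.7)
The plan is to deduce both parts of the corollary from Theorem~\ref{thm:square}. For $R = \C$ or $R = \R$, I would first pass to the multi-homogenization $g^{{\bf 2}-{\rm hom}} \in R[{\bf x}, {\bf y}]_{\bf 2}$, using that $g = h^2$ with $h \in R[{\bf x}]_{\leq {\bf 1}}$ if and only if $g^{{\bf 2}-{\rm hom}} = (h^{{\bf 1}-{\rm hom}})^2$ in $R[{\bf x}, {\bf y}]_{\bf 2}$. The nontrivial direction of this equivalence uses that a square root in $R[{\bf x},{\bf y}]$ of a polynomial bihomogeneous of bidegree $(2,\ldots,2)$ in the pairs $(x_i, y_i)$ must itself be bihomogeneous of bidegree $(1,\ldots,1)$. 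As recalled in Section~\ref{sec:background}, the $\SL_2(R)^n \rtimes S_n$-action on $R[{\bf x}]_{\leq {\bf 2}}$ corresponds under multi-homogenization to a linear change of coordinates on $R[{\bf x},{\bf y}]_{\bf 2}$ with the same coefficient vector ${\bf c}$, so the orbit conditions transfer between the two settings.

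Since both $\C$ and $\R$ have characteristic zero and more than three elements, I may apply Theorem~\ref{thm:square} with $R' = R$. It says that $g^{{\bf 2}-{\rm hom}}$ is a square if and only if (a) $c_{2\beta}$ is a square in $R$ for every $\beta \in \{0,1\}^n$, and (b) the $\SL_2(R)^n \rtimes S_n$-orbit of $(c_{(1,{\bf 0})})^2 - 4\,c_{(0,{\bf 0})}\,c_{(2,{\bf 0})}$ vanishes on ${\bf c}$. Over $\C$, every element is a square, so condition (a) is vacuous and the corollary reduces to (b).

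For $R = \R$, condition (a) becomes $c_{2\beta} \geq 0$ for every $\beta$, and I claim this is equivalent to $c_{(0,{\bf 0})}(\gamma \cdot g) \geq 0$ for every $\gamma \in \SL_2(\R)^n \rtimes S_n$. If $g = h^2$ is a square, then $\gamma \cdot g = (\gamma \cdot h)^2$, so its constant term $(\gamma \cdot g)(0) = ((\gamma \cdot h)(0))^2$ is a nonnegative real; together with (b), this gives the forward direction. For the converse, I would fix $\beta \in \{0,1\}^n$ and take $\gamma_\beta = (\gamma_1, \ldots, \gamma_n) \in \SL_2(\R)^n$ with $\gamma_i = \tiny{\begin{pmatrix} 0 & -1 \\ 1 & 0 \end{pmatrix}}$ when $\beta_i = 1$ and $\gamma_i = {\rm Id}$ otherwise. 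A short computation from the action formula in Section~\ref{sec:background} shows that each such $\gamma_i$ sends the monomial $x_i^2$ to $1$ with coefficient $+1$, so the constant term of $\gamma_\beta \cdot g$ is exactly $c_{2\beta}(g)$. The nonnegativity hypothesis then yields $c_{2\beta}(g) \geq 0$, recovering condition (a) and finishing the proof via Theorem~\ref{thm:square}.

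There is no essential obstacle: the argument is a translation between two equivalent characterizations. The one step that requires genuine verification is the sign computation establishing that $\gamma_\beta \cdot g$ has constant term exactly $+c_{2\beta}(g)$ rather than $-c_{2\beta}(g)$, which is precisely what makes the nonnegativity transfer work.
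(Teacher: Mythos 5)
Your proposal is correct and follows exactly the route the paper intends: the corollary is recorded as an immediate specialization of Theorem~\ref{thm:square} (with $R'=R=\C$ or $\R$), where over $\C$ the square conditions on the $c_{2\beta}$ are vacuous and over $\R$ they become nonnegativity, recovered from the orbit of $c_{(0,{\bf 0})}$ via the elements $\gamma_\beta$ you describe. Your sign verification is right: since the action on $R[{\bf x}]_{\leq {\bf 2}}$ uses the factor $(c_ix_i+d_i)^2$, the matrix $\bigl(\begin{smallmatrix}0&-1\\ 1&0\end{smallmatrix}\bigr)$ sends $x_i^2\mapsto +1$, so the constant coefficient of $\gamma_\beta\cdot g$ is exactly $+c_{2\beta}$.
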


As seen in the proof of Theorem~\ref{thm:square}, in characteristic two, 
linear equations in the coefficients suffice to cut out of the set of squares in $R[{\bf x}]_{\leq {\bf 2}}$.

\begin{cor}\label{cor:squareF2} 
	Let $g = \sum_{\alpha \in \{0,1,2\}^n}c_{\alpha}{\bf x}^{\alpha} \in R[{\bf x}]$ where $R$ 
	is a UFD of characteristic two.  
	The polynomial $g$ is a square, $g = h^2$ with $h\in R[{\bf x}]_{\leq {\bf 1}}$
	if and only if ${\bf c} = (c_{\alpha})_{\alpha \in \{0,1,2\}^n}$ satisfy 
\begin{itemize}
\item[(i)] for every $\beta\in \{0,1\}^n$, $c_{2\beta} $ is a square in $R$, and  
\item[(ii)] for every $\gamma\in \SL_2(\F_2)^n \rtimes S_n$, $(\gamma\cdot {\bf c})_{(1,{\bf 0})} =0$.
\end{itemize}
\end{cor}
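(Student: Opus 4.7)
The plan is to obtain this corollary as a direct specialization of Theorem~\ref{thm:square} to characteristic $2$, combined with the integral-domain cancellation $r^2 = 0 \Rightarrow r = 0$.

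First, I would pass between the dehomogenized polynomial $g \in R[{\bf x}]_{\leq {\bf 2}}$ appearing in the corollary and its multihomogenization $g^{{\bf 2}-\mathrm{hom}} \in R[{\bf x},{\bf y}]_{\bf 2}$, on which Theorem~\ref{thm:square} is phrased. The identity $(h^{{\bf 1}-\mathrm{hom}})^2 = (h^2)^{{\bf 2}-\mathrm{hom}}$ shows that $g$ is a square with witness in $R[{\bf x}]_{\leq {\bf 1}}$ if and only if $g^{{\bf 2}-\mathrm{hom}}$ is a square with witness in $R[{\bf x},{\bf y}]_{{\bf 1}}$, and the coefficients $c_\alpha$ agree under this identification (the $\alpha$-coefficient of $g$ matches the coefficient of ${\bf x}^\alpha{\bf y}^{{\bf 2}-\alpha}$ in $g^{{\bf 2}-\mathrm{hom}}$). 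The $\SL_2(R)^n \rtimes S_n$ action commutes with multihomogenization and dehomogenization, so the induced action on the coefficient vector ${\bf c}$ is the same.

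Next, since $R$ has characteristic $2$, the prime subring $\F_2 \subseteq R$ satisfies $1_{\F_2} = 1_R$, and the size condition $|R'| \geq 4$ in Theorem~\ref{thm:square} is explicitly waived in characteristic $2$. Therefore we may apply the theorem with $R' = \F_2$. Condition (i) of the corollary is literally condition (i) of the theorem. For condition (ii), the defining equation $(c_{(1,{\bf 0})})^2 - 4 c_{(0,{\bf 0})} c_{(2,{\bf 0})} = 0$ collapses in characteristic $2$ to the pure square $(c_{(1,{\bf 0})})^2 = 0$. Because $R$ is a UFD and hence an integral domain, this vanishing is equivalent to the linear equation $c_{(1,{\bf 0})} = 0$. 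Applying $\gamma$ via the identity $\gamma \cdot F({\bf c}) = F(\gamma \cdot {\bf c})$ for any $F \in R[a_S : S\subseteq [n]]$, the orbit of $(c_{(1,{\bf 0})})^2$ consists of the squares $((\gamma\cdot{\bf c})_{(1,{\bf 0})})^2$, and vanishing of each such square is equivalent to $(\gamma\cdot{\bf c})_{(1,{\bf 0})} = 0$, which is exactly condition (ii) of the corollary.

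No substantive new ideas are needed beyond Theorem~\ref{thm:square}; the work is purely organizational. The only point that deserves care is verifying that $\F_2$ is an admissible choice of $R'$ in every characteristic-$2$ UFD, and that replacing the quadratic relation by the linear relation $c_{(1,{\bf 0})} = 0$ is licensed by $R$ being an integral domain. Both are immediate, so the proof amounts to a short citation of Theorem~\ref{thm:square} followed by the cancellation argument above.
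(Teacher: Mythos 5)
Your proposal is correct and follows essentially the same route as the paper, which states Corollary~\ref{cor:squareF2} as an immediate specialization of Theorem~\ref{thm:square} to characteristic two (taking $R'=\F_2$, with the discriminant collapsing to the square $(c_{(1,{\bf 0})})^2$ so that integral-domain cancellation yields the linear conditions). Your additional remarks on passing between $g$ and $g^{{\bf 2}-\mathrm{hom}}$ and on the compatibility of the group action are exactly the routine details the paper leaves implicit.
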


\section{Characterizing the image of the principal minor map}\label{sec:finale}

We can now combine the characterization of multiaffine determinantal polynomials from Section~\ref{sec:DetRep} 
and the characterization of multiquadratic squares in Section~\ref{sec:multiQsquares} to give a complete description 
of the principal minor map over any unique factorization domain of size $\neq 3$. 

\begin{thm}\label{thm:HypDet2} Let $R$ be an UFD with $|R|\neq 3$ and 
let ${\bf a} = (a_S)_{S\subseteq [n]}\in R^{2^n}$ with $a_{\emptyset}=1$. There exists a symmetric matrix over $R$ with principal minors ${\bf a}$ if and only if 
\begin{itemize}
\item[(i)] for every $i,j\in [n]$,  $a_{i}a_{j} - a_{ ij} $ is a square in $R$, and  
\item[(ii)] for every $\gamma\in \SL_2(R)^n \rtimes S_n$, $(\gamma\cdot{\rm HypDet})({\bf a}) =0$.
\end{itemize}
\end{thm}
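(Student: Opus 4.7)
The plan is to combine Theorem~\ref{thm:DeltaMinors}, which reduces the problem to showing each $\Delta_{ij}(f_{\bf a})$ is a square in $R[{\bf x}]$, with Theorem~\ref{thm:square}, which characterizes multiquadratic squares by corner coefficients and discriminant orbits.

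\emph{Necessity.} Suppose ${\bf a}$ is the vector of principal minors of $A\in{\rm Sym}_n(R)$. By Theorem~\ref{thm:DeltaMinors}, $\Delta_{ij}(f_{\bf a})$ is a square in $R[{\bf x}]$. A direct computation shows the coefficient of $\prod_{k\neq i,j}x_k^2$ in $\Delta_{ij}(f_{\bf a})$ equals $a_ia_j - a_{ij}$, which as the top coefficient of a square polynomial is itself a square in $R$; this yields (i). For (ii), by Proposition~\ref{prop:homDelta} the bi-homogenization $(\Delta_{ij}(f_{\bf a}))^{{\bf 2}-{\rm hom}}$ is a square in $R[{\bf x},{\bf y}]_{{\bf 2}}$, and the easy direction of Theorem~\ref{thm:square} yields the vanishing of an entire orbit of discriminants. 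The classical identification at $n=3$ of ${\rm Discr}_{x_k}\Delta_{ij}(f_{\bf a})$ with ${\rm HypDet}({\bf a})$, combined with Proposition~\ref{prop:SL2onDelta} and the $S_n$ factor acting on triples $\{i,j,k\}$, shows that this orbit coincides with the orbit of ${\rm HypDet}$ under the full group $\SL_2(R)^n\rtimes S_n$, leveraging the $\SL_2^3$-invariance of the hyperdeterminant on the three 3-slice coordinates.

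\emph{Sufficiency.} Assume (i) and (ii). By Theorem~\ref{thm:DeltaMinors} and Proposition~\ref{prop:homDelta}, it suffices to show that for each pair $i,j\in[n]$ the multiquadratic polynomial $g = (\Delta_{ij}(f_{\bf a}))^{{\bf 2}-{\rm hom}}$ is a square in $R[{\bf x},{\bf y}]_{{\bf 2}}$. Applying Theorem~\ref{thm:square} to $g$ over the $n-2$ free coordinates, the discriminant-orbit conditions translate directly to (ii) via the identification above. The corner-coefficient conditions demand that for every $T\subseteq[n]\setminus\{i,j\}$ the Hadamard--Fischer difference $H_T := a_{T\cup i}a_{T\cup j} - a_Ta_{T\cup ij}$ be a square in $R$. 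Since (i) provides only the case $T=\emptyset$, this is the principal obstacle.

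I establish squareness of all $H_T$ by induction on $|T|$, the base case being (i). For the inductive step, fix $k\notin T\cup\{i,j\}$ and apply (ii) to the explicit $\gamma\in\SL_2(R)^n\rtimes S_n$ obtained by flipping the coordinates in $T$ composed with a permutation sending $\{i,j,k\}$ to $\{1,2,3\}$; the $\{1,2,3\}$-slice of $\gamma\cdot{\bf a}$ is exactly the shifted 3-slice $(a_{T\cup S})_{S\subseteq\{i,j,k\}}$. The identity $(\gamma\cdot{\rm HypDet})({\bf a})=0$ then reads $v^2 = 4\,H_T\,H_{T\cup k}$, where $v = a_{T\cup i}a_{T\cup jk}+a_{T\cup j}a_{T\cup ik}-a_{T\cup k}a_{T\cup ij}-a_Ta_{T\cup ijk}$. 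When ${\rm char}(R)\neq 2$ and the inductive hypothesis supplies $H_T=u^2$ with $u\neq 0$, then $v^2=(2u)^2 H_{T\cup k}$ and the UFD property of $R$ forces $H_{T\cup k}$ to be a square. The degenerate subcase $H_T=0$ with $a_T\neq 0$ is handled by using $v=0$ to eliminate $a_Ta_{T\cup ijk}$ and the relation $a_Ta_{T\cup ij}=a_{T\cup i}a_{T\cup j}$ to eliminate $a_{T\cup ij}$ in $a_T^2\,H_{T\cup k}$; the resulting expression factors as $(a_{T\cup i}a_{T\cup k}-a_Ta_{T\cup ik})(a_{T\cup j}a_{T\cup k}-a_Ta_{T\cup jk})$, a product of two Hadamard--Fischer differences at level $T$ for the pairs $(i,k)$ and $(j,k)$, both squares by the inductive hypothesis at this same level; UFD then gives $H_{T\cup k}$ a square. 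The remaining subcase $a_T=0$ is handled by an auxiliary 3-slice identity at the same level $|T|$ involving a different triple to reduce $H_{T\cup k}$ to a product of previously-established squares. Characteristic two is treated separately via Corollary~\ref{cor:HypDetF2}, where the hyperdeterminant degenerates to a linear form and the corner-coefficient conditions follow directly from (i) together with these linear relations.
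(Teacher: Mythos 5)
Your reduction (via Theorem~\ref{thm:DeltaMinors} and Proposition~\ref{prop:homDelta}) and your necessity argument are fine, and your identification of the corner coefficients of $\Delta_{ij}(f_{\bf a})$ with the Hadamard--Fischer differences $H_T^{(i,j)}=a_{T\cup i}a_{T\cup j}-a_Ta_{T\cup ij}$ is correct. The gap is exactly where you expect it: to invoke Theorem~\ref{thm:square} over $R$ you must show \emph{every} $H_T^{(i,j)}$ is a square in $R$, while (i) only gives $T=\emptyset$, and your induction does not close. The two cases you work out are correct (if $H_T^{(i,j)}=u^2\neq 0$ and ${\rm char}(R)\neq 2$, the relation $v^2=4H_T^{(i,j)}H_{T\cup k}^{(i,j)}$ does the job; if $H_T^{(i,j)}=0$ and $a_T\neq 0$, the factorization $a_T^2H_{T\cup k}^{(i,j)}=H_T^{(i,k)}H_T^{(j,k)}$ checks out). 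But the remaining case $a_T=0$ is only asserted. The ``auxiliary same-level identities'' available are of the form $\tilde v^2=4\,H^{(i,j)}_{(T\cup k)\setminus k'}\,H^{(i,j)}_{T\cup k}$ for $k'\in T\cup\{k\}$, or the analogous factorization with base $(T\cup k)\setminus k'$, and each of these produces $H^{(i,j)}_{T\cup k}$ as a square only when the corresponding lower-level quantity ($H$ or $a$) is nonzero. In the fully degenerate situation (e.g.\ $a_{T'}=0$ and $H^{(i,j)}_{T'}=0$ for all bases $T'\subset T\cup\{k\}$ of size $|T|$, which can certainly occur once $|T|\geq 1$), all of these identities collapse to $0=0$ and yield nothing, and slices built on other triples do not even contain $a_{T\cup ijk}$; so the key inductive step is missing, and it is not clear it can be completed using shifted $3$-slices alone rather than the full $\SL_2$ orbit. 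A second, smaller problem: your characteristic-two branch cites Corollary~\ref{cor:HypDetF2}, which in the paper is \emph{deduced from} Theorem~\ref{thm:HypDet2}, so as written this is circular (you would want Corollary~\ref{cor:squareF2}, and you would still face the same $a_T=0$ obstruction there).

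For contrast, the paper's sufficiency proof avoids this entire difficulty: it first works over $\F=\overline{{\rm frac}(R)}$, where every element is a square, so the corner-coefficient hypotheses of Theorem~\ref{thm:square} are automatic and condition (ii) alone forces each $\Delta_{ij}(f_{\bf a})$ to be a square over $\F$; Theorem~\ref{thm:DeltaMinors} then gives $A\in{\rm Sym}_n(\F)$ with principal minors ${\bf a}$, and one descends to $R$ using only (i): the diagonal entries are the $a_i\in R$, and an off-diagonal entry $z$ satisfies $z^2=a_ia_j-a_{ij}=r^2$ with $r\in R$, whence $z=\pm r\in R$ since $\F$ is a domain. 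So the higher Hadamard--Fischer conditions never need to be verified over $R$ directly. If you want to keep your all-over-$R$ strategy, you must genuinely prove the degenerate case (likely by exploiting more of the orbit, e.g.\ evaluations as in Remark~\ref{rem:count}); otherwise the field-extension-and-descent trick is the efficient fix.
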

\begin{proof}
($\Rightarrow$) 
If ${\bf a}$ belongs to the image of the principal minor map over $R$, 
then $a_ia_j - a_{ij}$ is the square of the $(i,j)$th entry of the representing matrix $A$, and so is a square in $R$. 
Also, by Theorem~\ref{thm:DeltaDetRep} $\Delta_{ij}f_{\bf a}$ is a square for all $i,j\in [n]$.
Then, by Corollary~\ref{cor:SquareDelInvariance}, for any $\gamma \in \SL_2(R)^n \rtimes S_n$, 
$\Delta_{ij}(\gamma \cdot f_{\bf a}) $ is  square. In particular, 
\[ 
{\rm HypDet}(\gamma\cdot {\bf a}) = {\rm Discr}_{x_3}(\Delta_{12}(\gamma \cdot f_{\bf a}))|_{x_4=\hdots = x_n=0} = 0. 
\]

($\Leftarrow$) 
First, we show that ${\bf a}$ belongs to the image of the principal minor map 
over the algebraic closure of the fraction field of $R$, $\F = \overline{({\rm frac}(R))}^{alg}$.
Then $R$ is a subring of $\F$ and has size $\geq 4$ if ${\rm char}(R) = {\rm char}(\F) \neq 2$. 
Let $f = f_{\bf a}$. 
Every element of $R$ is a square over $\F$. 
Then by Theorem~\ref{thm:square}, 
$\Delta_{ij}f =  \sum_{\alpha \in \{0,1,2\}^n}c_{\alpha}{\bf x}^{\alpha}$ is a square in $\F[{\bf x}]$ 
if and only if $ {\bf c} = (c_{\alpha})_{\alpha \in \{0,1,2\}^n}$ and its images under 
$\SL_2(R)^n \rtimes S_n$ satisfy $(c_{(1,{\bf 0})})^2-4c_{(0,{\bf 0})} c_{(2,{\bf 0})}=0$. 
This condition for all $i,j\in [n]$ is equivalent to the condition that $(\gamma\cdot {\rm HypDet})({\bf a}) =0$ 
for all  $\gamma\in \SL_2(R)^n \rtimes S_n$. Therefore $\Delta_{ij}f$ is a square in $\F[{\bf x}]$ for all $i,j$. 
By Theorem~\ref{thm:DeltaMinors}, ${\bf a}$ is the vector of principal minors of some matrix $A$ with entries in $\F$. 
The diagonal entries of $A$ are entries $a_i$ in this vector and therefore belong to $R$. 
Let $z$ denote the $(i,j)$th entry of $A$ for some $i\neq j$. 
By assumption $z^2 = a_i a_j - a_{ij} = r^2$ for some $r\in R$. Then $(z+r)(z-r)=0$ 
implying that $z=\pm r\in R$. Therefore $A\in {\rm Sym}_n(R)$.  
\end{proof}

\begin{remark}\label{rem:count}
Note that in part (ii) of the characterization in Theorem~\ref{thm:HypDet2}, it suffices to take $\binom{n}{3}\cdot 5^{n-3}$ elements  $\gamma\in \SL_2(R)^n \rtimes S_n$. Specifically, let $P\subseteq R$ be a set of size 5.  For any subset $\{i,j,k\} \subseteq[n]$ and point ${\bf p}\in P^{n-3}$, we get an equation 
\[
{\rm Discr}_{x_k}(\Delta_{ij}(f_{\bf a}))|_{{\bf x} = {\bf p}} = 0. 
\]
This is enough to ensure that ${\rm Discr}_{x_k}(\Delta_{ij}(f_{\bf a}))$ is identically zero. 
More generally, we can take evaluations of ${\rm Discr}_{(x_k,y_k)}(\Delta_{ij}(f_{\bf a}))^{{\bf 2}-{\rm hom}}$ at 
$5^{n-3}$ points in $\P^1(R)$, as in Section~\ref{sec:multiQsquares}.  
While this expression appears to depend on the ordering of $i,j,k$, one can check that for any $f\in R[{\bf x}]_{\leq {\bf 1}}$, 
${\rm Discr}_{x_k}(\Delta_{ij}(f)) = {\rm Discr}_{x_j}(\Delta_{ik}(f)) = {\rm Discr}_{x_i}(\Delta_{jk}(f))$. See, for example, the proof of Theorem~3.1 in \cite{WagnerSurvey}. 
When $R$ is a ring of characteristic 2, it suffices to take $P$ to have size $3$, giving a total of $\binom{n}{3}\cdot 3^{n-3}$ equations. 
\end{remark}

Applying this to $R = \C$, $\R$, and $\F_2$, we find the following immediate consequences. 

\begin{cor}\label{cor:HypDetC}
Let ${\bf a} = (a_S)_{S\subseteq [n]}\in \mathbb{C}^{2^n}$ with $a_{\emptyset}=1$. There exists a symmetric matrix over $\C$ with principal minors ${\bf a} $ if and only if ${\bf a}$ and all its images 
under the action of $\SL_2(\mathbb{C})^n \rtimes S_n$ satisfy the $2 \times 2 \times 2$ hyperdeterminant ${\rm HypDet}({\bf a} ) =0$. 
\end{cor}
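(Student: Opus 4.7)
The plan is to deduce this corollary as an immediate consequence of Theorem~\ref{thm:HypDet2} applied with $R = \mathbb{C}$. First I would verify the hypotheses of Theorem~\ref{thm:HypDet2}: the field $\mathbb{C}$ is a unique factorization domain (every field is), and clearly $|\mathbb{C}| \neq 3$. So the theorem applies and gives a characterization of the image of the principal minor map in terms of the two conditions (i) and (ii).

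The forward direction is then immediate: if $\mathbf{a}$ is the vector of principal minors of some symmetric matrix over $\mathbb{C}$, then Theorem~\ref{thm:HypDet2}(ii) asserts exactly that $(\gamma\cdot \mathrm{HypDet})(\mathbf{a}) = 0$ for all $\gamma \in \mathrm{SL}_2(\mathbb{C})^n \rtimes S_n$. For the reverse direction, the key observation is that condition (i) of Theorem~\ref{thm:HypDet2} becomes vacuous over $\mathbb{C}$, because every element of $\mathbb{C}$ is a square. In particular, for any $\mathbf{a} \in \mathbb{C}^{2^n}$ and any $i,j \in [n]$, the element $a_i a_j - a_{ij} \in \mathbb{C}$ is automatically a square in $\mathbb{C}$. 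Hence condition (ii) alone suffices to guarantee that $\mathbf{a}$ lies in the image of the principal minor map, as desired.

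There is essentially no obstacle here, since all the work has been carried out in Theorem~\ref{thm:HypDet2}; the only point requiring any comment is the algebraic closedness of $\mathbb{C}$, which trivially removes the square condition appearing in (i). I would keep the argument short and simply specialize the theorem to $R = \mathbb{C}$.
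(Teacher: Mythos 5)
Your proposal is correct and matches the paper's treatment: the paper states Corollary~\ref{cor:HypDetC} as an immediate consequence of Theorem~\ref{thm:HypDet2} specialized to $R=\C$, with condition (i) dropping out precisely because every complex number is a square. Nothing further is needed.
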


\begin{cor}\label{cor:HypDetR}
Let ${\bf a} = (a_S)_{S\subseteq [n]}\in \mathbb{R}^{2^n}$ with $a_{\emptyset}=1$. 
 There exists a symmetric matrix over $\R$ with principal minors ${\bf a} $
if and only if ${\bf a} $ and all its images under the action of $\SL_2(\mathbb{R})^n \rtimes S_n$
satisfy
\[{\rm HypDet}({\bf a}) =0 \text{ and } a_{1} a_{2} - a_{\emptyset} a_{12} \geq0.\]
\end{cor}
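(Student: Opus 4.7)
The plan is to deduce this result immediately from Theorem~\ref{thm:HypDet2} by specializing to $R=\R$. Since $\R$ is a field and therefore a UFD, and since $|\R|=\infty\neq 3$, the hypotheses of Theorem~\ref{thm:HypDet2} are satisfied, and the corollary follows once we translate condition (i) of that theorem into the appropriate statement over the reals.

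The key (and only) observation is that an element $r\in\R$ is a square, i.e., $r=s^{2}$ for some $s\in\R$, if and only if $r\geq 0$. Consequently, the hypothesis ``$a_{i}a_{j}-a_{ij}$ is a square in $\R$'' appearing in Theorem~\ref{thm:HypDet2}(i) is equivalent to the inequality $a_{i}a_{j}-a_{ij}\geq 0$ appearing in the statement of Corollary~\ref{cor:HypDetR}. Condition (ii) of the corollary is identical to condition (ii) of Theorem~\ref{thm:HypDet2}, so the two sets of hypotheses are logically equivalent; applying Theorem~\ref{thm:HypDet2} in both directions then yields the corollary.

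There is no substantive obstacle here: all the serious work has already been carried out in the proof of Theorem~\ref{thm:HypDet2}, where one first produces a symmetric matrix over the algebraic closure $\overline{\R}=\C$ using the vanishing of the $\SL_2$-orbit of the hyperdeterminant, and then descends to a matrix with real entries by extracting square roots of the nonnegative quantities $a_{i}a_{j}-a_{ij}$, which become the off-diagonal entries of the matrix. The corollary simply records the clean semialgebraic reformulation of that argument for the real numbers.
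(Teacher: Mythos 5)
Your reduction is fine in one direction but misreads the statement in the other, and the misreading hides the only nontrivial point. The corollary does not merely assert the inequalities $a_ia_j-a_{ij}\ge 0$ for the original vector ${\bf a}$ (which is what condition (i) of Theorem~\ref{thm:HypDet2} becomes over $\R$); it asserts that \emph{every} image $\gamma\cdot{\bf a}$ with $\gamma\in\SL_2(\R)^n\rtimes S_n$ satisfies $(\gamma\cdot{\bf a})_1(\gamma\cdot{\bf a})_2-(\gamma\cdot{\bf a})_\emptyset(\gamma\cdot{\bf a})_{12}\ge 0$. This is a much larger family of inequalities: already the $\SL_2$ element in the example of Section~\ref{sec:background} turns $a_2a_3-a_\emptyset a_{23}$ into $a_{12}a_{13}-a_1a_{123}$, which is not among the inequalities of Theorem~\ref{thm:HypDet2}(i), and generic $\gamma$ give genuinely new quartic inequalities. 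So the two hypothesis sets are not ``identical,'' and the claimed logical equivalence is precisely what must be proved in the forward direction. The backward direction of the corollary is fine as you argue: restricting to permutation elements and using $a_\emptyset=1$ recovers condition (i), condition (ii) is literally the same, and Theorem~\ref{thm:HypDet2} then produces the matrix.

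For the forward direction you need to show that membership in the image forces the inequality for all $\gamma$, not just for the identity. The paper's machinery makes this quick, but it is an extra step beyond quoting Theorem~\ref{thm:HypDet2}: if ${\bf a}$ is the vector of principal minors of a real symmetric matrix, then each $\Delta_{ij}(f_{\bf a})$ is the square of a multiaffine polynomial (Theorem~\ref{thm:DeltaMinors}), hence by Corollary~\ref{cor:SquareDelInvariance} so is $\Delta_{12}(\gamma\cdot f_{\bf a})$ for every $\gamma$. Its discriminant therefore vanishes, giving ${\rm HypDet}(\gamma\cdot{\bf a})=0$, and its coefficient of $\prod_{k\ge 3}x_k^2$, which equals $(\gamma\cdot{\bf a})_1(\gamma\cdot{\bf a})_2-(\gamma\cdot{\bf a})_\emptyset(\gamma\cdot{\bf a})_{12}$, is the square of a real number, hence nonnegative. (Alternatively, when the leading coefficient $\lambda$ of $\gamma\cdot f_{\bf a}$ is nonzero, Corollary~\ref{cor:SL2_scaled} exhibits $\tfrac1\lambda\,\gamma\cdot{\bf a}$ as a vector of principal minors of a real symmetric matrix and the inequality follows after multiplying by $\lambda^2$; the case $\lambda=0$ still needs the square argument or a limiting argument.) With this addition your proof coincides with the paper's intended derivation, which is immediate from Theorem~\ref{thm:HypDet2} \emph{together with} the invariance statement Corollary~\ref{cor:SquareDelInvariance}, not from the theorem alone.
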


\begin{cor}\label{cor:HypDetF2}
Let ${\bf a} = (a_S)_{S\subseteq [n]}\in R^{2^n}$ with $a_{\emptyset}=1$ where $R$ has characteristic two. 
 There exists a symmetric matrix over $R$ with principal minors ${\bf a} $
if and only if 
\begin{itemize}
\item[(i)] for every $i,j\in [n]$, $a_{i}a_{j} - a_{ij} $ is a square in $R$, and  
\item[(ii)] for every $\gamma\in \SL_2(\F_2)^n \rtimes S_n$, $\gamma\cdot(a_{\emptyset} a_{123} +  a_{1} a_{23} +a_{2} a_{13}+a_{3} a_{12})=0$.
\end{itemize}
In particular, for $R = \F_2$, {\rm (i)} is always satisfied and the image of the principal minor map is cut out by the quadratic equations in {\rm (ii)}. 
\end{cor}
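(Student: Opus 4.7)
My plan is to derive Corollary~\ref{cor:HypDetF2} as a direct specialization of Theorem~\ref{thm:HypDet2} (with the characterization of multiquadratic squares from Section~\ref{sec:multiQsquares}) to characteristic two, with two key simplifications specific to this setting.

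First, I would observe that since $R$ has characteristic $2$, we have $|R| \neq 3$ automatically, so Theorem~\ref{thm:HypDet2} applies. Condition (i) carries over verbatim. The only work is to convert condition (ii) into its characteristic $2$ form. A direct inspection of the explicit expansion of ${\rm HypDet}$ displayed in the introduction shows that in characteristic two, every term with coefficient $2$ or $4$ vanishes and the remaining four squared terms combine as
\[
{\rm HypDet}({\bf a}) \;=\; a_\emptyset^2 a_{123}^2 + a_1^2 a_{23}^2 + a_2^2 a_{13}^2 + a_3^2 a_{12}^2 \;=\; L({\bf a})^2,
\]
where $L({\bf a}) = a_\emptyset a_{123} + a_1 a_{23} + a_2 a_{13} + a_3 a_{12}$, using the Frobenius identity $(u+v)^2 = u^2 + v^2$. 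Since $R$ is a UFD and hence an integral domain, $L({\bf a})^2 = 0$ if and only if $L({\bf a}) = 0$. Because the group action on $R[a_S : S\subseteq[n]]$ is by ring automorphisms, $(\gamma \cdot {\rm HypDet})({\bf a}) = (\gamma \cdot L)({\bf a})^2$, so the vanishing condition on the ${\rm HypDet}$-orbit is equivalent to the vanishing condition on the $L$-orbit.

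Next, I need to justify that only the subgroup $\SL_2(\F_2)^n \rtimes S_n$ (rather than $\SL_2(R)^n \rtimes S_n$) is required. Tracing through the characteristic-two branch in the proof of Theorem~\ref{thm:square}, the discriminant ${\rm Discr}_{(s,t)} g$ collapses to the square $g_1^2$, and one need only verify that the orbit equations force $g_1$ to vanish at the points $\{(1,0),(1,1),(0,1)\}^n \subseteq (\P^1(R))^n$; by Lemma~\ref{lem:evalZero} this forces $g_1 \equiv 0$. These three points correspond (via the map $\varphi$ defined in the proof) to the six elements of $\SL_2(\F_2)$ applied at each coordinate, together with the $S_n$ action used to permute which variable plays the role of $(s,t)$. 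Combined with the first paragraph, this exactly yields condition (ii) of the corollary: the $\SL_2(\F_2)^n \rtimes S_n$-orbit of $L$ must vanish on ${\bf a}$.

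Finally, the ``in particular'' claim for $R = \F_2$ follows because $\F_2 = \{0,1\}$ and $0^2 = 0$, $1^2 = 1$, so every element is a square and (i) is automatic. The main (minor) obstacle is simply making precise the correspondence between the points in $\{(1,0),(1,1),(0,1)\}^n$ used in Theorem~\ref{thm:square} and the elements of $\SL_2(\F_2)^n \rtimes S_n$ in the statement, but this is mechanical once the map $\varphi$ is recalled.
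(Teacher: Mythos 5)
Your proposal is correct and follows essentially the same route the paper intends: specialize Theorem~\ref{thm:HypDet2} to characteristic two, note ${\rm HypDet} = L^2$ with $L = a_{\emptyset}a_{123}+a_1a_{23}+a_2a_{13}+a_3a_{12}$ so vanishing of the orbit of ${\rm HypDet}$ becomes vanishing of the orbit of $L$, and use the characteristic-two branch of Theorem~\ref{thm:square} (cf.\ Corollary~\ref{cor:squareF2} and Remark~\ref{rem:count}) to see that the subgroup $\SL_2(\F_2)^n\rtimes S_n$ suffices. The only trivial imprecision is that the three points $(1,0),(1,1),(0,1)$ give three (not six) matrices of $\SL_2(\F_2)$ per coordinate, which does not affect the argument since these lie in the subgroup whose full orbit is assumed in (ii).
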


It is unclear whether or not Theorem~\ref{thm:HypDet2}  can be extended to $R = \F_3$. 
Example~\ref{ex:F3weird} shows that this would likely require a different proof technique. 
Interestingly, the polynomial $g$ in this example is of the form $\Delta_{12}(f)$ for some 
$f\in \F_3[x_1, \hdots, x_5]_{\leq {\bf 1}}$, but for all such $f$ we have found, the discriminant of 
some other $\Delta_{ij}(f)$ fails to vanish on $(\P^1(\F_3))^{2}$. 

\begin{question}
Does the equivalence in Theorem~\ref{thm:HypDet2} hold for $R = \F_3$? 
\end{question}

\section{Other determinantal representations and connections to $\Gr_\F(d,n)$}\label{sec:otherDetRep}
\subsection{Other multiaffine determinantal representations}
In this section we restrict ourselves to fields and consider the set of multiaffine determinantal polynomials. 
Formally, let $\F$ be an arbitrary field. We call a polynomial $f \in \F[{\bf x}]_{\leq {\bf 1}}$ \emph{determinantal} 
if it can be written in the form 
\begin{equation}\label{eq:det}
f({\bf x}) = \lambda \det\left( V {\rm diag}(x_1, \hdots, x_n) V^T + W\right) =\lambda \det\left(\sum_{i=1}^n x_i v_iv_i^T + W\right)
\end{equation}
for some $\lambda\in \F$, some matrix $V = (v_1, \hdots, v_n)\in \F^{m\times n}$ and some  $W\in {\rm Sym}_m(\F)$ for some $m$. 
Note that when we take $V$ to be the $n\times n$ identity matrix, 
this is exactly the principal minor polynomial $f_W$. 
When $m<n$, the coefficient of $x_1\cdots x_n$ in $f$ is necessarily zero.

\begin{thm}\label{thm:GenDet}
A polynomial $f\in \F[{\bf x}]_{\leq {\bf 1}}$ has a determinantal representation \eqref{eq:det} if and only if 
for all $i,j\in [n]$, $\Delta_{ij}f$ is a square in $\F[{\bf x}]$. 
Moreover, one can always take a representation of size $m = \deg(f)$ in  \eqref{eq:det}. 
\end{thm}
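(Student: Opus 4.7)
For the $(\Rightarrow)$ direction I would compute $\Delta_{ij}(f)$ directly. Writing $M = W + V\diag(x_1, \ldots, x_n) V^T$, the rank-one identity $\partial_k M = v_k v_k^T$ combined with Jacobi's formula gives $\partial_k \det M = v_k^T \text{adj}(M) v_k$ and, by differentiating $\text{adj}(M) M = \det(M) I$, the relation $\partial_j \text{adj}(M) = (\partial_j \det M) M^{-1} - \text{adj}(M) v_j v_j^T M^{-1}$ on the locus $\det M \neq 0$. Substituting, using $M^{-1} = \text{adj}(M)/\det M$, and invoking the symmetry of $\text{adj}(M)$, one obtains
\begin{equation*}
\det(M) \cdot \partial_i \partial_j \det M \;=\; (\partial_i \det M)(\partial_j \det M) - \bigl(v_i^T \text{adj}(M) v_j\bigr)^2,
\end{equation*}
an identity of polynomials in $\F[{\bf x}]$. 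Multiplying by $\lambda^2$ gives $\Delta_{ij}(f) = (\lambda\, v_i^T \text{adj}(M)\, v_j)^2$, a square.

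For the $(\Leftarrow)$ direction, let $d = \deg f$. If $d = n$, then the coefficient $\lambda$ of $x_1 \cdots x_n$ in $f$ is nonzero and $f/\lambda$ satisfies Theorem~\ref{thm:DeltaMinors}, yielding $A \in {\rm Sym}_n(\F)$ with $f = \lambda\det(I_n \diag({\bf x}) I_n^T + A)$, a representation of size $m = n = \deg f$. If $d < n$, I would reduce to the full-degree case via $\SL_2$ swaps. Choose $S \subseteq [n]$ of size $d$ such that the coefficient $c_S$ of $\prod_{i \in S} x_i$ in $f$ is nonzero, and for each $k \in [n] \setminus S$ apply the swap $\gamma_k = \bigl(\begin{smallmatrix}0 & -1\\ 1 & 0\end{smallmatrix}\bigr)$ in coordinate $k$. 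Writing $f = x_k f_1 + f_0$ with $\deg f_0 = d > \deg f_1$, one has $\gamma_k \cdot f = -f_1 + x_k f_0$, of total degree $d+1$ and top support $S \cup \{k\}$. Iterating over the $n - d$ elements of $[n] \setminus S$ produces $\tilde f := \tilde\gamma \cdot f$ of degree $n$ whose Rayleigh differences remain squares by Corollary~\ref{cor:SquareDelInvariance}, and the full-degree case yields $\tilde f = c_S \det(\diag({\bf x}) + A)$ for some $A \in {\rm Sym}_n(\F)$.

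The remaining task is to convert the principal-minor representation of $\tilde f$ into a representation of $f = \tilde\gamma^{-1}\cdot\tilde f$ of size exactly $d$ by undoing the swaps one at a time. Starting from $g = \lambda\det(\diag({\bf x}) + A)$ of size $m$, the matrix determinant lemma applied to the rank-one summand $x_k e_k e_k^T$ together with the $\SL_2$-swap identity give $\gamma_k \cdot g = \lambda\det\hat M$ for the symmetric $(m+1) \times (m+1)$ block matrix
\begin{equation*}
\hat M = \begin{pmatrix} N & e_k \\ e_k^T & x_k \end{pmatrix}, \quad N := (\diag({\bf x}) + A)|_{x_k = 0}.
\end{equation*}
Because an inverse swap undoes a degree-raising swap, the degree drops by one exactly when $A_{kk} = 0$, and in that case the $2 \times 2$ principal submatrix of $\hat M$ at positions $\{k, m+1\}$ equals $\bigl(\begin{smallmatrix}0 & 1\\ 1 & x_k\end{smallmatrix}\bigr)$ with unit determinant $-1$. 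A Schur complement along this block eliminates two rows and columns and produces a symmetric representation of size $m-1$ of the required form $V' \diag({\bf x}) V'^T + W'$, where $V' \in \F^{(m-1) \times n}$ has $k$-th column equal to $A_{[n]\setminus\{k\}, k}$ and standard basis vectors in its remaining columns, and $W' = A[[n] \setminus \{k\}, [n] \setminus \{k\}]$. Iterating this reduction $n - d$ times yields a representation of size exactly $d$; the matching lower bound $m \geq \deg f$ is automatic from the determinantal form.

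The main obstacle I expect is making this Schur-complement iteration uniform across all $n - d$ inverse swaps: verifying that the relevant pivot entry genuinely vanishes at each stage (so no implicit inversion occurs) and that after each reduction the matrix remains in the prescribed block form $V' \diag({\bf x}) V'^T + W'$ needed for the next iteration. A conceptual alternative would be to construct a single $(n+d)$-variable lift of $f$ that is the principal-minor polynomial of a block matrix $\bigl(\begin{smallmatrix}W & V \\ V^T & 0\end{smallmatrix}\bigr)$ and apply Theorem~\ref{thm:DeltaMinors} there; then the vanishing of the bottom-right block would automatically enforce the desired $V\diag({\bf x})V^T$ structure, but building this lift from $f$ and checking its Rayleigh-difference conditions appears no easier than the direct Schur-complement approach.
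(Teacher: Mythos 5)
Your ($\Rightarrow$) argument is fine: the identity $\Delta_{ij}(\det M)=(v_i^T\,{\rm adj}(M)\,v_j)^2$ is the coordinate-free form of the Dodgson identity \eqref{eq:dodgson} that the paper uses (to make it a genuine polynomial identity, valid also in characteristic two and when $\det M$ vanishes identically, derive it over $\Z$ with indeterminate entries of $V,W$ and specialize). Your degree-raising swaps together with Corollary~\ref{cor:SquareDelInvariance} and Theorem~\ref{thm:DeltaMinors} are also correct, and this is a genuinely different route from the paper, which instead homogenizes $f$ and applies Theorem~\ref{thm:DeltaDetRep} in the variables $x_1,\hdots,x_n,y$, then shows each extra coefficient matrix has rank one (via Lemma~\ref{lem:rank}) with square diagonal entries. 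The genuine gap is exactly the step you flag: the descent. Your pivot argument works only at the first undoing. There the degree drop does force $A_{kk}=0$, and your Schur complement correctly yields $\pm c_S\det\bigl(\sum_{j\neq k}x_je_je_j^T+x_k\,aa^T+A(k,k)\bigr)$ with $a=A_{[n]\setminus k,\,k}$. But at the second undoing, at an index $k'$, the pivot is no longer a scalar: it equals $x_kA_{kk'}^2+A_{k'k'}$, and the degree drop at that stage forces only $A_{k'k'}=0$ (together with the vanishing of certain $3\times 3$ principal minors $A_{\{k,k',l\}}$), not $A_{kk'}=0$. So the heuristic ``the degree drops, hence the pivot vanishes'' does not propagate, and as written the iteration stalls after one step.

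The observation that closes the gap is that the whole block of $A$ indexed by $K=[n]\setminus S$ vanishes. Indeed, for $T\subseteq K$ the coefficient of ${\bf x}^{[n]\setminus T}$ in $\tilde f$ equals $(-1)^{|T|}a_{S\cup T}$ on one hand and $c_S A_T$ on the other; since $|S\cup T|>d=\deg f$ for $T\neq\emptyset$, every principal minor $A_T$ with $\emptyset\neq T\subseteq K$ is zero. In particular $A_{kk}=0$ and $A_{\{k,k'\}}=-A_{kk'}^2=0$, so $A_{kk'}=0$ for all $k,k'\in K$, and the $K\times K$ block of $\diag({\bf x})+A$ is exactly $\diag(x_k:k\in K)$. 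A single block Schur complement with respect to that block (equivalently, undoing all the swaps at once) then gives
\[
f\;=\;\pm\,c_S\,\det\Bigl(\sum_{i\in S}x_ie_ie_i^T+\sum_{k\in K}x_k\,A_{Sk}A_{Sk}^T+A_{SS}\Bigr),
\]
where $A_{Sk}\in\F^{d}$ denotes the column $(A_{ik})_{i\in S}$ and the sign is absorbed into $\lambda$. This is a representation \eqref{eq:det} of size $d=\deg f$, so no step-by-step bookkeeping of pivots or block forms is needed. With this insertion your proof is complete, and its mechanism (swap up to full degree, then one Schur complement) is genuinely different from the paper's rank-one analysis of the homogenized representation.
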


\begin{proof}
($\Rightarrow$) 
Without loss of generality, we show that $\Delta_{12}(f)$ is a square. 
First suppose $v_1$ and $v_2$ are linearly dependent, i.e.~let $v_1 = \alpha v_2$ for some $\alpha\in \F$. 
Then $v_1v_1^T = \alpha^2v_2v_2^T$ and $f(x_1, \hdots, x_n) = f(0, \alpha^2x_1 +x_2, x_3, \hdots, x_n)$. Taking partial derivatives shows that $\frac{\partial f}{\partial x_1} =  \alpha^2\frac{\partial f}{\partial x_2}$ and that $\frac{\partial^2 f}{\partial x_1\partial x_2} = 0$.  Therefore $\Delta_{12}(f) = \alpha^2 (\frac{\partial f}{\partial x_2})^2$
and so is a square.
 
If $v_1$ and $v_2$ are linearly independent, 
then there is an invertible matrix $U$ with $Uv_1 = e_1$ and 
$Uv_2 = e_2$. 
Then 
\[
\det(U)^{2}f = 
\lambda\det\left(
U\left(\sum_{i=1}^n x_iv_iv_i^T + W\right)U^{T}\right)
= \lambda\det\left(
{\rm diag}(x_1,x_2, {\bf 0}) + 
\sum_{i=3}^n x_i\widetilde{v_i}\widetilde{v_i}^T  + \widetilde{W}\right).
\]
where $\widetilde{v}_i = Uv_i$ and $\widetilde{W} = UWU^{T}$.
These matrices are still symmetric and so by 
equation \eqref{eq:dodgson}, $\Delta_{12}(f)$ is a square. 

($\Leftarrow$)
Let $d = \deg(f)$. 
We can assume, without loss of generality, that the coefficient  of $x_1\cdots x_d$ in $f$ is nonzero. 
Moreover since the set of polynomials of the form  \eqref{eq:det} is invariant under scaling, we can assume
that this coefficient equals one. 
Let $\overline{f} \in \F[{\bf x}, y]$ denote the homogenezation of $f$ to total degree $d$. 
By Theorem~\ref{thm:DeltaDetRep}, 
there are matrices $M_{d+1}, \hdots, M_{n+1}$ in ${\rm Sym}_d(\F)$ 
so that
\[\overline{f} = \det\left({\rm diag}(x_1, \hdots, x_d) + \sum_{i=d+1}^nx_iM_i + yM_{n+1}\right).\]  
We take $W = M_{n+1}$. 
It remains to show that for each $i={d+1}, \hdots, n$, the matrix 
$M_i$ has the form $v_iv_i^T$ for some $v_i\in \F^d$. 
Without loss of generality we do this for $i=d+1$. 
Let $g$ denote the specialization of $\overline{f}$
to $x_{k}=0$ for $k=d+2, \hdots, n$ and $y=0$. 
Note that \[g = \sum_{S\subseteq [d]}(M_{d+1})_S(x_{d+1})^{|S|}\prod_{j\in [d]\backslash S}x_j.\]  
However $f$ has degree $\leq 1$ in $x_{d+1}$, and thus so does $g$. 
Therefore by Lemma~\ref{lem:rank}, $M_{d+1}$ has rank $\leq$ one. 
To examine its diagonal entries $(M_{d+1})_i$, 
note that for every $i=1, \hdots, d$, $\Delta_{i(d+1)}g$ is a square. Moreover, the restriction of $g$ to $x_i = x_{d+1}=0$ is 
identically zero, showing that  
\begin{align*} \Delta_{i(d+1)}g &= g_i^{d+1}g_{d+1}^{i}-g^{i(d+1)}g_{i(d+1)}= g_i^{d+1}g_{d+1}^{i}\\
&= 
\biggl((M_{d+1})_{\emptyset}\prod_{j\in [d]\backslash i}x_j\biggl)\biggl((M_{d+1})_{i}\prod_{j\in [d]\backslash i}x_j\biggl)
 = (M_{d+1})_i\left(\frac{x_1\cdots x_d}{x_i} \right)^2,\end{align*}
where we use the notation $g_{j} = \frac{\partial g}{\partial x_j}$ 
and $g^{j} = g|_{x_j=0}$. 
It follows that $(M_{d+1})_i$ is a square in $\F$. 
Since $M_{d+1}$ has rank $\leq$ one, 
then for some choice of square root $v_i = \sqrt{(M_{d+1})_i}\in \F$, the matrix $M_{d+1}$ equals $vv^T$ with $v = (v_1, \hdots, v_d)^T$. 
\end{proof}

Using Corollary~\ref{cor:SquareDelInvariance}, this immediately gives the invariance of the set of determinantal polynomials.

\begin{cor}
The set of polynomials in $\F[{\bf x}]_{\leq {\bf 1}}$ with a determinantal representation \eqref{eq:det} is 
invariant under the action of ${\rm SL}_2(\F)^n \rtimes S_n$. 
\end{cor}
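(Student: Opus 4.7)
The plan is to deduce this as an immediate consequence of combining two results already established in the paper: the algebraic characterization of determinantal polynomials in Theorem~\ref{thm:GenDet} and the invariance statement in Corollary~\ref{cor:SquareDelInvariance}.

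First, I would use Theorem~\ref{thm:GenDet} to reformulate the determinantal property. A polynomial $f \in \F[{\bf x}]_{\leq {\bf 1}}$ has a representation of the form \eqref{eq:det} if and only if the Rayleigh difference $\Delta_{ij}(f)$ is a square in $\F[{\bf x}]$ for every pair $i,j\in [n]$. This converts a geometric/linear-algebraic condition about symmetric matrices into a purely polynomial condition on $f$.

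Next, I would invoke Corollary~\ref{cor:SquareDelInvariance}, which states precisely that the subset of $\F[{\bf x}]_{\leq {\bf 1}}$ consisting of polynomials all of whose Rayleigh differences $\Delta_{ij}$ are squares is stable under the action of $\SL_2(\F)^n \rtimes S_n$. Concretely, this follows from Proposition~\ref{prop:SL2onDelta}, which tells us how $\SL_2(\F)^n$ transforms Rayleigh differences (either leaving them invariant or acting on them coordinate-wise), together with the observation that the set of squares in $\F[{\bf x}]_{\leq {\bf 2}}$ is preserved by the group.

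Combining these two ingredients: given $f$ determinantal and $\gamma \in \SL_2(\F)^n \rtimes S_n$, Theorem~\ref{thm:GenDet} shows each $\Delta_{ij}(f)$ is a square, Corollary~\ref{cor:SquareDelInvariance} then shows each $\Delta_{ij}(\gamma \cdot f)$ is a square, and applying Theorem~\ref{thm:GenDet} in the reverse direction yields that $\gamma \cdot f$ is determinantal. There is no real obstacle here — all the technical work has already been done in proving Theorem~\ref{thm:GenDet} and Corollary~\ref{cor:SquareDelInvariance}, so the corollary reduces to a one-line chain of implications.
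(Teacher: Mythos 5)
Your proposal is correct and matches the paper's own argument exactly: the paper also deduces the corollary immediately from Theorem~\ref{thm:GenDet} (determinantal $\Leftrightarrow$ all $\Delta_{ij}$ are squares) combined with the invariance of that square condition from Corollary~\ref{cor:SquareDelInvariance}. Nothing further is needed.
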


Together, Theorems~\ref{thm:square}~and~\ref{thm:GenDet} characterize the set of determinantal polynomials in $\F[{\bf x}]$. 

\begin{cor} \label{cor:MA_detrep}
A polynomial $f = \sum_{S\subseteq [n]}a_S{\bf x}^{[n]\backslash S} \in \F[{\bf x}]_{\leq {\bf 1}}$ 
has a determinantal representation \eqref{eq:det}
if and only if 
\begin{itemize}
\item[(i)] for every $i,j\in [n]$ and  $S\subseteq [n]\backslash \{i,j\}$, $a_{S\cup i}a_{S\cup j} - a_Sa_{S\cup ij}$ is a square over $\F$, and  
\item[(ii)] for every $\gamma\in \SL_2(\F)^n \rtimes S_n$, $(\gamma\cdot{\rm HypDet})({\bf a}) =0.$
\end{itemize}
\end{cor}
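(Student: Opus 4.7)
The plan is to combine Theorem~\ref{thm:GenDet} with Theorem~\ref{thm:square}. By Theorem~\ref{thm:GenDet}, $f$ has a determinantal representation of the form~\eqref{eq:det} if and only if $\Delta_{ij}(f)$ is a square in $\F[\mathbf{x}]$ for every pair $i,j\in[n]$. Since $f$ is multiaffine, each Rayleigh difference lies in $\F[x_k : k\ne i,j]_{\leq\mathbf{2}}$, so Theorem~\ref{thm:square} (applied over $\F$ when $|\F|\ne 3$, otherwise passed through $\overline{\F}$ with (i) serving for descent) translates ``$\Delta_{ij}(f)$ is a square'' into two conditions: (a) every pure-square coefficient of $\Delta_{ij}(f)$ is a square in $\F$, and (b) the orbit of a single quadratic equation under $\SL_2(\F)^{n-2}\rtimes S_{n-2}$ (acting on the free variables of $\Delta_{ij}(f)$) vanishes.

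The first step is to match the coefficients in (a) with the expressions in condition (i). A direct expansion, essentially Dodgson's identity~\eqref{eq:dodgson} applied symbolically, yields for every $T\subseteq [n]\setminus\{i,j\}$ with $S = [n]\setminus(T\cup\{i,j\})$,
\[
\operatorname{coeff}\!\Bigl(\Delta_{ij}(f),\ \prod_{k\in T} x_k^{2}\Bigr) \ =\ a_{S\cup i}\,a_{S\cup j} - a_S\,a_{S\cup ij}.
\]
As $(i,j,T)$ range over admissible triples, these recover precisely the Hadamard--Fischer-type expressions required in (i).

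The second step is to match the orbit condition in (b) with condition (ii). The explicit $n=3$ computation in the introduction gives $\mathrm{Discr}_{x_k}(\Delta_{ij}(f_{\mathbf{a}}))=\mathrm{HypDet}(\mathbf{a})$ when $\{i,j,k\}=\{1,2,3\}$. For general $n$, the proof of Theorem~\ref{thm:square} reduces (b) to the vanishing of $\mathrm{Discr}_{(x_k,y_k)}\bigl(\Delta_{ij}(f)^{\mathbf{2}-\mathrm{hom}}\bigr)$ at a finite grid of points in $(\P^1(\F))^{n-3}$. Each such evaluation can be rewritten as $(\gamma\cdot\mathrm{HypDet})(\mathbf{a})$ for an appropriate $\gamma\in\SL_2(\F)^n\rtimes S_n$: one uses the unit-upper-triangular $\SL_2(\F)$-elements $\varphi(p)$ from the proof of Theorem~\ref{thm:square} to implement the evaluations, together with a permutation sending $\{i,j,k\}$ to $\{1,2,3\}$. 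Corollary~\ref{cor:SquareDelInvariance} then guarantees that the full set of squareness conditions is invariant under $\SL_2(\F)^n\rtimes S_n$, so (ii) is both necessary and sufficient.

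The principal obstacle is the bookkeeping in this second step: translating the natural $\SL_2(\F)^{n-2}\rtimes S_{n-2}$-action on the free variables of each $\Delta_{ij}(f)$ into the $\SL_2(\F)^n\rtimes S_n$-action on $\mathbf{a}$, and verifying that every orbit element of $\mathrm{HypDet}$ needed arises this way (and no more). Otherwise the argument closely parallels Theorem~\ref{thm:HypDet2}; the only structural change is that $a_\emptyset$ is no longer normalized to $1$, which is harmless since the case $S=\emptyset$ of condition (i) covers the corresponding squares.
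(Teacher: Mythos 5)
Your argument follows the paper's own proof essentially step for step: Theorem~\ref{thm:GenDet} reduces the determinantal representation to squareness of all $\Delta_{ij}(f)$, Theorem~\ref{thm:square} converts squareness into the pure-square-coefficient conditions plus the orbit of the discriminant equation, and the coefficient identity ${\rm coeff}\bigl(\Delta_{ij}(f),{\bf x}^{2\beta}\bigr)=a_{S\cup i}a_{S\cup j}-a_Sa_{S\cup ij}$ (which you state correctly, and which the paper verifies by the same expansion of $f_if_j-ff_{ij}$) identifies those coefficients with condition (i). The translation of the orbit condition into (ii) via the identity ${\rm HypDet}=\mathrm{Discr}_{x_k}(\Delta_{ij})$ evaluated at points of $\P^1(\F)$, implemented by the translation matrices and permutations and made legitimate by Proposition~\ref{prop:SL2onDelta} and Corollary~\ref{cor:SquareDelInvariance}, is exactly the bookkeeping the paper also leaves at this level of detail (compare the proof of Theorem~\ref{thm:HypDet2} and Remark~\ref{rem:count}), so no complaint there.

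The one claim I would strike is the parenthetical that the case $|\F|=3$ can be recovered by ``passing through $\overline{\F}$ with (i) serving for descent.'' That does not work: hypothesis (ii) only quantifies over $\gamma\in\SL_2(\F_3)^n\rtimes S_n$, so it only forces the relevant discriminants, which have degree up to $4$ in each variable, to vanish at the four points of $\P^1(\F_3)$ in each coordinate; this is too few points for Lemma~\ref{lem:evalZero}, and Example~\ref{ex:F3weird} exhibits exactly such a nonzero discriminant vanishing on all of $(\P^1(\F_3))^2$. Enlarging the field to $\overline{\F_3}$ does not enlarge the set of group elements supplied by (ii), so the sufficiency argument genuinely breaks there --- this is precisely why Theorem~\ref{thm:HypDet2} excludes $|R|=3$ and why the paper leaves the $\F_3$ case as an open question. (The paper's own statement and proof of this corollary tacitly inherit the restriction $|\F|\neq 3$ from Theorem~\ref{thm:square}.) With that parenthetical removed, and the scope restricted accordingly, your proof matches the paper's.
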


\begin{proof}
By Theorem~\ref{thm:GenDet}, $f$ has a determinantal representation \eqref{eq:det} if and only if for all $i,j$ 
$\Delta_{ij}f$ is a square in $\F[{\bf x}]$. 
Since $\Delta_{ij}f$ has degree $\leq 2$ in each variable, 
Theorem~\ref{thm:square}  implies that 
$\Delta_{ij}f =  \sum_{\alpha \in \{0,1,2\}^n}c_{\alpha}{\bf x}^{\alpha}$ 
is a square if and only if for every $\beta\in \{0,1\}^n$, $c_{2\beta}$ is a square in $R$ 
and $ {\bf c} = (c_{\alpha})_{\alpha \in \{0,1,2\}^n}$ satisfy 
the images of 
\begin{equation}\label{eq:squareC}
	(c_{(1,{\bf 0})})^2-4c_{(0,{\bf 0})} c_{(2,{\bf 0})}=0
\end{equation}
under the action $\SL_2(\F) \rtimes S_n$. This is in turn equivalent to the condition that for every $\gamma$ in $\SL_2(\F)^n \rtimes S_n$, $(\gamma\cdot{\rm HypDet})({\bf a}) =0$ and for every $i,j\in [n]$ and  $S\subseteq [n]\backslash \{i,j\}$, $a_{S\cup i}a_{S\cup j} - a_Sa_{S\cup ij} $ is a square in $\F$.

To see this, consider $S\subseteq [n]\backslash \{i,j\}$ and let $\beta \in \{0,1\}^n$ denote the indicator vector of $[n]\backslash (S\cup ij)$. 
We claim that that the coefficient of ${\bf x}^{2\beta}$ 
in $\Delta_{ij}(f)$ equals $a_{S\cup i}a_{S\cup j} - a_Sa_{S\cup ij} $. 
Since $f$, $\frac{\partial f}{\partial x_i}$, $\frac{\partial f}{\partial x_j}$,  and $\frac{\partial^2f}{\partial x_i \partial x_j}$ 
have degree $\leq 1$ in each variable, only the ${\bf x}^{\beta}$ terms in each of these polynomials 
contribute to the ${\bf x}^{2\beta}$ term in 
$\Delta_{ij}(f) = \frac{\partial f}{\partial x_i}\frac{\partial f}{\partial x_j} - f \frac{\partial^2f}{\partial x_i \partial x_j}$. 
That is,
\[
{\rm coeff}\left(\Delta_{ij}(f), {\bf x}^{2\beta}\right)= 
{\rm coeff}\left(\frac{\partial f}{\partial x_i}, {\bf x}^{\beta}\right)\cdot {\rm coeff}\left(\frac{\partial f}{\partial x_j}, {\bf x}^{\beta}\right)
- 
{\rm coeff}\left(f, {\bf x}^{\beta}\right)\cdot {\rm coeff}\left(\frac{\partial^2 f}{\partial x_i\partial x_j}, {\bf x}^{\beta}\right).
\]
Note that the coefficient of ${\bf x}^{\beta}$ in $f$ is $a_{S\cup ij}$. 
The coefficient of ${\bf x}^{\beta}$ in $\frac{\partial f}{\partial x_i}$ equals the coefficient of $x_i \cdot {\bf x}^{\beta}$ in $f$, 
which is $a_{S\cup j}$. Similarly, the coefficients of ${\bf x}^{\beta}$ in  $\frac{\partial f}{\partial x_j}$ and 
$\frac{\partial^2 f}{\partial x_i\partial x_j}$ are $a_{S\cup i}$ and $a_S$, respectively. 
\end{proof}

\subsection{Connections with the Grassmannian}
Given a $d\times n$ matrix $V$ of full-rank $d$, consider the 
polynomial $f$ from \eqref{eq:det} with $W = 0$: 
\begin{equation}\label{eq:CauchyBinet}
f({\bf x}) =\lambda  \det\left( V {\rm diag}(x_1, \hdots, x_n) V^T \right) = \lambda \det\left(\sum_{i=1}^n x_i v_iv_i^T\right)
= \lambda \sum_{S \in \binom{[n]}{d}} \bigl(V_S\bigl)^2 {\bf x}^S. 
\end{equation}
Here $\binom{[n]}{d}$ denotes the collection of size-$d$ subsets of $[n]$ and $V_S$ denotes the $d\times d$ minor of $V$ obtained by taking columns indexed by $S$. 
If $V$ has full rank $d$, the coefficients of $f$ are the squares of the Pl\"ucker coordinates 
given by the Pl\"ucker embedding of the rowspan of $V$ into $\Gr_{\F}(d,n)$. Otherwise $f$ is identically zero. 

Formally, consider the Pl\"ucker embedding of $\Gr_{\F}(d,n)$ into $\P^{\binom{n}{d}-1}(\F)$. 
Given a subspace $L\subseteq \F^n$ of dimension $d$, its image in $\P^{\binom{n}{d}-1}(\F)$
is the length-$\binom{n}{d}$ vector of $d\times d$ minors of any $d\times n$ matrix $V$ whose rowspan 
equals $L$. 
The map $[p_S]_S \mapsto [(p_S)^2]_S$ defines a morphism $\P^{\binom{n}{d}-1}(\F) \to \P^{\binom{n}{d}-1}(\F)$. Let  $\Gr_{\F}^2(d,n)$ denote the image of $\Gr_{\F}(d,n)$ under this morphism. 
Corollary~\ref{cor:MA_detrep} then gives an immediate characterization of $\Gr_{\F}^2(d,n)$
via the hyperdeterminantal equations in $\F^{2^n}$. In fact, 
setting $x_n=1$ in \eqref{eq:CauchyBinet}, 
we can study this image via 
multiaffine determinantal representations in the variables $x_1, \hdots, x_{n-1}$ 
and use the hyperdeterminantal equations in $\F^{2^{n-1}}$.

\begin{cor} 
Let ${\bf q} = (q_S)_{S\in \binom{[n]}{d}}\in \P^{\binom{n}{d}-1}(\F)$ and let 
${\bf a}\in \P^{2^{n-1}-1}(\F)$ denote the vector 
given by 
\[
a_S = \begin{cases} 
q_S & \text{ if } S\subseteq[n-1], |S|=d\\
q_{S\cup n} &  \text{ if } S\subseteq[n-1], |S|=d-1\\
0 & \text{ otherwise }. 
\end{cases}
\]
The vector ${\bf q}$ belongs to $\Gr_{\F}^2(d,n)$ if and only if  
\begin{itemize}
\item[(i)] for all $i,j\in [n]$ and $S\subseteq [n]\backslash \{i,j\}$, $|S|=d-1$, 
$q_{S\cup i}q_{S\cup j}$ is a square over $\F$, and  
\item[(ii)] for every $\gamma\in \SL_2(\F)^{n-1} \rtimes S_{n-1}$, $(\gamma\cdot{\rm HypDet})({\bf a}) =0.$
\end{itemize}
\end{cor}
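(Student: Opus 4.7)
The plan is to reduce to an application of Corollary~\ref{cor:MA_detrep} by working with the homogeneous multiaffine polynomial $g(\mathbf{x}) = \sum_{|S|=d}q_S\,\mathbf{x}^S \in \F[x_1,\ldots,x_n]_{\leq \mathbf{1}}$. By \eqref{eq:CauchyBinet}, $\mathbf{q}\in \Gr^2_\F(d,n)$ if and only if $g = \lambda\det\bigl(\sum_{i=1}^n x_iv_iv_i^T\bigr)$ for some $\lambda\in\F^\times$ and $v_1,\ldots,v_n\in\F^d$, i.e.\ $g$ admits a representation of the form \eqref{eq:det} with $W=0$.

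The first observation is that because $g$ is homogeneous of degree $d$, any representation \eqref{eq:det} of $g$ automatically has $W=0$ in the sense that the representation can be taken with $W=0$: the degree-$d$ part of $\lambda\det\bigl(\sum_i x_iv_iv_i^T+W\bigr)$, obtained by retaining only the linear part of each entry in the Leibniz expansion, is exactly $\lambda\det\bigl(\sum_i x_iv_iv_i^T\bigr)$, and this must coincide with $g$. Hence $\mathbf{q}\in\Gr^2_\F(d,n)$ if and only if $g$ has \emph{some} determinantal representation \eqref{eq:det}. By Corollary~\ref{cor:MA_detrep} this amounts to two conditions on the coefficient vector $\mathbf{b}=(b_T)_{T\subseteq[n]}$ of $g$, where $b_T=q_{[n]\setminus T}$ for $|T|=n-d$ and $b_T=0$ otherwise.

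The square condition in Corollary~\ref{cor:MA_detrep}(i) simplifies under this sparsity: for any $i,j$ and $S\subseteq[n]\setminus\{i,j\}$, the expression $b_{S\cup i}b_{S\cup j} - b_S b_{S\cup ij}$ vanishes unless $|S|=n-d-1$, in which case $b_S=b_{S\cup ij}=0$ and the condition reduces to $b_{S\cup i}b_{S\cup j}$ being a square. Setting $T=[n]\setminus(S\cup\{i,j\})$, of size $d-1$, this rewrites as ``$q_{T\cup i}q_{T\cup j}$ is a square'', recovering condition (i) exactly.

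It remains to match the hyperdeterminantal condition $(\gamma\cdot\mathrm{HypDet})(\mathbf{b})=0$ for all $\gamma\in \SL_2(\F)^n\rtimes S_n$ with condition (ii) on $\mathbf{a}\in\F^{2^{n-1}}$ under the smaller group $\SL_2(\F)^{n-1}\rtimes S_{n-1}$. The bridge is the identity $f_\mathbf{a}(\mathbf{x}) = x_1\cdots x_{n-1}\cdot g(1/x_1,\ldots,1/x_{n-1},1)$, which exhibits $f_\mathbf{a}$ as arising from $g$ by specializing $x_n\mapsto 1$ and applying the ``reversal'' elements $\tiny{\begin{pmatrix}0&-1\\1&0\end{pmatrix}}\in \SL_2(\F)$ in each of the remaining coordinates, so $f_\mathbf{a}$ lies in the $\SL_2(\F)^n\rtimes S_n$-orbit of $g$. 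Using Remark~\ref{rem:count}, both condition systems reduce to finitely many evaluations of the form $\mathrm{Discr}_{x_k}(\Delta_{ij}(\,\cdot\,))|_{\mathbf{x}=\mathbf{p}}=0$; the extra $\SL_2(\F)$-action on the $n$-th coordinate of $g$ in the $\mathbf{b}$-formulation precisely parameterizes the missing evaluation slot for $f_\mathbf{a}$, so the two systems cut out the same zero locus. The main obstacle will be carrying out this orbit-matching bookkeeping: verifying that condition (ii) on $\mathbf{a}\in\F^{2^{n-1}}$ under $\SL_2(\F)^{n-1}\rtimes S_{n-1}$ already exhausts the full $\SL_2(\F)^n\rtimes S_n$-orbit acting on $\mathbf{b}\in\F^{2^n}$, which is made possible by the sparsity of $\mathbf{b}$ and the invariance of the square-Rayleigh-difference property (Proposition~\ref{prop:SL2onDelta} and Corollary~\ref{cor:SquareDelInvariance}).
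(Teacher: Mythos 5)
Your skeleton (pass to the homogeneous polynomial $g=\sum_{|S|=d}q_S\mathbf{x}^S$, apply Corollary~\ref{cor:MA_detrep} to its coefficient vector $\mathbf{b}$, use sparsity of $\mathbf{b}$ to reduce the square conditions to (i), and drop $W$ by a degree/homogeneity argument) is the paper's skeleton, and your sparsity computation, your reversal identity $f_{\mathbf a}(\mathbf x)=x_1\cdots x_{n-1}\,g(1/x_1,\dots,1/x_{n-1},1)$, and your $W$-removal are essentially correct (one slip: the degree-$d$ part of $\lambda\det\bigl(\sum_i x_iv_iv_i^T+W\bigr)$ equals $\lambda\det\bigl(\sum_i x_iv_iv_i^T\bigr)$ only when the representation has size $m=d$, which you should extract from the ``moreover'' clause of Theorem~\ref{thm:GenDet}; for $m>d$ the degree-$d$ part involves $W$).

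The genuine gap is exactly the step you defer as ``orbit-matching bookkeeping,'' and the mechanism you propose for it would fail. Via Remark~\ref{rem:count}, condition (ii) on $\mathbf a$ amounts to ${\rm Discr}_{x_k}(\Delta_{ij}(f_{\mathbf a}))\equiv 0$ for triples $\{i,j,k\}\subseteq[n-1]$, while the hyperdeterminantal system for $\mathbf b$ also demands ${\rm Discr}_{x_n}(\Delta_{ij}(g))\equiv 0$, i.e.\ discriminants with respect to the \emph{new} homogenizing variable $x_n$ (triples containing $n$). These are not ``missing evaluation slots'' of the $(n-1)$-variable discriminants, and no reindexing by an extra $\SL_2$ factor produces them: for a general multiaffine polynomial in $x_1,\dots,x_{n-1}$, vanishing of all its triple-discriminants says nothing about the discriminant of its homogenization with respect to the homogenizing variable. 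What makes the implication true here is genuine squareness, and this is how the paper argues: first apply Corollary~\ref{cor:MA_detrep} (equivalently Theorem~\ref{thm:GenDet}) to $f_{\mathbf a}$ itself, using \emph{both} (i) and (ii), to conclude that every $\Delta_{ij}(f_{\mathbf a})$ is an honest square; then, since $g$ is (up to your reversal) the homogenization of $f_{\mathbf a}$, each $\Delta_{ij}(g)$ with $i,j\le n-1$ is a square, so ${\rm Discr}_{x_n}(\Delta_{ij}(g))\equiv 0$; finally the identity ${\rm Discr}_{x_n}(\Delta_{ij}(g))={\rm Discr}_{x_j}(\Delta_{in}(g))$ converts these into the conditions for triples containing $n$, after which Corollary~\ref{cor:MA_detrep} can be applied to $g$ as you intend. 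Your plan only ever applies Corollary~\ref{cor:MA_detrep} to $g$, never to $f_{\mathbf a}$; without that intermediate application (or an equivalent passage to squareness, e.g.\ over the algebraic closure via Theorem~\ref{thm:square}), the transfer from (ii) on $\mathbf a$ to the full system on $\mathbf b$ is unsubstantiated, and the quoted symmetric-discriminant identity, which you never invoke, is also needed.
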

\begin{proof} ($\Rightarrow$) This follows from applying Corollary~\ref{cor:MA_detrep} to \eqref{eq:CauchyBinet}.

($\Leftarrow$) 
Let $f = \sum_{S\in \binom{[n]}{d}} q_S {\bf x}^S$. 
This equals $f = f_{\bf b}$ where ${\bf b}\in \F^{2^n}$ 
is given by $b_S = q_S$ for $|S|=d$ and $b_S = 0$ otherwise. 
If ${\bf a} = (a_S)_S$ 
is defined as above, then $f_{\bf a}$ equals the restriction of $f$ to $x_n=1$
and $f$ is the homogenezation of $f_{\bf a}$ to degree $d$ with homogenizing variable $x_n$.

For any $i\neq j\in [n-1]$ and $S\subseteq [n-1]\backslash\{i,j\}$, 
the sizes of $S$ and $S\cup\{ i, j\}$ differ by two, implying that $a_{S}a_{S\cup ij}$ equals zero. 
Then $a_{S\cup i}a_{S\cup j} - a_{S}a_{S\cup ij} = a_{S\cup i}a_{S\cup j} $ is a square in $\F$ by assumption (i). 
By Theorem~\ref{thm:GenDet} and Corollary~\ref{cor:MA_detrep}, 
$\Delta_{ij}(f_{\bf a})$ is a square in $\F[x_1, \hdots, x_{n-1}]$ for all $i,j \in [n-1]$, implying that 
$\Delta_{ij}(f)$ is a square in $\F[{\bf x}]$ for all $i,j \in [n-1]$. In particular, 
${\rm Discr}_{x_n}(\Delta_{ij}(f))$ is identically zero. 
One can check that for any $i,j,n$, ${\rm Discr}_{x_n}(\Delta_{ij}(f))$ equals 
${\rm Discr}_{x_j}(\Delta_{in}(f))$. This shows that 
$(\gamma\cdot{\rm HypDet})({\bf b}) =0$ for all $\gamma\in \SL_2(\F)^{n} \rtimes S_{n}$.
Assumption (i) implies that $b_{S\cup i}b_{S\cup j} - b_Sb_{S\cup ij}$ is a square in $\F$ 
for all $i,j\in [n]$ and $S\subseteq[n]\backslash\{i,j\}$, since this is either zero or of the form 
$q_{S\cup i}q_{S\cup j}$.
Corollary~\ref{cor:MA_detrep} then gives a representation 
$f = \lambda\det(\sum_{i=1}^n x_iv_iv_i^T + W)$ where $v_i\in \F^d$ and $W\in {\rm Sym}_d(\F)$.  
The polynomial $\lambda\det(\sum_{i=1}^n x_iv_iv_i^T  + yW)\in \F[\x, y]$
equals the homogenization of $f$ to degree $d$. Since $f$ is already homogeneous of degree $d$, 
this equals $f$ and belongs to $\F[\x]$. 
Specializing to $y=0$ gives the desired representation $f = \lambda\det(\sum_{i=1}^n x_iv_iv_i^T)$.
\end{proof}

\begin{example}($d=2$, $n=4$)
The Grassmannian $\Gr_\F(2,4)$ is cut out by one Pl\"ucker relation 
$p_{12}p_{34} - p_{13}p_{24}  + p_{14}p_{23}=0$ in $\P^5(\F)$. 
Taking $q_{ij} = p_{ij}^2$ and eliminating the variables $p_{ij}$ gives the 
defining equation 
\[
q_{12}^2q_{34}^2 + q_{13}^2q_{24}^2 +q_{14}^2q_{23}^2
-2q_{12}q_{13}q_{24}q_{34} - 2 q_{12}q_{14}q_{23}q_{34}
- 2 q_{13}q_{14}q_{23}q_{24}=0
\]
for  $\Gr^2_\F(2,4)$. This is exactly the hyperdeterminant ${\rm HypDet}({\bf a})$ 
where ${\bf a} = (a_{S})_{S\subseteq [3]} \in \F^{2^3}$ is given by 
 $a_{\emptyset} = a_{123}=0$, $a_{i} = q_{i4}$ and $a_{ij} = q_{ij}$ 
for all $i,j\in [3]$. 
\end{example}

\subsection{Determinantal representations in higher degrees}
For any $r\in \Z_+$, let ${\rm Sym}_n^r(\F)$ denote the set of symmetric matrices 
over $\F$ that can be written as a sum of $r$ rank-one matrices over $\F$, i.e.  
\[
{\rm Sym}_n^r(\F) = \left\{ \sum_{i=1}^r v_iv_i^T :  \ v_1, \hdots, v_r \in \F^n\right\}. 
\]
If $\F$ is algebraically closed with ${\rm char}(\F)\neq 2$, this is just the set of matrices 
of rank $\leq r$.  For $\F=\R$ this is the set of positive semidefinite 
matrices of rank $\leq r$.

\begin{thm}\label{thm:DetInvariance}
The set of polynomials $\F[{\bf x}]_{\leq {\bf d}}$ 
with a determinantal representation 
\begin{equation}\label{eq:higherDegDetRep}
f = \lambda \det\left(\sum_{i=1}^n x_iA_i + B\right) 
\text{ with $A_i \in {\rm Sym}_m^{d_i}(\F)$  and $B \in  {\rm Sym}_m(\F)$} 
\end{equation}
for some $m\in \N$ is invariant under the action of ${\rm SL}_2(\F)^n \rtimes S_n$. 
\end{thm}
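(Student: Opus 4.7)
The plan is to reduce the claim to the multiaffine (${\bf d} = {\bf 1}$) case handled by Corollary~\ref{cor:MA_detrep}, via a ``variable expansion'' trick: each $A_i$ of rank at most $d_i$ is decomposed into $d_i$ rank-one symmetric summands, each of which gets its own fresh variable, producing an auxiliary multiaffine determinantal polynomial $F$ whose diagonal specialization is $f$.

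Invariance under $S_n$ is immediate: for $\pi\in S_n$, the representation $f = \lambda\det(\sum_i x_i A_i + B)$ becomes $\pi\cdot f = \lambda\det(\sum_i x_i A_{\pi^{-1}(i)} + B)$, which is again of the form \eqref{eq:higherDegDetRep} with $A_{\pi^{-1}(i)}\in {\rm Sym}_m^{d_{\pi^{-1}(i)}}(\F)$. So the real content is invariance under $\SL_2(\F)^n$. Fix $\gamma = (\gamma_1,\dots,\gamma_n)\in \SL_2(\F)^n$ and write $A_i = \sum_{j=1}^{d_i} v_{i,j}v_{i,j}^T$. Let $N = d_1 + \cdots + d_n$, introduce fresh variables ${\bf x}' = \{x_i^{(j)} : i\in [n], j\in[d_i]\}$, and define
\[
F({\bf x}') \ = \ \lambda\det\left(B + \sum_{i=1}^n\sum_{j=1}^{d_i} x_i^{(j)}\, v_{i,j}v_{i,j}^T\right) \ \in \ \F[{\bf x}']_{\leq {\bf 1}}.
\]
Then $F$ has a multiaffine determinantal representation of the form \eqref{eq:det}, and the diagonal specialization $x_i^{(j)}\mapsto x_i$ (for all $j$) recovers $f$.

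Next I would lift $\gamma$ to an element $\widetilde\gamma\in \SL_2(\F)^N$ that applies $\gamma_i$ to every variable $x_i^{(j)}$ for $j\in[d_i]$. A short direct computation using the definition of the $\SL_2(\F)^n$-action on $\F[{\bf x}]_{\leq {\bf d}}$ establishes the key intertwining identity
\[
(\widetilde\gamma\cdot F)\big|_{x_i^{(j)} = x_i\ \forall i,j} \ = \ \gamma\cdot f,
\]
which holds because the scaling factor $\prod_{i,j}(c_i x_i + d_i)$ from the lifted action collapses under the diagonal specialization to $\prod_i(c_i x_i + d_i)^{d_i}$, matching exactly the scaling factor in the original action. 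By Corollary~\ref{cor:MA_detrep}, $\widetilde\gamma\cdot F$ again admits a multiaffine determinantal representation $\lambda'\det(W' + \sum_{i,j} x_i^{(j)} w_{i,j}w_{i,j}^T)$ for some $\lambda'\in \F$, $w_{i,j}\in \F^{m'}$, and $W'\in {\rm Sym}_{m'}(\F)$. Specializing $x_i^{(j)}\mapsto x_i$ and collecting like powers of each $x_i$ yields
\[
\gamma\cdot f \ = \ \lambda'\det\left(W' + \sum_{i=1}^n x_i\, A_i'\right), \qquad A_i' \ := \ \sum_{j=1}^{d_i} w_{i,j}w_{i,j}^T\ \in\ {\rm Sym}_{m'}^{d_i}(\F),
\]
exhibiting $\gamma\cdot f$ in the required form \eqref{eq:higherDegDetRep}.

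The main obstacle is verifying the intertwining identity between the lifted action on $F$ and the original action on $f$; this is essentially the heart of the argument, though it is a routine unwinding of the definition of the $\SL_2(\F)^n$-action. A secondary subtlety is that Corollary~\ref{cor:MA_detrep} produces only \emph{some} multiaffine representation of $\widetilde\gamma\cdot F$, not necessarily one that respects the original partition of ${\bf x}'$ into the blocks $\{x_i^{(1)},\dots,x_i^{(d_i)}\}$; however the block structure reappears automatically after diagonal specialization, so the argument goes through.
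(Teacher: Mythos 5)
Your proposal is correct and follows essentially the same route as the paper: decompose each $A_i$ into $d_i$ rank-one summands with fresh variables, lift $\gamma$ diagonally to $\SL_2(\F)^{d_1+\cdots+d_n}$, use the intertwining identity under the diagonal specialization, invoke invariance in the multiaffine case, and regroup the rank-one matrices. The only cosmetic difference is that the paper handles the ${\bf d}={\bf 1}$ step via Theorem~\ref{thm:GenDet} together with Corollary~\ref{cor:SquareDelInvariance} (rather than Corollary~\ref{cor:MA_detrep}), which is the cleanest way to justify the invariance you implicitly use there.
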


\begin{proof}
The invariance under the action of $S_n$ is immediate. It remains to 
check the invariance under  ${\rm SL}_2(\F)^n$.
Suppose that $f = \det(\sum_{i=1}^n x_iA_i + B)$, where
$A_i \in {\rm Sym}_m^{d_i}(\F)$ and $B \in  {\rm Sym}_m(\F)$.
First, suppose that ${\bf d}={\bf 1}$ and let $\gamma\in {\rm SL}_2(\F)^n$. By Corollary~\ref{cor:SquareDelInvariance},
$\Delta_{ij}(\gamma\cdot f)$ is a square for all $i,j$. 
Then by Theorem~\ref{thm:GenDet}, $\gamma\cdot f$ has a determinantal representation as in \eqref{eq:det}.

Now consider arbitrary ${\bf d} = (d_1, \hdots, d_n)$. 
By definition, we can write each matrix $A_i$ as a sum of 
$d_i$ matrices $A_{ij}$ each of the form $vv^T$ for some $v\in \F^m$. 
Then consider
\[
F = \det\left(\sum_{i=1}^n \sum_{j=1}^{d_i} y_{ij}A_{ij} + B\right)
\ \in \ \F\bigl[y_{ij} : i\in [n], j\in [d_i]\bigl].
\]
Note that there is an inclusion $\phi:{\rm SL}_2(\F)^n\to {\rm SL}_2(\F)^{d_1+\hdots+ d_n}$, given by $(\phi(\gamma))_{ij} = \gamma_i$ for all $i\in [n]$ and $j\in [d_i]$. By construction, the restriction of $\phi(\gamma) \cdot F$ 
given by $y_{ij} = x_i$ for all $i,j$ gives $\gamma\cdot f$. That is, 
\[
\left(\phi(\gamma) \cdot F\right)|_{y_{ij} = x_{i} } = \gamma\cdot f. 
\]
By the case ${\bf d}={\bf 1}$, $\phi(\gamma) \cdot F$ is determinantal.  That is, there are some matrices $C_{11}, \hdots, C_{nd_n}, D$ with $C_{ij}\in {\rm Sym}_m^1(\F)$  so that 
$\phi(\gamma) \cdot F$ equals $\det( \sum_{i=1}^n \sum_{j=1}^{d_i} y_{ij}C_{ij} + D)$. 
 Then $\gamma\cdot f $ equals $ \det( \sum_{i=1}^n x_{i}C_{i} + D)$
where $C_i = \sum_{j=1}^{d_i}C_{ij}\in {\rm Sym}_m^{d_i}(\F)$. 
\end{proof}

One motivation for studying such polynomials comes from definite determinantal representations over $\R$ 
and their connection with stable polynomials. 
A real polynomial $f\in \R[{\bf x}]$ is \emph{stable} if it has no zeros with strictly positive imaginary parts, i.e. 
$f({\bf z}) \neq 0$ for all ${\bf z}\in \C^n$ with ${\rm Im}({\bf z})\in \R_+^n$. 
Equivalently, $f$ is stable if and only if the polynomial $f(t{\bf v} + {\bf w})\in \R[t]$ is real-rooted for 
all ${\bf v}\in \R_+^n$ and ${\bf w}\in \R^n$.  
Over $\F = \R$, any polynomial with a determinantal representation of the form \eqref{eq:higherDegDetRep} is stable (see, e.g.~\cite[Prop.~2.1]{WagnerSurvey}), but not every stable 
polynomial has such a representation (see \cite{BrandenDetRep}). 

The action of ${\rm SL}_2(\R)$ on $\C$ given by $\tiny{\begin{pmatrix} a & b \\ c & d \end{pmatrix}} \cdot z = \frac{az+b}{cz+d}$  preserves the upper half plane $\{z \in \C: {\rm Im}(z)>0\}$ and so the set of stable polynomials 
in $\R[{\bf x}]$ is invariant under the action of ${\rm SL}_2(\R)^n\rtimes S_n$. 
One consequence of Theorem~\ref{thm:DetInvariance} is that the set of polynomials with a semidefinite determinantal representation 
is also invariant under the action of this group. 

\begin{cor}
The set of polynomials in $\R[{\bf x}]_{\leq {\bf d}}$ with a determinantal representation 
\[
f = \lambda \det\left(\sum_{i=1}^n x_iA_i + B\right) 
\text{ with $A_1, \hdots, A_n, B \in {\rm Sym}_m(\R)$  and $A_1, \hdots, A_n\succeq 0$} 
\]
for some $m\in \N$ and $\lambda \in \R$ is invariant under the action of ${\rm SL}_2(\R)^n\rtimes S_n$.
\end{cor}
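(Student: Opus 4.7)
The plan is to reduce this to Theorem~\ref{thm:DetInvariance} via the classical identification of positive semidefinite matrices over $\R$ with the sets ${\rm Sym}_m^r(\R)$. The key observation bridging the two statements is that, over $\R$, one has
\[
{\rm Sym}_m^r(\R) = \{A \in {\rm Sym}_m(\R) : A \succeq 0 \text{ and } {\rm rank}(A) \leq r\}.
\]
One inclusion is immediate since any sum $\sum_{j=1}^r v_jv_j^T$ is PSD of rank $\leq r$. For the other, a PSD matrix $A$ of rank $r$ admits a spectral decomposition $A = U\Lambda U^T$ with $\Lambda$ diagonal and nonnegative with at most $r$ nonzero entries, giving $A = \sum_{j=1}^r v_jv_j^T$.

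Given this, suppose $f \in \R[{\bf x}]_{\leq {\bf d}}$ has a PSD determinantal representation $f = \lambda\det(\sum_i x_iA_i + B)$ with $A_i \succeq 0$. Let $r_i = {\rm rank}(A_i)$. By the identification above, $A_i \in {\rm Sym}_m^{r_i}(\R)$. Moreover, since the degree of $\det(xA + C)$ in $x$ is bounded by ${\rm rank}(A)$ (the coefficient of $x^k$ being a sum of products involving $k\times k$ minors of $A$, which vanish for $k > {\rm rank}(A)$), we obtain $f \in \R[{\bf x}]_{\leq {\bf r}}$ where ${\bf r} = (r_1, \hdots, r_n)$.

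Now apply Theorem~\ref{thm:DetInvariance} with degree vector ${\bf r}$: the set of polynomials in $\R[{\bf x}]_{\leq {\bf r}}$ admitting a representation $\lambda'\det(\sum x_iA_i' + B')$ with $A_i' \in {\rm Sym}_{m'}^{r_i}(\R)$ is invariant under ${\rm SL}_2(\R)^n \rtimes S_n$. Hence for any $\gamma \in {\rm SL}_2(\R)^n \rtimes S_n$, the polynomial $\gamma\cdot f$ has such a representation, and applying the identification ${\rm Sym}_{m'}^{r_i}(\R) \subseteq \{A \succeq 0\}$ yields $A_i' \succeq 0$. Finally, $\gamma \cdot f \in \R[{\bf x}]_{\leq {\bf d}}$ by the definition of the ${\rm SL}_2(\R)^n \rtimes S_n$-action in Section~\ref{sec:background}, which preserves this module. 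Thus $\gamma\cdot f$ admits a PSD determinantal representation.

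There is no real obstacle here; all the substance has been packaged into Theorem~\ref{thm:DetInvariance}, and what remains is the elementary linear-algebra fact identifying ${\rm Sym}_m^r(\R)$ with the rank-$\leq r$ PSD cone. The only minor subtlety is to avoid assuming a priori that the original representation has ranks matching the degrees of $f$, which is handled by applying Theorem~\ref{thm:DetInvariance} at the vector ${\bf r}$ rather than ${\bf d}$.
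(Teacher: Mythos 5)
Your overall strategy---identify ${\rm Sym}_m^r(\R)$ with the cone of positive semidefinite matrices of rank at most $r$ and then quote Theorem~\ref{thm:DetInvariance}---is exactly how the paper intends the corollary to follow. However, the specific way you treat the mismatch between the ranks $r_i={\rm rank}(A_i)$ and the degree bounds ${\bf d}_i$ has a genuine gap: the $\SL_2(\R)^n$-action is not one fixed action on polynomials but depends on the degree vector, since $\gamma\cdot f=\prod_i(c_ix_i+d_i)^{{\bf d}_i}f\bigl(\tfrac{a_1x_1+b_1}{c_1x_1+d_1},\hdots\bigr)$. Applying Theorem~\ref{thm:DetInvariance} at the vector ${\bf r}$ therefore proves invariance for the ${\bf r}$-action, and when ${\bf r}\neq{\bf d}$ the polynomial it produces differs from the one the corollary is about by the factor $\prod_i(c_ix_i+d_i)^{{\bf r}_i-{\bf d}_i}$; in particular it need not lie in $\R[{\bf x}]_{\leq{\bf d}}$ at all. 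Your closing sentence, ``$\gamma\cdot f\in\R[{\bf x}]_{\leq{\bf d}}$ by the definition of the action,'' silently switches from the ${\bf r}$-action (for which you have a representation) to the ${\bf d}$-action (which the statement concerns), and this is precisely where the argument breaks.

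The repair in the main case is simpler than your detour: since zero vectors are allowed in the definition of ${\rm Sym}_m^r(\F)$, one has ${\rm Sym}_m^{r}(\R)\subseteq{\rm Sym}_m^{r'}(\R)$ for $r\leq r'$, so whenever ${\rm rank}(A_i)\leq{\bf d}_i$ each $A_i$ lies in ${\rm Sym}_m^{{\bf d}_i}(\R)$ and Theorem~\ref{thm:DetInvariance} applies directly at ${\bf d}$, with no change of degree vector---this is the paper's one-line reduction. What your argument does not settle is the residual case ${\rm rank}(A_i)>{\bf d}_i$, which can genuinely occur because the degree of $f$ in $x_i$ may drop below ${\rm rank}(A_i)$ (for instance $-x_1=\det\bigl(x_1\,{\rm diag}(1,1,0)+B\bigr)$ for a suitable symmetric $B$, a PSD representation of a degree-one polynomial with a rank-two coefficient matrix). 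There your approach only yields a PSD representation of $\prod_i(c_ix_i+d_i)^{{\bf r}_i-{\bf d}_i}\,(\gamma\cdot f)$, and nothing in the paper licenses dividing such a linear factor off a determinantal representation; to close this case you would need either to show that a representation with ${\rm rank}(A_i)\leq{\bf d}_i$ can always be found, or to supply a separate argument for removing those factors.
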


% put references in file ref.bib (from mathscinet you should be able to get them in bibtex format directly)
%
% references starting "@article {NAME, " in the bibfile can be cited by \cite{NAME} in the tex file
%	
% run bibtex and then compile twice in latex to get references (this generates a .bbl file, which is then imported)

%
%\bibliography{ref}
\bibliographystyle{alpha}

\end{document}